\numberwithin{equation}{section} 
\newcommand{\PEC}{\mathrm{PEC}}
\newcommand{\TR}{\mathrm{T}}
\title{Domain Decomposition Methods based on quasi-optimal transmission operators for the solution of Helmholtz transmission problems}
\author{ Yassine Boubendir\thanks{Department of Mathematical Sciences, NJIT, e-mail:boubendi@njit.edu}, Carlos Jerez-Hanckes\thanks{Institute for Mathematical and Computational Engineering, School of Engineering, Pontificia Universidad Cat\'olica de Chile, Av. Vicuna Mackenna 4860, Macul, Santiago, Chile, e-mail: cjerez@ing.puc.cl}, Carlos P\'erez-Arancibia\thanks{Department of Mathematics, Massachusetts Institute of Technology, e-mail: cperezar@mit.edu}, Catalin Turc\thanks{Department of Mathematical Sciences, NJIT, e-mail:catalin.c.turc@njit.edu} }
\newtheorem{theorem}{Theorem}[section]
\newtheorem{lemma}[theorem]{Lemma}
\newenvironment{proof}{\hspace{0.5cm} {\bf Proof.}}
{$\quad {}_\blacksquare$\vspace{0.3cm}}
\date{}
\newcommand{\triple}[1]{{\left\vert\kern-0.25ex\left\vert\kern-0.25ex\left\vert #1 
    \right\vert\kern-0.25ex\right\vert\kern-0.25ex\right\vert}}
\begin{document}
\maketitle
\begin{abstract}
  We present non-overlapping Domain Decomposition Methods (DDM) based on quasi-optimal transmission operators for the solution of Helmholtz transmission problems with piece-wise constant material properties. The quasi-optimal transmission boundary conditions incorporate readily available approximations of Dirichlet-to-Neumann operators. These approximations consist of either complexified hypersingular boundary integral operators for the Helmholtz equation or square root Fourier multipliers with complex wavenumbers. We show that under certain regularity assumptions on the closed interface of material discontinuity, the DDM with quasi-optimal transmission conditions are well-posed. We present a DDM framework based on Robin-to-Robin (RtR) operators that can be computed robustly via boundary integral formulations. More importantly, the use of quasi-optimal transmission operators results in DDM that converge in small numbers of iterations even in the challenging high-contrast, high-frequency regime of Helmholtz transmission problems. Furthermore, the DDM presented in this text require only minor modifications to handle the case of transmission problems in partially coated domains, while still maintaining excellent convergence properties. We also investigate the dependence of the DDM iterative performance on the number of subdomains. 
 \newline \indent
  \textbf{Keywords}: Helmholtz transmission problems, domain decomposition methods, partial coatings.\\
   
 \textbf{AMS subject classifications}: 
 65N38, 35J05, 65T40,65F08
\end{abstract}

\section{Introduction}
\label{intro}

The phenomenon of electromagnetic wave scattering by bounded penetrable objects with constant electric permittivities is relevant for numerous applications in antenna design, diffraction gratings, to name but a few. Numerical methods based on Boundary Integral Equations (BIE) are well-suited for simulation of these types of applications owing to the dimension reduction, explicit enforcement of radiation conditions, and lack of numerical dispersion that they enjoy. There is a wide array of well-conditioned BIE of the second kind for the solution of Helmholtz transmission problems, see references~\cite{turc2,boubendir2015regularized} for in-depth discussions. These formulations rely on regularization techniques that incorporate readily available approximations of Dirichlet-to-Neumann (DtN) operators. Nevertheless, these formulations require relative large numbers of Krylov subspace iterations  for convergence in the high-contrast, high-frequency regime. This situation can be attributed to the lack of easily computable approximations of DtN operators for bounded domains~\cite{boubendir2015regularized}. Furthermore, the regularization strategy becomes cumbersome in the case of more complicated material properties, e.g., perfectly conducting coatings, multiple junctions. A different alternative is to resort to matrix compression techniques to produce direct solvers for the solution of Helmholtz transmission problems in the computationally challenging high-contrast, high-frequency regime~\cite{greengard1,greengard2}. However, the direct solvers proposed in~\cite{greengard1,greengard2} require large memory consumption and are difficult to parallelize. 

Domain Decomposition Methods (DDM) are natural candidates for the solution of scattering problems involving composite scatterers. DDM are divide-and-conquer strategies whereby the computational domain is divided into smaller subdomains for which solutions are matched via transmission conditions on subdomain interfaces. The convergence of DDM for time-harmonic wave scattering applications depends a great deal on the choice of the transmission conditions that allow the exchange of information between adjacent subdomains. These interface transmission conditions should ideally allow information to flow out of a subdomain with as little as possible information being reflected back into the subdomain. Thus, the interface transmission conditions fall into the category of Absorbing Boundary Conditions (ABC). From this perspective, the ideal choice of transmission conditions on an interface between two subdomains is such that the impedance/transmission operator is the restriction to the common interface of the DtN operator corresponding to the adjacent subdomain. Traditionally, the interface transmission conditions were chosen as the classical first order ABC outgoing Robin/impedance boundary conditions~\cite{Depres,Collino1}. The convergence of DDM with classical Robin interface boundary conditions is slow and is adversely affected by the number of subdomains. Fortunately, the convergence of DDM can be considerably improved through incorporation of ABC that constitute higher order approximations of DtN operators in the form of second order approximations with optimized tangential derivative coefficients~\cite{Gander1}, square root approximations~\cite{boubendirDDM}, or other types of non-local transmission conditions~\cite{Collino1,steinbach2011stable}. Alternatively, so-called Perfectly Matched Layers can be used at subdomain interfaces~\cite{stolk2013rapidly}.

We devote our attention to DDM for Helmholtz transmission problems that use transmission operators given by approximations of DtN operators in the form of either (a) hypersingular Helmholtz BIO, (b) square root Fourier multipliers, and (c) scalar multiples of identity operators. A fundamental requirement in DDM is that the subdomain Helmholtz problems with Robin/impedance boundary conditions that incorporate the aforementioned transmission operators are well posed. This entails the complexification of the wavenumbers in the definition of the three types of transmission operators considered above; this modification ensures that the transmission operators enjoy certain coercivity property that suffices for the well posedness of subdomain Helmholtz problems with Robin boundary conditions. In practice, these complex wavenumbers in the definition of the transmission operators are chosen to optimize the rate of convergence of the ensuing DDM in the case of simple geometries (e.g circles, infinite waveguides) amenable to analytical modal analysis~\cite{boubendirDDM}. A judicious choice of the complex wavenumbers in the definition of transmission operators gives rise to DDM whose rate of convergence is virtually independent of frequency; for this reason the transmission operators described above with appropriate complex wavenumbers are referred to as quasi-optimal transmission operators~\cite{boubendirDDM}. The DDM can be easily recast in operator form using certain subdomain Robin-to-Robin (RtR) operators that map outgoing Robin data to incoming Robin data defined in terms of transmission operators. In the case when the transmission operators correspond to the classical first order ABC, the ensuing RtR operators turn out to be unitary, a key ingredient in establishing the well posedness of DDM~\cite{Depres,Collino1}. However, the transmission operators of type (a), (b), and (c) considered above do not give rise to unitary RtR maps, and the wellposedness of the ensuing DDM is more complicated. Nevertheless, we investigate in more detail the  RtR operators by expressing them in terms of Boundary Integral Operators (BIO), and we are able to establish the well posedness of DDM for Helmholtz transmission problems with one closed interface of discontinuity under various assumptions on the regularity of that interface. 

The key computational ingredient in the implementation of DDM is the computation of subdomain RtR maps. We provide several robust representations of RtR maps in terms of BIOs and their inverses. We present high-order discretizations of the RtR operators based on Nystr\"om discretizations of the BIO that enter in their representations. We provide ample evidence that the DDM based on the quasi-optimal transmission operators considered in this text give rise to small numbers of Krylov subspace iterations for Helmholtz transmission problems in the high-frequency, high-contrast regime. Furthermore, these numbers of iterations depend very mildly on the frequency, which is in stark contrast with solvers based on BIE~\cite{turc2}. Nevertheless, the computation of RtR maps becomes more problematic in the high-frequency regime, where matrices of large size need be inverted. Thus, it is customary to resort to divide the penetrable scatterer in a collection of non-overlapping subdomains and formulate a DDM that takes these further subdivisions into account. In particular, the quasi-optimal transmission operators need be restricted to subdomain interfaces that are open arcs. This is slighly delicate, given that those operators are global operators, i.e. their definition requires an integration boundary that is a closed curve. These restrictions can be effected by localizations via smooth cut-off functions supported on subdomain interfaces~\cite{jerez2017multitrace}. This strategy allows for an extension of DDM with quasi-optimal transmission operators in the presence of cross points between subdomains, i.e.~points where three or more domains with different material properties meet. In particular, we can use the cut-off methodology to formulate DDM with quasi-optimal transmission operator for the case of transmission problems in partially coated domains. Again, the DDM that incorporate quasi-optimal transmission operators perform well in terms of numbers of iterations in the case of penetrable scatterers that are partially coated.  

Although the use of quasi-optimal transmission operators, as those recounted above, accelerates a great deal the convergence of DDM, the number of iterations required for convergence still grows with the number of subdomains. This is not entirely surprising since the transmission operators are chosen to optimize the local exchange of information between adjacent subdomains, and affect to a lesser degree the global exchange of information between distant subdomains. Recent efforts have been directed to construct ``double sweep''-type preconditioners that address the latter issue~\cite{vion2014double,zepeda2016method}. The resulting preconditioned DDM scale favorably with frequency and number of subdomains, but appear to be somewhat less effective for wave propagation problems in composite media that exhibit sharp high-contrast interfaces. The incorporation of DDM preconditioners is currently underway.  

The structure of this paper is as follows. Section~\ref{MS10} describes the Helmholtz transmission problem. In Section~\ref{MS20} we review the BIOs associated with the Helmholtz equation and their mapping properties, as well as the classical boundary integral equations of the second kind for the solution of transmission problems. The DDM approach with three choices of transmission operators is then introduced and analyzed in Section~\ref{DDMg}. We present in Section~\ref{MS1} the transmission problem in partially coated obstacles as well as BIE and DDM formulations of such problems. Section~\ref{Nystrom} discusses high-order Nystr\"om discretizations of the Robin-to-Robin maps that are central to DDM, while a variety of numerical results are shown in Section~\ref{num}. Finally,  the conclusions of this work are presented in Section~\ref{conclu}.

\parskip 2pt plus2pt minus1pt

\section{Scalar transmission problems \label{MS10}}

We consider the problem of two dimensional scattering by penetrable homogeneous scatterers. Let $\Omega_1$ denote a bounded domain in $\mathbb{R}^2$ whose boundary $\Gamma:=\partial\Omega_1$ is a closed curve, and let $\Omega_0:=\mathbb{R}^2\setminus\Omega_1$. We seek to find fields $u_0$ and $u_1$ that are solutions of the following scalar Helmholtz transmission problem:
\begin{equation}\label{system_t}
\begin{array}{rclll}
 \Delta u_j +k_j^2 u_j &=&0& {\rm in}& \Omega_j,\quad j=0,1\smallskip\\
  u_0+u^{inc}&=&u_1&{\rm on}& \Gamma,\smallskip\\
  \alpha_0(\partial_{n_0}u_0+\partial_{n_0}u^{inc})&=&-\alpha_1\partial_{n_1}u_1&{\rm on}& \Gamma,\smallskip\\
  \multicolumn{4}{c}{\displaystyle \lim_{r\to\infty}r^{1/2}(\partial u_0/\partial r-ik_0u_0)=0.} \end{array}\end{equation}
We assume that the wavenumbers $k_j$ and the quantities $\alpha_j$ in the subdomains $\Omega_j$ are positive real numbers. The unit normal to the boundary $\partial\Omega_j$ is here denoted by $n_j$ and is assumed to point to the exterior of the subdomain $\Omega_j$. The incident field $u^{inc}$, on the other hand, is assumed to satisfy the homogeneous Helmholtz equation with wavenumber $k_0$ in the unbounded domain $\Omega_0$. Finally, we assume that the parameters $\alpha_j$ are positive so that the transmission problem~\eqref{system_t} is well posed under the assumption that $\Gamma$ is given locally by the graph of a Lipschitz function. The well posedness remains valid in the case when $\Gamma$ is more regular.

In what follows, we review two main formulations of the transmission problem~\eqref{system_t}. One formulation relies on BIE, while the other is is a DDM.

\section{Boundary integral equation formulations\label{MS20}}

There is a wide variety of possibilities in which equations~\eqref{system_t} can be reformulated via robust BIE, see contribution~\cite{dominguez2016well} for an in-depth discussion. We will present here a BIE of the second kind. To this end, we make use of the four BIO associated with the Calder\'on calculus. Let $D\subset\mathbb{R}^2$ be a bounded domain whose boundary $\partial D=\Gamma$ is a closed curve. In what follows we will focus on two cases: (1) $\Gamma$ is a $C^2$ curve (or smoother), and (2) $\Gamma$ is given locally by the graph of a Lipschitz function. Given a wavenumber~$k>0$, and a density $\varphi:\Gamma\to\mathbb{C}$, we recall the definitions of the single layer potential~
$$[\emph{SL}_{\Gamma,k}(\varphi)](\mathbf{z}):=\int_\Gamma G_k(\mathbf{z}-\mathbf{y})\varphi(\mathbf{y})ds(\mathbf{y}),\quad \mathbf{z}\in\mathbb{R}^2\setminus\Gamma,$$
and the double layer potential 
$$[\emph{DL}_{\Gamma,k}(\varphi)](\mathbf{z}):=\int_\Gamma \frac{\partial G_k(\mathbf{z}-\mathbf{y})}{\partial\mathbf{n}(\mathbf{y})}\varphi(\mathbf{y})ds(\mathbf{y}),\quad \mathbf{z}\in\mathbb{R}^2\setminus\Gamma,$$
where $G_k(\mathbf{x})=\frac{i}{4}H_0^{(1)}(k|\mathbf{x}|)$ denotes the free-space two-dimensional Green's function of the Helmholtz equation with wavenumber $k$, and $\mathbf{n}$ denotes the unit normal pointing outside the domain $D$. Applying exterior (resp. interior) Dirichlet and Neumann traces on $\Gamma$, which are denoted by $\gamma_\Gamma^{D,\rm{ext}}$ and $\gamma_\Gamma^{N,\rm{ext}}$ (resp. $\gamma_\Gamma^{D,\rm{int}}$ and $\gamma_\Gamma^{N,\rm{int}}$), respectively, to the single and double layer potentials, we define the four Helmholtz BIO: single layer ($S_{\Gamma,k}$),  double layer ($K_{\Gamma,k}$), adjoint double layer ($K_{\Gamma,k}^\top$) and hypersingular  ($N_{\Gamma,k}$) operators, which satisfy
\begin{equation}\label{traces}\begin{array}{ccc}
\displaystyle\gamma_\Gamma^{D,\rm{ext}} \emph{SL}_{\Gamma,k}(\varphi)=\gamma_\Gamma^{D,\rm{int}} SL_{\Gamma,k}(\varphi)=S_{\Gamma,k}\varphi,&& \displaystyle\gamma_\Gamma^{N,\rm{ext}} \emph{DL}_{\Gamma,k}(\varphi)=\gamma_\Gamma^{N,\rm{int}} DL_k(\varphi)=N_{\Gamma,k}\varphi,\medskip\\
\displaystyle\gamma_\Gamma^{N,\rm{ext}} SL_{\Gamma,k}(\varphi)=-\frac{\varphi}{2}+K_{\Gamma,k}^\top \varphi,&&
\displaystyle\gamma_\Gamma^{D,\rm{ext}} DL_{\Gamma,k}(\varphi)=\frac{\varphi}{2}+K_{\Gamma,k}\varphi,\medskip\\
\displaystyle\gamma_\Gamma^{N,\rm{int}} SL_{\Gamma,k}(\varphi)=\frac{\varphi}{2}+K_{\Gamma,k}^\top \varphi,&&
\displaystyle\gamma_\Gamma^{D,\rm{int}} DL_{\Gamma,k}(\varphi)=-\frac{\varphi}{2}+K_{\Gamma,k}\varphi.
\end{array}\end{equation}
Next, we replace  the subindex $k$ in the definition of the layer potentials and BIO by the subindex $j$ of the wavenumber $k_j$ corresponding to the $\Omega_j$ subdomain. We also denote the BIO associated with the Laplace equation---wavenumber equal to zero---by using the subindex $L$. 

For any $D\subset\mathbb{R}^2$ domain with bounded boundary $\Gamma$, we denote by $H^s(D)$ the classical Sobolev space of order $s$ on $D$~(\emph{cf.}~\cite[Ch. 3]{mclean:2000} or \cite[Ch. 2]{adams:2003}). If $\Gamma$ is of regularity $C^2$, the Sobolev spaces defined on the boundary $\Gamma$,  $H^s(\Gamma)$ are well defined for any $s\in[-3,3]$. If $\Gamma$ is a Lipschitz boundary, $H^s(\Gamma)$ is well defined for any $s\in[-1,1]$. We recall that for any $s>t$, $H^s(\Sigma)\subset H^t(\Sigma)$, $\Sigma\in\{D,\Gamma\}$ with compact support. Moreover, and
$\big(H^t(\Gamma)\big)'=H^{-t}(\Gamma)$ when the inner product of $H^0(\Gamma)=L^2(\Gamma)$ is used as duality product. Let $\Gamma_0\subset\Gamma$ such that $meas(\Gamma_0)>0$. For $0<s\leq 1/2$ we define by ${H}^s(\Gamma_0)$ be the space of distributions that are restrictions to $\Gamma_0$ of functions in $H^s(\Gamma)$. The space $\widetilde{H}^s(\Gamma_0)$ is defined as the closed subspace of $H^s(\Gamma_0)$
\[
\widetilde{H}^s(\Gamma_0):=\{u\in H^s(\Gamma_0):\widetilde{u}\in H^s(\Gamma)\},\ 0<s\leq 1/2,
\]
where
\[\widetilde{u}:=\begin{cases}
 u, & {\rm on}\  \Gamma, \\
 0,  & {\rm on}\ \Gamma\setminus\Gamma_0.
\end{cases}
\]
We define then $H^t(\Gamma_0)$ to be the dual of $\widetilde{H}^{-t}(\Gamma_0)$ for $-1/2\leq t<0$, and $\widetilde{H}^t(\Gamma_0)$ the dual of $H^{-t}(\Gamma_0)$ for $-1/2\leq t<0$.

We recount next several important results related to the mapping properties of the four BIO of the Calder\'{o}n calculus~\cite{dominguez2016well}. These mapping properties depend a great deal on the regularity of $\Gamma$. In the case when $\Gamma$ is a $C^2$ closed curve we have
\begin{theorem}\label{mappingS}
  Let  $D$ be a bounded domain in $\mathbb{R}^2$, with a boundary $\Gamma$ that is $C^2$. The following mappings
\begin{itemize}
\item $S_k:H^{s}(\Gamma)\to H^{s+1}(\Gamma)$
\item $K_k:H^{s}(\Gamma)\to H^{s+3}(\Gamma)$
\item $K^\top_k:H^{s}(\Gamma)\to H^{s+3}(\Gamma)$
\item $N_k:H^{s+1}(\Gamma)\to H^{s}(\Gamma)$
\end{itemize}
are continuous for $s\in[-3,0]$. Furthermore, if $k_1\ne k_2$ we have that  
\begin{itemize}
\item $S_{k_1}-S_{k_2}:H^{0}(\Gamma)\to H^{3}(\Gamma)$
\item $N_{k_1}-N_{k_2}:H^{0}(\Gamma)\to H^{1}(\Gamma)$.
\end{itemize}
are continuous. 
\end{theorem}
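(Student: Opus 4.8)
\medskip
\noindent\emph{Proof strategy.} The statement collects classical mapping properties of the Calder\'on operators on $C^2$ curves, essentially contained in \cite{dominguez2016well} and the references therein; here is how I would assemble them. The plan is to organize everything around the splitting of the fundamental solution into its Laplace part and a smoother remainder. Using the series expansion of $H_0^{(1)}$ one writes $G_k(\mathbf{x}) = G_L(\mathbf{x}) + R_k(\mathbf{x})$ with $G_L(\mathbf{x})=-\tfrac{1}{2\pi}\log|\mathbf{x}|$ and
\[
R_k(\mathbf{x}) \;=\; -\tfrac{1}{2\pi}\,\log|\mathbf{x}|\,\bigl(J_0(k|\mathbf{x}|)-1\bigr) \;+\; a_k\!\left(|\mathbf{x}|^2\right),
\]
where $a_k$ is entire and, since $J_0(z)-1=O(z^2)$, the first term equals $|\mathbf{x}|^2\log|\mathbf{x}|$ times an entire function of $|\mathbf{x}|^2$ whose value at $0$ is $k^2/(8\pi)\neq 0$. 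Thus $R_k$ restricts to a boundary kernel of the pseudohomogeneous type ``smooth plus $|\mathbf{x}-\mathbf{y}|^2\log|\mathbf{x}-\mathbf{y}|$''. The geometric point on a $C^2$ curve is that applying a normal derivative in either variable and then restricting to $\Gamma$ does not lower the order of such a kernel by one unit: near the diagonal $(\mathbf{x}-\mathbf{y})\cdot\mathbf{n}(\mathbf{y})$ and $(\mathbf{x}-\mathbf{y})\cdot\mathbf{n}(\mathbf{x})$ are $O(|\mathbf{x}-\mathbf{y}|^2)$, with leading coefficient proportional to the signed curvature, so one factor of $|\mathbf{x}-\mathbf{y}|$ is always recovered. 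Carrying this out, $\partial_{\mathbf{n}(\mathbf{y})}R_k$ and $\partial_{\mathbf{n}(\mathbf{x})}R_k$ still restrict to order $-3$ boundary kernels, while the double normal derivative $\partial_{\mathbf{n}(\mathbf{x})}\partial_{\mathbf{n}(\mathbf{y})}R_k$ restricts to a kernel of the form ``smooth plus $c_k\log|\mathbf{x}-\mathbf{y}|$'' with $c_k\propto k^2$, i.e.\ of order $-1$ only.

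Granting the wavenumber-zero mapping properties on a $C^2$ curve --- $S_L$ of order $-1$ and $N_L$ of order $+1$, together with the continuity $K_L,K_L^\top\colon H^{s}(\Gamma)\to H^{s+3}(\Gamma)$ (which reflects the removable, curvature-proportional diagonal singularity of the double-layer kernel and is established, e.g.\ by a commutator/bootstrapping argument limited by the $C^2$ regularity of $\Gamma$, in \cite{dominguez2016well}) --- the four displayed continuity statements follow by writing $S_k=S_L+(S_k-S_L)$, $K_k=K_L+(K_k-K_L)$, $K_k^\top=K_L^\top+(K_k^\top-K_L^\top)$, $N_k=N_L+(N_k-N_L)$ and adding the gains contributed by the remainders computed above: three extra derivatives for the single- and double-layer operators, and for $N_k$ one derivative, which is already subsumed by the order $+1$ of $N_L$. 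The range $s\in[-3,0]$ is precisely what keeps both the domain index and the target index inside $[-3,3]$, the interval in which $H^s(\Gamma)$ is available for a $C^2$ curve; the binding constraint $s+3\le 3$ comes from $K_k$ and $K_k^\top$.

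For the two difference statements the Laplace parts cancel. Hence $S_{k_1}-S_{k_2}$ is the operator with boundary kernel $R_{k_1}-R_{k_2}$, which is of order $-3$ with nonzero leading coefficient proportional to $k_1^2-k_2^2$; therefore $S_{k_1}-S_{k_2}\colon H^{s}(\Gamma)\to H^{s+3}(\Gamma)$ for $s\in[-3,0]$, in particular $H^{0}(\Gamma)\to H^{3}(\Gamma)$. For the hypersingular difference I would invoke the Maue/G\"unter integration-by-parts identity in two dimensions, which represents $N_k$ as $-\tfrac{d}{ds}S_k\tfrac{d}{ds}$ plus $k^2$ times the single-layer-type operator with kernel $G_k(\mathbf{x}-\mathbf{y})\,\mathbf{n}(\mathbf{x})\cdot\mathbf{n}(\mathbf{y})$; subtracting the analogous expression for $k_2$ gives
\[
(N_{k_1}-N_{k_2})\varphi \;=\; -\tfrac{d}{ds}\Bigl(\bigl(S_{k_1}-S_{k_2}\bigr)\tfrac{d\varphi}{ds}\Bigr) \;-\; k_1^2\,\mathbf{n}\cdot\bigl(S_{k_1}-S_{k_2}\bigr)(\mathbf{n}\varphi) \;+\; (k_2^2-k_1^2)\,\mathbf{n}\cdot S_{k_2}(\mathbf{n}\varphi).
\]
The first two terms gain three derivatives and lose at most two, hence map $H^{0}(\Gamma)$ into $H^{1}(\Gamma)$; the last term gains one derivative, using that the components of $\mathbf{n}$ are $C^1$ (since $\Gamma\in C^2$) and therefore multipliers on $H^{s}(\Gamma)$ for $|s|\le 1$. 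Consequently $N_{k_1}-N_{k_2}\colon H^{0}(\Gamma)\to H^{1}(\Gamma)$, in agreement with the order-$(-1)$ boundary kernel of $N_k-N_L$ obtained above.

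The step I expect to be the main obstacle is the bookkeeping near the diagonal on a curve of limited regularity: one must turn the Euclidean Taylor expansions of $G_k$, $\partial_{\mathbf{n}}G_k$ and $\partial_{\mathbf{n}}\partial_{\mathbf{n}}G_k$, via the $C^2$ arc-length chart, into genuine pseudohomogeneous expansions of the boundary kernels, and then verify that the announced gains of regularity survive the mere $C^2$ smoothness of $\Gamma$ --- in particular, that $H^{s+1}(\Gamma)$, $H^{s+3}(\Gamma)$ and $H^{s}(\Gamma)$ are the sharp target spaces and that the stated range of $s$ is exactly the admissible one. Once these kernel expansions and the wavenumber-zero mapping properties are in hand, the remaining ingredients --- the algebra of the splittings, the mapping rules for tangential derivatives, and the multiplier estimates --- are routine.
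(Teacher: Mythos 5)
The paper offers no proof of Theorem~\ref{mappingS}: it is explicitly ``recounted'' from \cite{dominguez2016well}, so there is no in-paper argument to compare yours against. Your sketch follows the standard route by which such results are established and is correct in outline: split $G_k=G_L+R_k$ with $R_k$ of pseudohomogeneous type $|\mathbf{x}-\mathbf{y}|^2\log|\mathbf{x}-\mathbf{y}|$ plus smooth, use $(\mathbf{x}-\mathbf{y})\cdot\mathbf{n}=O(|\mathbf{x}-\mathbf{y}|^2)$ on a $C^2$ curve to see that normal derivatives of $R_k$ do not degrade the kernel order, and treat $N_{k_1}-N_{k_2}$ via Maue's identity (your algebra there checks out, and the resulting order $-1$ kernel is consistent with the $H^0\to H^1$ claim). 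Two caveats are worth recording. First, the argument is not self-contained precisely where the theorem is delicate: the three-order gain $K_L,K_L^\top:H^{s}(\Gamma)\to H^{s+3}(\Gamma)$ on a merely $C^2$ curve --- whose kernel is then only continuous, not differentiable, near the diagonal --- is imported from the same reference rather than derived, so your sketch reduces the theorem to the Laplace-case orders plus kernel expansions instead of proving it. Second, converting the Euclidean expansions into boundary pseudohomogeneous expansions and verifying that an order $-3$ boundary kernel actually yields a bounded map into $H^{s+3}(\Gamma)$ over the full stated range of $s$ on a chart of only $C^2$ regularity is the genuinely technical step; you flag it yourself, and it is exactly the content of the cited work. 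As a blind reconstruction of the intended proof architecture this is sound; as a proof it is a correct reduction, not a complete argument.
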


In the case of Lipschitz boundaries $\Gamma$, we will make use of the following mapping properties~\cite{dominguez2016well}: 
\begin{theorem}\label{mapping}
  Let  $D$ be a bounded domain in $\mathbb{R}^2$, with Lipschitz boundary $\Gamma$. The following mappings
\begin{itemize}
\item $S_k:H^{s}(\Gamma)\to H^{s+1}(\Gamma)$
\item $K_k:H^{s+1}(\Gamma)\to H^{s+1}(\Gamma)$
\item $K^\top_k:H^{s}(\Gamma)\to H^{s}(\Gamma)$
\item $N_k:H^{s+1}(\Gamma)\to H^{s}(\Gamma)$
\end{itemize}
are continuous for $s\in[-1,0]$. Furthermore, if $k_1\ne k_2$ we have that  
\begin{itemize}
\item $S_{k_1}-S_{k_2}:H^{-1}(\Gamma)\to H^{1}(\Gamma)$
\item $K_{k_1}-K_{k_2}:H^{0}(\Gamma)\to H^{1}(\Gamma)$
\item $K^\top_{k_1}-K^\top_{k_2}:H^{-1}(\Gamma)\to H^{0}(\Gamma)$
\item $N_{k_1}-N_{k_2}:H^{0}(\Gamma)\to H^{0}(\Gamma)$.
\end{itemize}
are continuous and compact. 
\end{theorem}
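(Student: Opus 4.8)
The plan is to deduce the continuity assertions from the classical $L^2$-boundedness of the Helmholtz layer operators on Lipschitz curves, together with duality and interpolation on the scale $H^s(\Gamma)$, $|s|\le 1$, and to deduce the smoothing---hence the compactness---of the wavenumber differences from the extra regularity of $G_{k_1}-G_{k_2}$.

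For the four continuity statements it suffices to treat the two extreme cases $s=0$ and $s=-1$ and then interpolate, using $[H^{s_0}(\Gamma),H^{s_1}(\Gamma)]_\theta=H^{(1-\theta)s_0+\theta s_1}(\Gamma)$ for $s_0,s_1\in[-1,1]$ on a Lipschitz curve. The case $s=0$ rests on the boundedness of Cauchy-type singular integrals on Lipschitz curves (Coifman--McIntosh--Meyer and Calder\'{o}n's commutator theorem): this yields $S_{\Gamma,k}:L^2(\Gamma)\to H^1(\Gamma)$, $K_{\Gamma,k},K^\top_{\Gamma,k}:L^2(\Gamma)\to L^2(\Gamma)$, and, via a commutator estimate for $[\partial_s,K_{\Gamma,k}]$, also $K_{\Gamma,k}:H^1(\Gamma)\to H^1(\Gamma)$; the hypersingular operator is first rewritten through a Maue-type integration-by-parts identity expressing $N_{\Gamma,k}$ in terms of $S_{\Gamma,k}$, tangential derivatives and multiplication operators, which reduces its analysis to that of $S_{\Gamma,k}$ and gives $N_{\Gamma,k}:H^1(\Gamma)\to L^2(\Gamma)$. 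The case $s=-1$ then follows by $L^2(\Gamma)$-duality---$S_{\Gamma,k}$ and $N_{\Gamma,k}$ being symmetric and $K^\top_{\Gamma,k}=(K_{\Gamma,k})'$---so that $S_{\Gamma,k}:H^{-1}(\Gamma)\to L^2(\Gamma)$, $N_{\Gamma,k}:L^2(\Gamma)\to H^{-1}(\Gamma)$ and $K^\top_{\Gamma,k}:H^{-1}(\Gamma)\to H^{-1}(\Gamma)$. (The Verchota layer-potential trace argument recovers the $s=-1/2$ energy-space mappings, which lie in the interpolation range.)

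For the second part, the crucial observation is that $G_{k_1}-G_{k_2}$ is far less singular than either $G_{k_j}$. From $H_0^{(1)}(z)=J_0(z)+\tfrac{2i}{\pi}\big(\log(z/2)+\gamma\big)J_0(z)+(\text{entire, even})$ with $J_0$ entire and even, one gets $G_{k_1}(\mathbf{x})-G_{k_2}(\mathbf{x})=a(|\mathbf{x}|^2)+b(|\mathbf{x}|^2)\,|\mathbf{x}|^2\log|\mathbf{x}|$ with $a,b$ real-analytic; hence the leading logarithmic singularity of $S_{\Gamma,k}$ and the Cauchy-type singularities of $K_{\Gamma,k}$ and $K^\top_{\Gamma,k}$ are removed, and the finite-part singularity of $N_{\Gamma,k}$ is reduced to a weakly singular one, in the respective differences. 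Handling $N_{k_1}-N_{k_2}$ once more through the Maue identity so that all four cases reduce to weakly singular or smooth kernels, one obtains that each difference operator gains one further Sobolev order relative to the corresponding individual operator, i.e. $S_{k_1}-S_{k_2}:H^{-1}(\Gamma)\to H^1(\Gamma)$, $K_{k_1}-K_{k_2}:L^2(\Gamma)\to H^1(\Gamma)$, $K^\top_{k_1}-K^\top_{k_2}:H^{-1}(\Gamma)\to L^2(\Gamma)$, $N_{k_1}-N_{k_2}:L^2(\Gamma)\to L^2(\Gamma)$; an additional fractional gain is available from the H\"{o}lder continuity of the difference kernel, so composing with the compact Rellich embedding $H^{t'}(\Gamma)\hookrightarrow H^{t}(\Gamma)$ ($t'>t$) on the compact Lipschitz curve $\Gamma$ upgrades each of these to a compact operator between the indicated spaces.

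The step I expect to be the main obstacle is the second one for a merely Lipschitz $\Gamma$, where the pseudodifferential/symbol calculus available for $C^2$ curves cannot be used: one must quantify directly, through pseudo-homogeneous kernel estimates and the $L^2$ theory of Calder\'{o}n--Zygmund operators on Lipschitz graphs, that the $|\mathbf{x}|^2\log|\mathbf{x}|$-type regularity of $G_{k_1}-G_{k_2}$ indeed produces a gain of a full Sobolev order for each operator. The hypersingular case $N_{k_1}-N_{k_2}$ is the most delicate, since its individual pieces are not integrable; the cleanest remedy is to absorb that difficulty into the Maue identity before estimating, reducing everything to weakly singular kernels. The purely continuity-based part of the theorem is essentially a restatement of results collected in \cite{dominguez2016well}.
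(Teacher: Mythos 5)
The paper does not actually prove this theorem: it is recalled from the literature (the text introducing it reads ``we will make use of the following mapping properties~\cite{dominguez2016well}''), so there is no in-paper argument to compare yours against. Your sketch is a reasonable reconstruction of the standard proof of these facts on Lipschitz curves: establishing the endpoint $s=0$ via Coifman--McIntosh--Meyer/Verchota, passing to $s=-1$ by $L^2$-duality, and interpolating is exactly how these continuity properties are usually obtained, and deriving the smoothing of the wavenumber differences from the expansion $G_{k_1}-G_{k_2}=a(|\mathbf{x}|^2)+b(|\mathbf{x}|^2)\,|\mathbf{x}|^2\log|\mathbf{x}|$ together with a Maue identity for the hypersingular operator is likewise the standard route. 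One step as written does not quite work, though: to upgrade the continuity of the differences to compactness you invoke ``an additional fractional gain \dots composed with the Rellich embedding $H^{t'}(\Gamma)\hookrightarrow H^{t}(\Gamma)$'', but on a merely Lipschitz curve the scale $H^s(\Gamma)$ is only available for $|s|\le 1$, so for instance for $S_{k_1}-S_{k_2}:H^{-1}(\Gamma)\to H^{1}(\Gamma)$ there is no room above $H^1(\Gamma)$ (nor below $H^{-1}(\Gamma)$) in which to place an intermediate space. The fix is elementary and keeps your overall strategy intact: after integrating by parts in both variables so that every difference kernel is at worst $O(\log|\mathbf{x}-\mathbf{y}|)$, use that weakly singular kernels on a compact curve define compact operators on $L^2(\Gamma)$ (approximate by kernels truncated near the diagonal, which are Hilbert--Schmidt); compactness of, say, $\partial_s(S_{k_1}-S_{k_2})\partial_s$ on $L^2(\Gamma)$ then gives compactness of $S_{k_1}-S_{k_2}:H^{-1}(\Gamma)\to H^{1}(\Gamma)$ directly, and the other three differences are handled in the same way.
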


We also recount a result due to Escauriaza, Fabes and Verchota~\cite{EsFaVer:1992}. In this result, $K_L$, $K_L^\top$ are the double and adjoint double layer operator for Laplace equation (which obviously correspond to $k=0$).

\begin{theorem}\label{theo:inv}
For any Lipschitz curve $\Gamma$ and  $\lambda\not\in [-1/2,1/2)$, the mappings
\[
 \lambda I+K_L :H^s(\Gamma)\to H^s(\Gamma)
\]
are invertible for $s\in[-1,1]$. Furthermore, the mappings
\[
\frac{1}{2}I\pm K_L:H^s(\Gamma)\to H^s(\Gamma)
\]
are Fredholm of index zero for $s\in[-1,1]$.
\end{theorem}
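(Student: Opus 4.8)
The plan is to reduce the invertibility of $\lambda I+K_L$ to the known results of Verchota on the $L^2$-theory of layer potentials for Laplace's equation on Lipschitz domains, and then propagate the result across the Sobolev scale by duality and interpolation. First I would recall that Verchota's work (building on Coifman--McIntosh--Meyer and the Rellich identities of Jerison--Kenig) establishes that $\pm\frac12 I+K_L$ and $\pm\frac12 I + K_L^\top$ are Fredholm of index zero on $L^2(\Gamma)=H^0(\Gamma)$ for any Lipschitz $\Gamma$, and moreover that $\lambda I+K_L$ is invertible on $L^2(\Gamma)$ whenever $\lambda\notin[-\tfrac12,\tfrac12)$; the endpoint $\lambda=\tfrac12$ corresponds to the solvability of the interior Neumann/exterior Dirichlet problems, while $\lambda=-\tfrac12$ is the exceptional value where the kernel consists of constants. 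So for $s=0$ the statement is exactly the classical Escauriaza--Fabes--Verchota result, and the role of the hypothesis $\lambda\notin[-\tfrac12,\tfrac12)$ is precisely to stay away from both endpoint obstructions.

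Next I would extend from $s=0$ to $s=1$. The natural tool is the representation of $K_L$ on $H^1(\Gamma)$ via the commutator identity relating $K_L$ acting on tangentially-differentiated densities to $K_L^\top$ (equivalently, the fact that the double layer potential maps $H^1(\Gamma)\to H^{3/2}$ into the domain and one controls $\nabla$ of the trace); concretely, on a Lipschitz curve in $\mathbb{R}^2$ one has an identity of the schematic form $\partial_\Gamma K_L \varphi = K_L^\top \partial_\Gamma\varphi + (\text{compact})$, so that $\lambda I+K_L:H^1(\Gamma)\to H^1(\Gamma)$ is, modulo compact perturbations, conjugate to $\lambda I+K_L^\top$ on $L^2(\Gamma)$. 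Invertibility of the latter for $\lambda\notin[-\tfrac12,\tfrac12)$ (again Verchota, using that $K_L^\top$ is the $L^2$-adjoint of $K_L$) then gives that $\lambda I+K_L$ on $H^1(\Gamma)$ is Fredholm of index zero; injectivity follows from the $H^1\subset H^0$ inclusion and the already-established injectivity on $H^0$, so it is in fact invertible. With $s=0$ and $s=1$ in hand, the range $s\in(0,1)$ follows by complex interpolation of the bounded inverses, and the range $s\in[-1,0)$ follows by duality: since $(H^t(\Gamma))'=H^{-t}(\Gamma)$ and $K_L^\top$ is the adjoint of $K_L$ with respect to the $L^2$ pairing, invertibility of $\lambda I+K_L^\top$ on $H^{s}(\Gamma)$ for $s\in[0,1]$ — proven the same way — dualizes to invertibility of $\lambda I + K_L$ on $H^{-s}(\Gamma)$. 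The second assertion, that $\tfrac12 I\pm K_L$ is Fredholm of index zero on $H^s(\Gamma)$ for $s\in[-1,1]$, is obtained by the same interpolation/duality bootstrap starting from the $L^2$ Fredholmness statement (here one cannot claim invertibility because of the Neumann-problem obstruction at $\lambda=\tfrac12$ and the constants in the kernel at $\lambda=-\tfrac12$).

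I expect the main obstacle to be the $s=1$ step: rigorously justifying the commutator identity $\partial_\Gamma K_L = K_L^\top\partial_\Gamma + \text{compact}$ (or whichever precise form is used) on a merely Lipschitz curve requires care, because the "compact remainder" must be shown compact on $L^2(\Gamma)$, which relies on the curve having better-than-Lipschitz regularity only in an averaged/Cauchy-integral sense; in two dimensions this is cleaner than in higher dimensions since $\Gamma$ is a curve and $\partial_\Gamma$ is a single tangential derivative, but the argument still leans on the deep boundedness of the Cauchy integral on Lipschitz graphs. An alternative that sidesteps the commutator is to invoke directly the mapping properties of the double-layer potential between Besov spaces on Lipschitz domains (as in Jerison--Kenig / Verchota), from which the $H^1(\Gamma)$ statement is read off; in the write-up I would cite \cite{EsFaVer:1992} for the $L^2$ result and the standard Lipschitz layer-potential literature for the $H^1$ endpoint, then present the interpolation and duality steps explicitly since those are short and self-contained given Theorem~\ref{mapping}.
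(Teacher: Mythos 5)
The paper does not actually prove this statement: Theorem~\ref{theo:inv} is explicitly ``recounted'' from Escauriaza--Fabes--Verchota \cite{EsFaVer:1992} and used as a black box in the proof of Theorem~\ref{eq:inv_1}, so there is no in-paper argument to compare against. Your proposal is a reasonable reconstruction of how the result is established in the literature, and its architecture --- the $L^2$ invertibility of $\lambda I+K_L$ for $\lambda\notin[-1/2,1/2)$ from \cite{EsFaVer:1992} (resting on Coifman--McIntosh--Meyer and Rellich/Verchota), an $H^1$ endpoint, then complex interpolation for $0<s<1$ and duality via $(H^t(\Gamma))'=H^{-t}(\Gamma)$ with $K_L^\top$ as the $L^2$-adjoint for $s\in[-1,0)$ --- is the standard and correct route, consistent with the functional-analytic framework the paper sets up before stating the theorem.

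The one step I would push back on is your $s=1$ endpoint. The schematic identity $\partial_\Gamma K_L\varphi = K_L^\top\partial_\Gamma\varphi + (\text{compact})$ is not available on a merely Lipschitz curve: the error terms are Calder\'on-commutator--type operators, which are \emph{bounded} but in general not compact on $L^2(\Gamma)$ (compactness of such remainders is exactly what fails when one drops $C^{1}$ or $C^{1,\alpha}$ regularity --- compare how Theorem~\ref{mapping} only gives boundedness, not smoothing, for $K_L$ on Lipschitz curves). The rigorous route for the $H^1$ statement is instead the \emph{exact} Plemelj--Calder\'on intertwining $(\lambda I+K_L)S_L=S_L(\lambda I+K_L^\top)$ combined with the invertibility of $S_L:L^2(\Gamma)\to H^1(\Gamma)$, which transfers the $L^2$ invertibility of $\lambda I+K_L^\top$ to $H^1$ invertibility of $\lambda I+K_L$; in two dimensions one must additionally handle the exceptional curves of unit logarithmic capacity on which $S_L$ fails to be injective (a scaling of the domain fixes this). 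You do flag the $H^1$ step as the main obstacle and offer the fallback of citing the Lipschitz layer-potential literature directly, which is in effect what the paper itself does, so the proposal is acceptable as a citation-backed outline; but as a self-contained proof the commutator step would need to be replaced by the intertwining argument.
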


BIE formulations of the transmission problem~\eqref{system_t} can be derived using layer potentials defined on $\Gamma$: the solutions $u_j,j=0,1,$ of the transmission problem are sought in the form:
\begin{equation}\label{eq:layer_t}
  u_j(\mathbf{x}):=SL_{\Gamma,j}\ v+(-1)^j\alpha_j^{-1} DL_{\Gamma,j}\ p,\quad \mathbf{x}\in\Omega_j,
\end{equation}
where $v$ and $p$ are densities defined on the $\Gamma$ and the double layer operators are defined with respect to exterior unit normals $\mathbf{n}$ corresponding to each domain $\Omega_j$. Applying Dirichlet and Neumann traces followed by transmission conditions, we arrive at the the following pair of integral equations:
\begin{equation}\label{eq:CFIESK}
\begin{array}{rcl}
\frac{\alpha_0^{-1}+\alpha_1^{-1}}{2}p -(\alpha_0^{-1}K_0+\alpha_1^{-1}K_1)p + (S_1 - S_0)v&=& \displaystyle u^{inc} \\
  \frac{\alpha_0+\alpha_1}{2}v+(N_0 - N_1)p+ (\alpha_0K_0^\top+\alpha_1K^\top_1)v&=& -\alpha_0\displaystyle \partial_{n_0} u^{inc}
\end{array}
\end{equation}
Note that the combination $N_0-N_1$ occurs, this is an integral operator with a weakly-singular kernel. In what follows we refer to the integral equations~\eqref{eq:CFIESK} by CFIESK. The well posedness of the CFIESK formulation in the space $(p,v)\in H^{0}(\Gamma)\times H^{0}(\Gamma)$ was established in~\cite{KressColton} in the case when $\Gamma$ is $C^2$. The well posedness of the CFIESK formulation in the space $(p,v)\in H^{1/2}(\Gamma)\times H^{-1/2}(\Gamma)$ was established in~\cite{ToWe:1993} in the case when $\Gamma$ is Lipschitz. There are several other possibilities to reformulate the transmission equation~\eqref{system_t} in terms of well-posed BIE~\cite{turc2,dominguez2016well}. We chose to focus on CFIESK formulations in this text, as these can be readily extended to more complex scenarios such as transmission problems in piece-wise constant composite domains that feature multiple junctions~\cite{claeys2015second,greengard1} or transmission problems in partially coated domains---see Section~\ref{MS1}.

 \section{Domain decomposition approach\label{DDMg}}

DDM are natural candidates for numerical solution of transmission problems~\eqref{system_t}. A non-overlapping domain decomposition approach for the solution of equations~\eqref{system_t} consists of solving subdomain problems in $\Omega_j,j=0,1$ with matching Robin transmission boundary conditions on the common subdomain interface $\Gamma$. Indeed, this procedure amounts to computing the subdomain solutions:

\begin{eqnarray}\label{DDM_t}
  \Delta u_j +k_j^2 u_j &=&0\qquad {\rm in}\quad \Omega_j,\\
  \alpha_j(\partial_{n_j}u_j+\delta_j^0\partial_{n_j}u^{inc})+Z_j(u_j+\delta_j^0 u^{inc})&=&-\alpha_\ell(\partial_{n_\ell}u_\ell+\delta_\ell^0 \partial_{n_\ell}u^{inc})+Z_j (u_\ell+\delta_\ell^0 u^{inc})\quad{\rm on}\quad \Gamma,\nonumber
\end{eqnarray}
where $\{j,\ell\}=\{0,1\}$ and $\delta_j^0$ stands for the Kronecker symbol, and $Z_j,Z_\ell$ are transmission operators with the following mapping property $Z_{j,\ell}:H^{1/2}(\Gamma)\to {H}^{-1/2}(\Gamma)$. The choice of the operators  $Z_j,Z_\ell$ should be such that the following PDEs are well posed
\begin{eqnarray}\label{domain_eqs_t}
   \Delta u_j+k_j^2u_j&=&0\ {\rm in}\ \Omega_j,\nonumber\\
   \alpha_j\partial_{n_j} u_j+Z_ju_j&=&\psi_j\ {\rm on}\ \Gamma,
\end{eqnarray}
for $j=0,1$, where we require in addition that $u_0$ be radiative at infinity. A sufficient condition for the well-posedness of these problems is given by
\begin{equation}\label{eq:well-pos_t}
  \pm\Im \int_{\Gamma}Z_1\varphi\ \overline{\varphi}ds>0\quad\mbox{and}\quad \Im \int_{\Gamma}Z_0\varphi\ \overline{\varphi}ds<0,\quad{\rm for\ all}\ \varphi\in H^{1/2}(\Gamma),
\end{equation}
under the assumption that $\alpha_j$ are positive numbers (cf.~\cite[Theorem 3.37]{KressColton}). In addition, $Z_0+Z_1:H^{1/2}(\Gamma)\to {H}^{-1/2}(\Gamma)$ must be a bijective operator in order to guarantee that the solution of the DDM system~\eqref{DDM_t} is also a solution of the original transmission problem~\eqref{system_t} (see Theorem~\ref{thm:wp_DDM}).  In order to describe the DDM method more concisely we introduce subdomain Robin-to-Robin (RtR) maps~\cite{Collino1}. For each subdomain $\Omega_j$, $j=0,1$, we define RtR maps $\mathcal{S}^j$, $j=0,1$, in the following manner:
\begin{equation}\label{RtRboxj_t}
   \mathcal{S}^0(\psi_0):=(\alpha_0\partial_{n_0} u_0-Z_1u_0)|_{\Gamma},\qquad \mathcal{S}^1(\psi_1):=(\alpha_1\partial_{n_1} u_1-Z_0u_1)|_{\Gamma}
 \end{equation}
 where $u_j$, $j=0,1$, are solutions of equations~\eqref{domain_eqs_t}. The DDM~\eqref{DDM_t} can be recast in terms of computing the global Robin data $f=[f_0\ f_1]^\top$ with
\[
f_{j}:=(\alpha_j\partial_{n_j}u_j+Z_j u_j)|_{\Gamma},\ j=0,1,
\]
as the solution of the following linear system that incorporates the subdomain RtR maps $\mathcal{S}^j,j=0,1$, previously defined
\begin{equation}\label{ddm_t}
 (I+\mathcal{S})f=g,\quad \mathcal{S}:=\begin{bmatrix}0&\mathcal{S}^1\\\mathcal{S}^0&0\end{bmatrix}
 \end{equation}
with right-hand side $g=[g_0\ g_1]^\top$ wherein
 \begin{eqnarray}\label{rhs_ddm_t}
   g_0&=& -(\alpha_0\partial_{n_0}u^{inc}+Z_0u^{inc})|_{\Gamma}\nonumber\\
   g_1&=&(-\alpha_0\partial_{n_0}u^{inc}+Z_1u^{inc})|_{\Gamma}.\nonumber
   \end{eqnarray}
 We note that due to its possibly large size, the DDM linear system~\eqref{ddm_t} is typically solved in practice via iterative methods. The behavior of iterative solvers of equations~\eqref{ddm_t} depends a great deal on the choice of transmission operators $Z_j$, $j=0,1$. Ideally, these transmission operators should be chosen so that information flows out of the subdomain and no information is reflected back into the subdomain. This can be achieved if the operator $Z_0$ is the Dirichlet-to-Neumann (DtN) operator corresponding to the Helmholtz equation~\eqref{domain_eqs_t} posed in the domain $\Omega_1$ and viceversa~\cite{Nataf,HJP13}. Since such DtN operators are not well defined for all wavenumbers $k_0$ and $k_1$, and expensive to calculate even when properly defined, easily computable approximations of DtN maps can be employed effectively to lead to faster convergence rates of GMRES solvers for DDM algorithms~\cite{boubendirDDM}. For instance, the transmission operators can be chosen in the following manner~\cite{turc2016well}:
 \begin{equation}\label{eq:calT_t}
 Z_0=-2\alpha_1N_{\Gamma,k_1+i\sigma_1},\qquad Z_1 = -2\alpha_0N_{\Gamma,k_0+i\sigma_0},\quad \sigma_j>0.
 \end{equation}
 Given that amongst Helmholtz BIOs, hypersingular operators are more expensive to compute, we proceed to replace the hypersingular operators in equation~\eqref{eq:calT_t} by principal symbol Fourier multiplier operators. The latter principal symbols are defined as
 \begin{equation}\label{eq:pN_t}
 p^N(\xi, k_0+i\sigma_0)=-\frac{1}{2}\sqrt{|\xi|^2-(k_0+i\sigma_0)^2}\quad {\rm and}\quad p^N(\xi, k_2+i\sigma_2)=-\frac{1}{2}\sqrt{|\xi|^2-(k_1+i\sigma_1)^2},
 \end{equation}
 where the square root branches are chosen such that the imaginary parts of the principal symbols are positive. The principal symbol Fourier multipliers are defined in the Fourier space $TM(\Gamma)$~\cite{AntoineX} as
 \begin{equation}
   [PS(N_{\Gamma,k_j+i\sigma_j})\varphi_1]\hat\ (\xi)=p^N(\xi, k_j+i\sigma_j)\hat{\varphi_1}(\xi)
 \end{equation}
 for a density $\varphi_1$ defined on $\partial\Omega_1$. We define accordingly
 \begin{equation}\label{eq:calPST_t}
 Z_0^{PS}=-2\alpha_1PS(N_{\Gamma,k_1+i\sigma_1}),\qquad Z_1^{PS} = -2\alpha_0PS(N_{\Gamma,k_0+i\sigma_0}),\quad \sigma_j>0,
 \end{equation}
 and use the operators in equation~\eqref{eq:calPST_t} as transmission operators in the DDM formulation. We refer to the ensuing DDM with transmission operators defined in~\eqref{eq:calPST_t} as Optimized DDM (DDMO). We note that given that both operators $Z_j$ and $Z_j^{PS}$ satisfy a G\aa rding inequality for $j=0,1$, it follows that $Z_0+Z_1$ as well as $Z_0^{PS}+Z_1^{PS}$ also satisfy  G\aa rding inequalities, and thus the latter operators are also invertible as operators from $H^{1/2}(\Gamma)$ to $H^{-1/2}(\Gamma)$. In addition, a high-frequency approximation as $k_j\to\infty$ of the square root expressions defined in equations~\eqref{eq:pN_t} results in yet another possible choice of transmission operators
 \begin{equation}\label{eq:damping}
   Z_0^a=-i\alpha_1(k_1+i\sigma_1)I\qquad Z_1^a=-i\alpha_0(k_0+i\sigma_0)I,
 \end{equation}
 where $I$ denotes the identity operator. The transmission operators defined in equation~\eqref{eq:damping} were originally introduced in a DDM setting in~\cite{boubendir2000domain}. We study in this paper the well-posedness of the DDM system~\eqref{ddm_t} with the aforementioned choices of transmission operators~\eqref{eq:calT_t},\phantom{a}\eqref{eq:calPST_t}, and~\eqref{eq:damping}. To the best of our knowledge, the first proof regarding the well-posedness of DDM  with Robin transmission for Helmholtz problems condition was provided in~\cite{Collino1} with $Z_j=i\eta,\ \eta<0$. In that case the RtR operators turn out to be unitary, a property that plays a crucial role in the well-posedness proof. In our case, neither of the choices presented above --i.e. equations~\eqref{eq:calT_t},\eqref{eq:calPST_t}, and~\eqref{eq:damping}-- leads to unitary RtR operators, and thus the proof of well-posedness of the DDM system~\eqref{ddm_t} should rely on different arguments. To this end, we look closer into the nature of the RtR operators by deriving exact representations of those in terms of boundary integral operators.

 \subsection{Calculations of RtR operators in terms of boundary integral operators~\label{rtr}}

The RtR operators $\mathcal S^0$ and $\mathcal S^1$ can be expressed in terms of solutions of the following Helmholtz problems
 \begin{eqnarray*}
   \Delta u_j+k_j^2u_j&=&0\ {\rm in}\ \Omega_j,\nonumber\\
   \partial_{n_j} u_j+\alpha_j^{-1}Z_ju_j&=&\varphi_j\ {\rm on}\ \Gamma,
 \end{eqnarray*}
for $j=0,1$, and with $u_0$ radiative at infinity, for which
\begin{equation*}
   \mathcal{S}^0(\varphi_0):=(\partial_{n_0} u_0-\alpha_0^{-1}Z_1u_0)|_{\Gamma}\quad\mbox{and}\quad \mathcal{S}^1(\varphi_1):=(\partial_{n_1} u_1-\alpha_1^{-1}Z_0u_1)|_{\Gamma}.
 \end{equation*}
 It turns out that the operators $\mathcal{S}^1$ can be computed robustly in a straightforward manner. Indeed, we start with Green's identity
\[
u_1=-DL_1(u_1|_\Gamma)+SL_1(\partial_{n_1}u_1|_\Gamma),\qquad{\rm in}\ \Omega_1
\]
to which we apply the Dirichlet trace on $\Gamma$ to derive another {\em direct} boundary integral equation
\begin{equation}\label{eq:int_D}
  \mathcal{B}_1u_1|_\Gamma=S_1\varphi_1,\ {\rm on}\ \Gamma\quad\mbox{where}\quad \mathcal{B}_1u_1|_\Gamma:=\left(\frac{1}{2}I+K_1+\alpha_1^{-1}S_1Z_1\right)u_1|_\Gamma.
\end{equation}
We establish the following result
\begin{theorem}\label{eq:inv_1}
 The operator $\mathcal{B}_1$ defined in equation~\eqref{eq:int_D} with $Z_1=-2\alpha_0 N_{k_0+i\sigma_0}$  is invertible with continuous inverse in the spaces $H^s(\Gamma)$ for all $s\in[-3,3]$ in the case when $\Gamma$ is $C^2$. In the case of Lipschitz $\Gamma$, the operator $\mathcal{B}_1$ defined in equation~\eqref{eq:int_D} is invertible with continuous inverse in the spaces $H^s(\Gamma)$ for all $s\in[-1,1]$.
\end{theorem}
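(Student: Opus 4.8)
The plan is to establish Theorem~\ref{eq:inv_1} by showing that $\mathcal{B}_1$ is (i) Fredholm of index zero on the relevant Sobolev scale and (ii) injective; the bounded inverse theorem then produces the continuous inverse.

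\textbf{Fredholmness.} Substituting $Z_1=-2\alpha_0 N_{k_0+i\sigma_0}$ gives $\mathcal{B}_1=\tfrac12 I+K_1-2\tfrac{\alpha_0}{\alpha_1}S_1 N_{k_0+i\sigma_0}$, and I would reduce this modulo compact operators to something manifestly Fredholm of index zero. The two ingredients are (a) the Calder\'on identity $S_{\Gamma,k}N_{\Gamma,k}=K_{\Gamma,k}^2-\tfrac14 I$, valid for any wavenumber with nonnegative imaginary part since it follows from Green's representation formula; and (b) the smoothing of differences of layer operators evaluated at different wavenumbers, from Theorems~\ref{mappingS} and~\ref{mapping} (extended to a complex wavenumber, together with duality and interpolation within the range of $s$ at hand). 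In the $C^2$ case $K_1$ and $K_{k_0+i\sigma_0}$ are already compact on $H^s(\Gamma)$; writing $S_1=S_{k_0+i\sigma_0}+(S_1-S_{k_0+i\sigma_0})$ and invoking (a)--(b) yields $\mathcal{B}_1\equiv\tfrac12 I+\tfrac{\alpha_0}{2\alpha_1}I=\tfrac{\alpha_0+\alpha_1}{2\alpha_1}I$ modulo compacts, a nonzero multiple of the identity, hence Fredholm of index zero for $s\in[-3,3]$. In the Lipschitz case one instead replaces $K_1$, $N_{k_0+i\sigma_0}$, $S_1$ by their Laplace counterparts $K_L$, $N_L$, $S_L$ (the errors being compact by Theorem~\ref{mapping}) and uses $S_L N_L=K_L^2-\tfrac14 I$ to obtain, modulo compacts, $\mathcal{B}_1\equiv\tfrac{\alpha_0+\alpha_1}{2\alpha_1}I+K_L-2\tfrac{\alpha_0}{\alpha_1}K_L^2$. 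The decisive algebraic step is the factorization of this quadratic polynomial in $K_L$,
\[
\tfrac{\alpha_0+\alpha_1}{2\alpha_1}I+K_L-2\tfrac{\alpha_0}{\alpha_1}K_L^2=-2\tfrac{\alpha_0}{\alpha_1}\Bigl(K_L-\tfrac{\alpha_0+\alpha_1}{2\alpha_0}I\Bigr)\Bigl(K_L+\tfrac12 I\Bigr).
\]
Since $\alpha_0,\alpha_1>0$ we have $\tfrac{\alpha_0+\alpha_1}{2\alpha_0}>\tfrac12$, so $-\tfrac{\alpha_0+\alpha_1}{2\alpha_0}\notin[-\tfrac12,\tfrac12)$ and the first factor is invertible on $H^s(\Gamma)$, $s\in[-1,1]$, by Theorem~\ref{theo:inv}, while $\tfrac12 I+K_L$ is Fredholm of index zero by the same theorem; the product, hence $\mathcal{B}_1$, is therefore Fredholm of index zero.

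\textbf{Injectivity.} Suppose $\mathcal{B}_1 g=0$. Put $\phi:=-\alpha_1^{-1}Z_1 g$ and $w:=-DL_1(g)+SL_1(\phi)$, a radiative solution of $\Delta w+k_1^2 w=0$ in $\mathbb{R}^2\setminus\Gamma$. Using the jump/trace relations~\eqref{traces} together with $\mathcal{B}_1 g=0$, one checks that $\gamma_\Gamma^{D,\mathrm{int}}w=g$ and $\gamma_\Gamma^{D,\mathrm{ext}}w=0$; uniqueness for the exterior Dirichlet problem for the Helmholtz equation with the real wavenumber $k_1>0$ then forces $w\equiv 0$ in $\Omega_0$, so that $\gamma_\Gamma^{N,\mathrm{ext}}w=0$ and, by the Neumann jump relation, $\gamma_\Gamma^{N,\mathrm{int}}w=\phi=-\alpha_1^{-1}Z_1\gamma_\Gamma^{D,\mathrm{int}}w$. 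Hence $w|_{\Omega_1}$ solves the homogeneous interior Robin problem~\eqref{domain_eqs_t} with $j=1$. Since $Z_1=-2\alpha_0 N_{k_0+i\sigma_0}$ satisfies $\Im\int_\Gamma Z_1\varphi\,\overline\varphi\,ds=-4\alpha_0 k_0\sigma_0\,\|DL_{\Gamma,k_0+i\sigma_0}\varphi\|_{L^2(\mathbb{R}^2)}^2<0$ for $\varphi\ne 0$ --- a Green's-identity computation exploiting $\Im\big((k_0+i\sigma_0)^2\big)=2k_0\sigma_0>0$ --- the condition~\eqref{eq:well-pos_t} holds and~\eqref{domain_eqs_t} is well posed, whence $w\equiv 0$ in $\Omega_1$ and $g=\gamma_\Gamma^{D,\mathrm{int}}w=0$.

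The main obstacle is the Lipschitz Fredholm step: propagating the reduction modulo compacts through the whole range $s\in[-1,1]$ requires care with the mapping properties of the wavenumber-difference operators and with duality and interpolation, and --- most importantly --- one must recognize the factorization above so that Theorem~\ref{theo:inv} applies to one factor while yielding Fredholmness of index zero for the other. By comparison the $C^2$ case collapses to a scalar multiple of the identity, and the injectivity argument is routine once the definite sign of $\Im\int_\Gamma Z_1\varphi\,\overline\varphi\,ds$ (the coercivity of the complexified hypersingular operator) is in place.
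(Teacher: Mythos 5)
Your proposal is correct and follows essentially the same route as the paper's proof: the same reduction of $\mathcal{B}_1$ modulo compact (wavenumber-difference) operators to $\tfrac{\alpha_0+\alpha_1}{2\alpha_1}I+K_L-2\tfrac{\alpha_0}{\alpha_1}K_L^2$, the same factorization into $\bigl(\tfrac12 I+K_L\bigr)\bigl(K_L-\tfrac{\alpha_0+\alpha_1}{2\alpha_0}I\bigr)$ handled via Theorem~\ref{theo:inv}, and the same injectivity argument via the combined potential $w$, exterior Dirichlet uniqueness, and the sign of $\Im\int_\Gamma N_{k_0+i\sigma_0}\psi\,\overline\psi\,ds$. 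The only cosmetic differences are that you route the $C^2$ case through the Calder\'on identity at the complex wavenumber rather than through the smoothing of $K_L$, and you package the final Green's-identity step as uniqueness for the interior Robin problem.
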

\begin{proof}
  We will start by establishing the Fredholm property of $\mathcal{B}_1$ in the case of Lipschitz $\Gamma$ as the arguments are slightly more involved in this case. From Calder\'on identities we have that $\mathcal B^1$ can be expressed as 
 \begin{eqnarray*}
    \mathcal{B}_1&=&\frac{1}{2}I+K_1-2\frac{\alpha_0}{\alpha_1}S_1N_{k_0+i\sigma_0}\\
    &=&\frac{1}{2}I+K_L+\frac{\alpha_0}{2\alpha_1}I-2\frac{\alpha_0}{\alpha_1}K^2_L+\widetilde{B}_1,
  \end{eqnarray*}
where in what follows the BIOs with subscript $L$ denote the BIO corresponding to the Laplace equation, and
 \begin{eqnarray*}
    \widetilde{\mathcal{B}}_1&:=&(K_1-K_L)-2\frac{\alpha_0}{\alpha_1}S_1(N_{k_0+i\sigma_0}-N_L)+2\frac{\alpha_0}{\alpha_1}(S_1-S_L)N_L.
  \end{eqnarray*}
  
  Using the mapping properties recounted in Theorem~\ref{mapping} it follows immediately that the operator $\widetilde{\mathcal{B}}_1:L^2(\Gamma)\to H^1(\Gamma)$ and thus it is a compact operator in $L^2(\Gamma)$. On the other hand, we can establish the following identity
  \[
  \widetilde{\mathcal{B}}_2:=\frac{1}{2}I+K_L+\frac{\alpha_0}{2\alpha_1}I-2\frac{\alpha_0}{\alpha_1}K^2_L=-2\frac{\alpha_0}{\alpha_1}\left(\frac{1}{2}I+K_L\right)\left(-\frac{\alpha_0+\alpha_1}{2\alpha_0}+K_L\right)
  \]
  and thus the operator $\widetilde{\mathcal{B}}_2$ is the product of an operator that is Fredholm of index zero and an invertible operator (indeed, since $\frac{\alpha_0+\alpha_1}{2\alpha_0}>\frac{1}{2}$, we can apply the results in Theorem~\ref{theo:inv}), and hence $\widetilde{\mathcal{B}}_2$ is itself Fredholm of index zero in $L^2(\Gamma)$. Consequently, the operator $\mathcal{B}_1$ is a compact perturbation of a Fredholm operator of index zero in $L^2(\Gamma)$. In the case when $\Gamma$ is $C^2$, we use the decompositions above and we take advantage of the increased regularity of the double layer operators $K_L$ recounted in Theorem~\ref{mappingS} to establish
  \begin{equation}
    \mathcal{B}_1=\frac{\alpha_0+\alpha_1}{2\alpha_1}I+\widetilde{\mathcal{B}_3}
    \end{equation}
where the operator $\widetilde{\mathcal{B}}_3:L^2(\Gamma)\to H^1(\Gamma)$, and thus it is compact in $L^2(\Gamma)$. Hence, the operator $\mathcal{B}_1$ is a compact perturbation of a multiple of the identity operator in the space $L^2(\Gamma)$ in the case when $\Gamma$ is $C^2$. 

  The conclusion of the theorem follows once we establish the injectivity of the operator $\mathcal{B}_1$. The arguments are identical for both cases of boundary $\Gamma$ considered. Let $\psi\in\ Ker(\mathcal{B}_1)$ and let us define
  \[
  w:=DL_1\psi-2\frac{\alpha_0}{\alpha_1}SL_1[N_{k_0+i\sigma_0}]\psi,\qquad {\rm in}\ \mathbb{R}^2\setminus\Gamma.
  \]
  It follows that $\gamma_\Gamma^{D,ext}w=0$ and hence, from the uniqueness results for the exterior Dirichlet problem (cf.~\cite[Theorem 3.21]{KressColton}),  we obtain that  $w=0$ in $\Omega_0$. Using relations~\eqref{traces} we derive
  \[
  \gamma_\Gamma^{D,int}w=-\psi\qquad \gamma_\Gamma^{N,int}w=-2\frac{\alpha_0}{\alpha_1}N_{k_0+i\sigma_0}\psi.
  \]
  Using Green's identities we obtain
  \[
  \int_{\Omega_1}(|\nabla w|^2-k_1^2 w)dx = 2\frac{\alpha_0}{\alpha_1}\int_\Gamma (N_{k_0+i\sigma_0}\psi)\ \overline{\psi}\ ds.
  \]
  Using the fact that~\cite{turc2}
  \[
  \Im\int_\Gamma (N_{k_0+i\sigma_0}\psi)\ \overline{\psi}\ ds>0,\quad \psi\neq 0
  \]
  we obtain that $\psi=0$ which conclude the proof of the theorem in the space $L^2(\Gamma)=H^0(\Gamma)$. Clearly, the arguments of the proof can be repeated verbatim in the Sobolev spaces $H^s(\Gamma)$ for all $s\in[-1,0)$ in the case when $\Gamma$ is Lipschitz. The result in the remaining Sobolev spaces $H^s(\Gamma),\ s\in(0,1]$ follows then from duality arguments. Similar arguments hold in the case when $\Gamma$ is $C^2$.
\end{proof}

Once the invertibility of the operator $\mathcal{B}_1$ is established, we immediately obtain a representation of the corresponding RtR operator
\begin{equation}\label{eq:S1_B1}
  \mathcal{S}^1=I-\alpha_1^{-1}(Z_0+Z_1)\mathcal{B}_1^{-1}S_1.
  \end{equation}
The result established in Theorem~\ref{eq:inv_1} remains valid in the case of impedance operators $Z_1^a$, yet there are certain differences that we will comment on in the proof of Theorem~\ref{thm:wp_DDM_a}. In the case when $\Gamma$ is $C^2$, one can establish the compactness of the difference operator $N_{k_0+i\sigma_0}-PS(N_{k_0+i\sigma_0})$ in the space $H^1(\Gamma)$~\cite{turc2}, and the conclusion of Theorem~\ref{eq:inv_1} remains valid in the case of impedance operator $Z_1^{PS}$. Whether the aforementioned compactness property of the difference operator holds in the case of Lipschitz curves $\Gamma$ is an open question. We note that the arguments in the proof of Theorem~\ref{eq:inv_1} go through in the case of the exterior domain $\Omega_0$ provided that $k_0$ is not an eigenvalue of the Laplacean with Dirichlet boundary conditions in the domain $\Omega_1$. However, the well-posedness of the formulation in Theorem~\ref{eq:inv_1} cannot be establish for all positive wavenumbers $k_0$. This is not altogether surprising, as we have applied only Dirichlet traces to the Green's identities in order to derive formulations~\eqref{eq:int_D}. If we combine the application of Dirichlet and Neumann traces to Green's identities, the latter preconditioned on the left by suitable regularizing operators~\cite{turc2016well} we can derive a well-posed {\em direct} boundary integral equation of the second kind for the solution of both interior and exterior impedance boundary value Helmholtz problems. These formulations are expressed in the form
\begin{eqnarray}\label{eq:CFIER2}
\mathcal{A}_{j}(u_j|_\Gamma)&=&(S_j+S_{\kappa_j}-2S_{\kappa_j} K_j^\top)\varphi_j,\quad \kappa_j=k_j+i\sigma_j,\ \sigma_j>0,\nonumber\\
\mathcal{A}_{j}&:=&\frac{1}{2}I-2S_{\kappa_j} N_j+ \alpha_j^{-1}S_{\kappa_j}Z_j-2 \alpha_j^{-1}S_{\kappa_j} K_j^\top Z_j +K_j+\alpha_j^{-1}S_jZ_j. 
\end{eqnarray}
It is a straightforward matter~\cite{turc2016well} to show that in the case when $\Gamma$ is Lipschitz one can use the decomposition:
\begin{equation}\label{eq:A_comp}
  \mathcal{A}_j=\alpha_j^{-1}(\alpha_0+\alpha_1)I+\alpha_j^{-1}(\alpha_j-\alpha_{j+1})K_L-2\alpha_j^{-1}(\alpha_j+2\alpha_{j+1})K_L^2+4\alpha_j^{-1}\alpha_{j+1}K_L^3+\widetilde{\mathcal{A}_j}
\end{equation}
where the operators $\widetilde{\mathcal{A}_j}:L^2(\Gamma)\to H^1(\Gamma)$, and hence $\widetilde{\mathcal{A}_j}:L^2(\Gamma)\to L^2(\Gamma)$ are compact for $j=0,1$, and $j+1=j+1 (\!\!\!\mod 2)$. In the case when $\Gamma$ is $C^2$, the decomposition can be simplified in the form
\begin{equation}\label{eq:A_compS}
  \mathcal{A}_j=\alpha_j^{-1}(\alpha_0+\alpha_1)I+\widetilde{\mathcal{A}_j^{reg}},
\end{equation}
where the operators $\widetilde{\mathcal{A}_j^{reg}}:L^2(\Gamma)\to H^1(\Gamma)$, and thus are compact in $L^2(\Gamma)$ for $j=0,1$. In both instances the RtR operators $\mathcal{S}^j$ can be expressed as
\begin{equation}\label{eq:SjBI}
  \mathcal{S}^j=I-\alpha_j^{-1}(Z_0+Z_1)\mathcal{A}_j^{-1}(S_j+S_{\kappa_j}-2S_{\kappa_j} K_j^\top),\ j=0,1.
\end{equation}

 Another possibility to derive robust BIE formulations for the solution of impedance boundary value problems with impedance operators $Z_j$, $Z_j^{PS}$, and $Z_j^a$ was proposed in~\cite{steinbach2011stable}. This approach consists of applying both Dirichlet and Neumann traces to Green's identities in order to derive a system of boundary integral equations whose unknowns are the Cauchy data on the boundary $\Gamma$. Besides their simplicity, these formulations have the advantage of being well-posed for all three choices of impedance operators above and Lipschitz $\Gamma$. We start our presentation with the case of the bounded domain $\Omega_1$. Applying the interior Dirichlet and Neumann traces to Green's identity in the domain $\Omega_1$ we obtain
\begin{eqnarray*}
  \left(\frac{1}{2}I + K_1\right)u_1|_\Gamma -S_1\partial_{n_1}u_1|_\Gamma&=&0,\\
  -N_1u_1|_\Gamma+\left(-\frac{1}{2}I+K_1^\top\right)\partial_{n_1}u_1|_\Gamma&=&0.
\end{eqnarray*}
Adding to the second equation above the impedance boundary condition we derive the following system of BIE
\begin{equation}\label{eq:Steinbach}
  \begin{bmatrix}-\alpha_1^{-1}Z_1+N_1 & -\frac{1}{2}I-K_1^\top\\ -\frac{1}{2}I - K_1 & S_1 \end{bmatrix} \begin{bmatrix}u_1|_\Gamma\\\partial_{n_1}u_1|_\Gamma\end{bmatrix} =\begin{bmatrix}\varphi_1\\0 \end{bmatrix}.
\end{equation}
The well-posedness of the formulation~\eqref{eq:Steinbach} can be established by making use of the bilinear form 
\[
\langle (f,\varphi),(g,\psi) \rangle:= \int_\Gamma fg+\int_\Gamma \varphi\psi,\quad (f,\varphi)\in H^{1/2}(\Gamma)\times H^{-1/2}(\Gamma),\ (g,\psi)\in H^{-1/2}(\Gamma)\times H^{1/2}(\Gamma)
\] 
and following the same arguments presented in~\cite{steinbach2011stable}. We thus arrive at the following result whose proof can be obtained from a simple adaptation of the proof of Theorem 5.25 in~\cite{windisch2011boundary}:
\begin{theorem}\label{th:inv_3}
  The operator
  \[
  \mathcal{C}_1:=\begin{bmatrix}-\alpha_1^{-1}Z_1+N_1 & -\frac{1}{2}I-K_1^\top\\ -\frac{1}{2}I - K_1 & S_1 \end{bmatrix},\qquad \mathcal{C}_1: H^{1/2}(\Gamma)\times H^{-1/2}(\Gamma)\to H^{-1/2}(\Gamma)\times H^{1/2}(\Gamma)
  \]
  is invertible and its inverse is continous when $\Gamma$ is Lipschitz.
\end{theorem}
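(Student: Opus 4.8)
The plan is to prove that $\mathcal{C}_1$ is a Fredholm operator of index zero and that it is injective; together these give that $\mathcal{C}_1$ is bijective, and the continuity of $\mathcal{C}_1^{-1}$ then follows from the open mapping theorem. The continuity of $\mathcal{C}_1$ between the indicated spaces is immediate from Theorem~\ref{mapping}, since $S_1:H^{-1/2}(\Gamma)\to H^{1/2}(\Gamma)$, $K_1:H^{1/2}(\Gamma)\to H^{1/2}(\Gamma)$, $K_1^\top:H^{-1/2}(\Gamma)\to H^{-1/2}(\Gamma)$, $N_1:H^{1/2}(\Gamma)\to H^{-1/2}(\Gamma)$, and $Z_1:H^{1/2}(\Gamma)\to H^{-1/2}(\Gamma)$. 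For the Fredholm property I would follow the variational argument of Steinbach and Windisch~\cite{steinbach2011stable,windisch2011boundary}: one tests $\mathcal{C}_1(u,p)$ against a pair $(v,q)$ in the duality pairing $\langle\cdot,\cdot\rangle$ and verifies that the resulting bilinear form satisfies a G\aa rding inequality on $H^{1/2}(\Gamma)\times H^{-1/2}(\Gamma)$. The two off-diagonal blocks $-\frac{1}{2}I-K_1^\top$ and $-\frac{1}{2}I-K_1$ are mutual $L^2(\Gamma)$-adjoints, so upon choosing the test pair to be $(u,p)$ with an appropriate relative sign between its two components their contribution to the quadratic form cancels; what remains is controlled, modulo compact perturbations extracted via Theorem~\ref{mapping} and Rellich's theorem (the differences $N_1-N_L$, $K_1-K_L$, $S_1-S_L$ being compact in the relevant spaces on a Lipschitz curve), by the non-negative quadratic forms of the Laplace single layer operator $S_L$ on $H^{-1/2}(\Gamma)$ and of the Laplace hypersingular operator $N_L$ on $H^{1/2}(\Gamma)$, together with the strictly sign-definite imaginary part of the form associated with $Z_1$ --- indeed $\Im\int_\Gamma (Z_1\varphi)\overline{\varphi}\,ds=-2\alpha_0\,\Im\int_\Gamma (N_{k_0+i\sigma_0}\varphi)\overline{\varphi}\,ds<0$ by~\cite{turc2}, which is precisely the sign required in~\eqref{eq:well-pos_t} --- the latter supplying control on the one-dimensional space of constants on which $N_L$ degenerates. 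This gives that $\mathcal{C}_1$ is Fredholm of index zero; the same reasoning applies for the impedance operators $Z_1^{PS}$ and $Z_1^a$ as well, being simplest when $Z_1^a$, a scalar multiple of the identity, is used.

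For the injectivity, suppose $\mathcal{C}_1(u,p)=(0,0)$ with $(u,p)\in H^{1/2}(\Gamma)\times H^{-1/2}(\Gamma)$ and set $w:=-DL_{\Gamma,1}u+SL_{\Gamma,1}p$ in $\mathbb{R}^2\setminus\Gamma$, so that $w$ solves the Helmholtz equation with wavenumber $k_1$ in $\Omega_1$ and is radiative in $\Omega_0$. The second row of the homogeneous system reads $S_1 p=\left(\frac{1}{2}I+K_1\right)u$, and combining this with the trace relations~\eqref{traces} yields $\gamma_\Gamma^{D,\mathrm{int}}w=\left(\frac{1}{2}I-K_1\right)u+S_1 p=u$; the first row then gives $\gamma_\Gamma^{N,\mathrm{int}}w=-N_1 u+\left(\frac{1}{2}I+K_1^\top\right)p=-\alpha_1^{-1}Z_1 u=-\alpha_1^{-1}Z_1\,\gamma_\Gamma^{D,\mathrm{int}}w$, that is, $\alpha_1\partial_{n_1}w+Z_1 w=0$ on $\Gamma$. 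Since $\Im\int_\Gamma(Z_1\varphi)\overline{\varphi}\,ds<0$, the homogeneous interior impedance problem admits only the trivial solution (cf.~\eqref{eq:well-pos_t} and~\cite[Theorem 3.37]{KressColton}), so $w\equiv 0$ in $\Omega_1$, whence $u=\gamma_\Gamma^{D,\mathrm{int}}w=0$ and therefore $w=SL_{\Gamma,1}p\equiv 0$ in $\Omega_1$. In particular $S_1 p=\gamma_\Gamma^{D,\mathrm{int}}SL_{\Gamma,1}p=0$, so by uniqueness for the exterior Dirichlet problem with wavenumber $k_1$ we also obtain $SL_{\Gamma,1}p\equiv 0$ in $\Omega_0$; the jump relation $\gamma_\Gamma^{N,\mathrm{int}}SL_{\Gamma,1}p-\gamma_\Gamma^{N,\mathrm{ext}}SL_{\Gamma,1}p=p$ then forces $p=0$. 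Hence $\mathcal{C}_1$ is injective, and being Fredholm of index zero it is bijective with continuous inverse.

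I expect the Fredholm step, rather than the injectivity, to be the main obstacle: one must set up the variational framework so that the cancellation of the adjoint off-diagonal blocks, the coercivity up to compact perturbations of the $H^{1/2}(\Gamma)$- and $H^{-1/2}(\Gamma)$-blocks (which enter the quadratic form with opposite signs), and the definite imaginary part of the $Z_1$ contribution combine into a genuine G\aa rding inequality on the product space; this is exactly the sign bookkeeping involved in adapting~\cite[Theorem 5.25]{windisch2011boundary}. Everything else --- the Calder\'on trace identities~\eqref{traces}, the compactness of $N_1-N_L$, $K_1-K_L$, $S_1-S_L$ on Lipschitz $\Gamma$ from Theorem~\ref{mapping}, and the uniqueness statements for the interior impedance and exterior Dirichlet Helmholtz problems --- is standard.
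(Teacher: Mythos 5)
Your proposal is correct and follows essentially the same route as the paper, which itself only invokes the bilinear form $\langle(f,\varphi),(g,\psi)\rangle=\int_\Gamma fg+\int_\Gamma\varphi\psi$ and defers the Fredholm/G\aa rding argument to an adaptation of Theorem 5.25 of~\cite{windisch2011boundary} and the arguments of~\cite{steinbach2011stable}. Your injectivity argument via $w=-DL_{\Gamma,1}u+SL_{\Gamma,1}p$, the sign condition $\Im\int_\Gamma(Z_1\varphi)\overline{\varphi}\,ds<0$, and the jump relations is the standard completion of that cited proof and checks out.
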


The equivalent formulation~\eqref{eq:Steinbach} cannot be shown to be well-posed in the case of the analogous impedance boundary value problem in the the exterior domain $\Omega_0$, unless $k_0$ is not an eigenvalue of the Laplacean with Dirichlet boundary conditions in $\Omega_1$. The remedy is to consider the following system of integral equations
\begin{equation}\label{eq:Steinbach0}
  \begin{bmatrix}-\alpha_0^{-1}Z_0+N_0 & -\frac{1}{2}I-K_0^\top\\ \alpha_0^{-1}S_{k_0+i\sigma_0}Z_0-\frac{1}{2}I - K_0 & S_0 + S_{k_0+i\sigma_0} \end{bmatrix} \begin{bmatrix}u_0|_\Gamma\\\partial_{n_0}u_0|_\Gamma\end{bmatrix} =\begin{bmatrix}\varphi_0\\S_{k_0+i\sigma_0}\varphi_0\end{bmatrix}
\end{equation}
whose derivation is absolutely similar to that of equations~\eqref{eq:Steinbach} except that we add to both sides of the second equation in~\eqref{eq:Steinbach} the identity
\[
\alpha_0^{-1}S_{k_0+i\sigma_0}Z_0u_0+S_{k_0+i\sigma_0}\partial_{n_0}u_0=S_{k_0+i\sigma_0}\varphi_0.
\]
In that case we have the following result whose proof follows from the same arguments as in the proof of Lemma 5.29 in~\cite{windisch2011boundary}:
\begin{theorem}\label{th:inv_4}
  The operator
  \[
  \mathcal{C}_0:=\begin{bmatrix}-\alpha_0^{-1}Z_0+N_0 & -\frac{1}{2}I-K_0^\top\\ \alpha_0^{-1}S_{k_0+i\sigma_0}Z_0-\frac{1}{2}I - K_0 & S_0 + S_{k_0+i\sigma_0} \end{bmatrix}, 
    \]
with the mapping property  $\mathcal{C}_0: H^{1/2}(\Gamma)\times H^{-1/2}(\Gamma)\to H^{-1/2}(\Gamma)\times H^{1/2}(\Gamma)$ is invertible with continuous inverse when $\Gamma$ is Lipschitz. 
\end{theorem}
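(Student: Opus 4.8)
The plan is to establish that $\mathcal{C}_0$ is a Fredholm operator of index zero and that it is injective; invertibility then follows from the Fredholm alternative, and continuity of the inverse from the bounded inverse theorem. The Fredholm property is obtained exactly as in the proof of Theorem~\ref{th:inv_3} and of Lemma~5.29 in~\cite{windisch2011boundary}, and I expect this to be the most technical part of the argument. Setting $\kappa_0:=k_0+i\sigma_0$, one uses the Calder\'on identities together with the mapping and compactness properties collected in Theorems~\ref{mappingS} and~\ref{mapping} to write $\mathcal{C}_0$, modulo a compact operator, as a block operator whose diagonal entries $-\alpha_0^{-1}Z_0+N_0$ and $S_0+S_{\kappa_0}$ are $H^{1/2}(\Gamma)$- and $H^{-1/2}(\Gamma)$-elliptic up to compact and finite-rank perturbations (combining the ellipticity of the Laplace operators $N_L$ and $S_L$ with the sign-definiteness of the imaginary parts of the quadratic forms of $N_{k_1+i\sigma_1}$, hence of $Z_0$, and of $S_{\kappa_0}$), whose off-diagonal couplings $-\tfrac12 I-K_0^\top$ and $-\tfrac12 I-K_0$ are, modulo compact operators, adjoint to each other with respect to $\langle\cdot,\cdot\rangle$, and whose remaining summand $\alpha_0^{-1}S_{\kappa_0}Z_0$ in the $(2,1)$-block is absorbed using the smoothing of $S_{\kappa_0}$. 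This structure yields a generalized G\aa rding inequality for $\mathcal{C}_0$ with respect to the pairing $\langle\cdot,\cdot\rangle$, hence the Fredholm property; the details carry over with only cosmetic changes from~\cite{steinbach2011stable,windisch2011boundary}.

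For the injectivity, let $(f,\varphi)\in\ker\mathcal{C}_0$, put $\theta:=\alpha_0^{-1}Z_0 f+\varphi$, and introduce $u:=-DL_0(f)+SL_0(\varphi)$, the solution of $\Delta u+k_0^2 u=0$ in $\mathbb{R}^2\setminus\Gamma$ that is radiating in $\Omega_0$. The first row of the homogeneous system~\eqref{eq:Steinbach0} reads $N_0 f-(\tfrac12 I+K_0^\top)\varphi=\alpha_0^{-1}Z_0 f$, which together with the trace identities~\eqref{traces} gives $\gamma_\Gamma^{N,\mathrm{ext}}u=-\theta$; the second row of~\eqref{eq:Steinbach0}, rewritten as $-(\tfrac12 I+K_0)f+S_0\varphi+S_{\kappa_0}\theta=0$, gives $\gamma_\Gamma^{D,\mathrm{ext}}u=-S_{\kappa_0}\theta$, so that $\gamma_\Gamma^{D,\mathrm{ext}}u=S_{\kappa_0}\gamma_\Gamma^{N,\mathrm{ext}}u$ on $\Gamma$. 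Applying Green's first identity in $\Omega_0\cap B_R$ and letting $R\to\infty$ with the Sommerfeld condition yields $\Im\int_\Gamma\overline{\gamma_\Gamma^{D,\mathrm{ext}}u}\,\gamma_\Gamma^{N,\mathrm{ext}}u\,ds\ge 0$, that is, $\Im\int_\Gamma(S_{\kappa_0}\theta)\,\overline{\theta}\,ds\le 0$. On the other hand, applying Green's identity across $\Gamma$ to the single-layer potential $SL_{\kappa_0}(\theta)$ (which decays exponentially since $\Im\kappa_0>0$) and using $\Im(\kappa_0^2)=2k_0\sigma_0>0$ yields $\Im\int_\Gamma(S_{\kappa_0}\theta)\,\overline{\theta}\,ds\ge 0$, with equality only if $\theta=0$. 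Hence $\theta=0$, so that $\gamma_\Gamma^{D,\mathrm{ext}}u=\gamma_\Gamma^{N,\mathrm{ext}}u=0$, $u\equiv 0$ in $\Omega_0$, and $\varphi=-\alpha_0^{-1}Z_0 f$.

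It remains to show that $f=0$. With $\theta=0$, the jump relations~\eqref{traces} and again the first row of~\eqref{eq:Steinbach0} give the interior Cauchy data $\gamma_\Gamma^{D,\mathrm{int}}u=f$ and $\gamma_\Gamma^{N,\mathrm{int}}u=-\alpha_0^{-1}Z_0 f$. Green's first identity in the bounded domain $\Omega_1$ then gives $\int_{\Omega_1}(|\nabla u|^2-k_0^2|u|^2)\,dx=-\alpha_0^{-1}\int_\Gamma(Z_0 f)\,\overline{f}\,ds$; since the left-hand side is real, taking imaginary parts and invoking the coercivity condition~\eqref{eq:well-pos_t} --- which holds for each of the admissible transmission operators $Z_0$, $Z_0^{PS}$ and $Z_0^a$ --- we obtain $\Im\int_\Gamma(Z_0 f)\,\overline{f}\,ds=0$ and hence $f=0$, so that also $\varphi=0$. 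This proves $\ker\mathcal{C}_0=\{0\}$, which combined with the Fredholm property establishes the invertibility of $\mathcal{C}_0$. We remark that the single-layer correction $S_{\kappa_0}$ incorporated in~\eqref{eq:Steinbach0} is exactly what makes the above argument valid for \emph{every} positive wavenumber $k_0$: it removes the spurious interior Dirichlet eigenvalues of $\Omega_1$ that otherwise obstruct the well-posedness of the unmodified formulation~\eqref{eq:Steinbach}.
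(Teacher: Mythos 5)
Your proposal is correct and follows essentially the same route as the paper, which itself gives no details beyond deferring to the arguments of Lemma~5.29 in~\cite{windisch2011boundary}: Fredholmness of index zero via a generalized G\aa rding inequality for the block operator, plus injectivity. Your injectivity argument is a welcome addition the paper omits, and I checked that it is consistent with the trace conventions~\eqref{traces}: the two rows of the homogeneous system do yield $\gamma_\Gamma^{N,\mathrm{ext}}u=-\theta$ and $\gamma_\Gamma^{D,\mathrm{ext}}u=-S_{\kappa_0}\theta$ for $u=-DL_0(f)+SL_0(\varphi)$ and $\theta=\alpha_0^{-1}Z_0f+\varphi$, and the two opposing sign inequalities for $\Im\int_\Gamma (S_{\kappa_0}\theta)\,\overline{\theta}\,ds$ correctly force $\theta=0$, after which the interior energy identity and the coercivity~\eqref{eq:well-pos_t} give $f=0$.
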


Again, the result established in Theorem~\ref{th:inv_3} remains in the case of impedance operators $Z_1^a$ as well as $Z_1^{PS}$. Indeed, the key ingredient in establishing that result is the coercivity of the principal part of the operators $\mathcal{C}_1$, coercivity which is enjoyed by both operators $Z_1^a$ and $Z_1^{PS}$. On the other hand, the result in Theorem~\ref{th:inv_4} remains valid in the case of impedance operators $Z_0^a$ and a simple modification renders it valid in the case of impedance operators $Z_0^{PS}$ as well. This modification consists of replacing the single layer operators $S_{k_0+i\sigma_0}$ by Fourier multipliers whose principal symbols are the reciprocal of $p^N(\xi, k_0+i\sigma_0)$. Clearly, the results established in Theorem~\ref{th:inv_3} and Theorem~\ref{th:inv_4} remain valid in the case when $\Gamma$ is $C^2$.

Having discussed various strategies to derive robust BIE formulations of RtR operators $\mathcal{S}^j$, $j=0,1$, we next turn our attention to the well-posedness of the DDM formulation~\eqref{ddm_t}. 

\subsection{Well-posedness of the DDM formulation~\eqref{ddm_t}\label{DDMw}}

The well-posedness of the DDM formulation~\eqref{ddm_t} in the space $L^2(\Gamma)$ (and all $H^s(\Gamma), s\in[-1,1]$ in the case when $\Gamma$ is Lipschitz and all $H^s(\Gamma), s\in[-3,3]$ in the case when $\Gamma$ is $C^2$) hinges on the invertibility of the operator
\[
I-\mathcal{S}^0\mathcal{S}^1:L^2(\Gamma)\to L^2(\Gamma)
\]
via the formula
\begin{equation}\label{inv_matrix_explicit}
  (I+\mathcal{S})^{-1}=\begin{bmatrix}
I+\mathcal{S}^1(I-\mathcal{S}^0\mathcal{S}^1)^{-1}\mathcal{S}^0 & -\mathcal{S}^1(I-\mathcal{S}^0\mathcal{S}^1)^{-1}\\
-(I-\mathcal{S}^0\mathcal{S}^1)^{-1}\mathcal{S}^0 &
(I-\mathcal{S}^0\mathcal{S}^1)^{-1}
\end{bmatrix}.
\end{equation}
The invertibility of the operator $I-\mathcal{S}^0\mathcal{S}^1$, in turn, can be established via Fredholm arguments. In the case of more regular boundaries $\Gamma$, the situation is somewhat simpler, since 
\begin{lemma}\label{lemma1}
The RtR operators $\mathcal{S}^j:L^2(\Gamma)\to L^2(\Gamma)$ corresponding to the impedance operators $Z_j$ and $Z_j^{PS}$, $j=0,1$, are compact when the boundary $\Gamma$ is $C^2$.
\end{lemma}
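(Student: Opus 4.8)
The plan is to use the boundary integral representation of the RtR operators derived in the preceding subsection, namely \eqref{eq:S1_B1} (and its exterior analogue \eqref{eq:SjBI}), which express
\[
\mathcal{S}^j = I - \alpha_j^{-1}(Z_0+Z_1)\mathcal{B}_j^{-1}(\cdots),
\]
and to exploit the increased smoothing enjoyed by the operators involved when $\Gamma$ is $C^2$. First I would record that by Theorem~\ref{eq:inv_1} the operators $\mathcal{B}_1^{-1}$ (respectively $\mathcal{A}_0^{-1}$ for the exterior subdomain) are bounded on $H^s(\Gamma)$ for all $s\in[-3,3]$, so the only thing that matters is the mapping behaviour of the remaining factors. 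The key observation is that $\mathcal{S}^j - I$ factors through the operator $Z_0+Z_1$ (composed on the right with a bounded operator), so it suffices to show that $Z_0+Z_1 : L^2(\Gamma)\to H^{1}(\Gamma)$ is bounded; the compact embedding $H^1(\Gamma)\hookrightarrow L^2(\Gamma)$ (valid since $\Gamma$ is $C^2$, hence compact) then yields compactness of $\mathcal{S}^j$ on $L^2(\Gamma)$.

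Second, I would verify the claimed smoothing of $Z_0+Z_1$. For the hypersingular choice \eqref{eq:calT_t} we have
\[
Z_0+Z_1 = -2\alpha_1 N_{\Gamma,k_1+i\sigma_1} - 2\alpha_0 N_{\Gamma,k_0+i\sigma_0},
\]
and although each $N_{\Gamma,\kappa_j}$ only maps $H^1(\Gamma)\to L^2(\Gamma)$, the crucial point is the identity rewriting
\[
Z_0+Z_1 = -2(\alpha_0+\alpha_1) N_{\Gamma,k_0+i\sigma_0} - 2\alpha_1\big(N_{\Gamma,k_1+i\sigma_1}-N_{\Gamma,k_0+i\sigma_0}\big),
\]
which is not quite smoothing on its own. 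Instead I would use the representation \eqref{eq:S1_B1}: on the right $\mathcal{B}_1^{-1}S_1$ already produces an extra degree of smoothing because $S_1:H^s(\Gamma)\to H^{s+1}(\Gamma)$ by Theorem~\ref{mappingS} and $\mathcal{B}_1^{-1}$ is bounded on every $H^s$, so $\mathcal{B}_1^{-1}S_1:L^2(\Gamma)\to H^1(\Gamma)$; then $N_{\Gamma,\kappa}:H^1(\Gamma)\to L^2(\Gamma)$ and $(Z_0+Z_1)\mathcal{B}_1^{-1}S_1:L^2(\Gamma)\to L^2(\Gamma)$ — which only shows boundedness, not compactness. To gain the missing order I would instead invoke the difference-of-hypersingular smoothing from Theorem~\ref{mappingS}: write $N_{\Gamma,\kappa_j} = N_{\Gamma,k_0} + (N_{\Gamma,\kappa_j}-N_{\Gamma,k_0})$ where the difference maps $H^0(\Gamma)\to H^1(\Gamma)$, and absorb the common $N_{\Gamma,k_0}$-type term using the explicit decomposition \eqref{eq:A_compS} of $\mathcal{A}_j$ (resp. the decomposition $\mathcal{B}_1 = \tfrac{\alpha_0+\alpha_1}{2\alpha_1}I+\widetilde{\mathcal{B}_3}$ with $\widetilde{\mathcal{B}_3}$ smoothing, from the proof of Theorem~\ref{eq:inv_1}), so that after substitution every surviving term in $\mathcal{S}^j-I$ maps $L^2(\Gamma)$ into $H^1(\Gamma)$.

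Third, for the principal-symbol choice $Z_j^{PS}$ of \eqref{eq:calPST_t}, I would reduce to the hypersingular case via the compactness of $N_{\Gamma,\kappa}-PS(N_{\Gamma,\kappa})$ in $H^1(\Gamma)$, which is stated in the text (following Theorem~\ref{eq:inv_1}, citing \cite{turc2}) to hold when $\Gamma$ is $C^2$. Writing $Z_j^{PS} = Z_j + (Z_j^{PS}-Z_j)$, the correction $Z_j^{PS}-Z_j$ is $-2\alpha$ times such a difference operator, hence compact $L^2(\Gamma)\to L^2(\Gamma)$ (indeed it maps into $H^1(\Gamma)$), and it enters $\mathcal{S}^j$ only through composition with bounded operators; so $\mathcal{S}^j$ for $Z_j^{PS}$ differs from $\mathcal{S}^j$ for $Z_j$ by a compact operator, and compactness transfers. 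The main obstacle is the second step: one must organize the algebra so that the unavoidable loss of one derivative in the hypersingular operator $N_{\Gamma,\kappa}$ is exactly compensated — this is where the $C^2$ regularity is essential (through the $H^{s}\to H^{s+1}$ smoothing of $S_\Gamma$ and the $H^0\to H^1$ smoothing of hypersingular differences in Theorem~\ref{mappingS}, neither of which survives for merely Lipschitz $\Gamma$), and one has to be careful that $\mathcal{B}_j^{-1}$ (or $\mathcal{A}_j^{-1}$), while bounded on all the relevant $H^s$, does not itself cost regularity. Once the bookkeeping shows $\mathcal{S}^j - I : L^2(\Gamma)\to H^1(\Gamma)$ is bounded, compactness on $L^2(\Gamma)$ is immediate from Rellich.
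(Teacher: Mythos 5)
There is a genuine gap here, and it is logical rather than merely technical: you set out to show that $\mathcal{S}^j-I$ maps $L^2(\Gamma)$ boundedly into $H^1(\Gamma)$ and then conclude that $\mathcal{S}^j$ is compact ``by Rellich''. But compactness of $\mathcal{S}^j-I$ would only make $\mathcal{S}^j$ a compact perturbation of the identity (hence Fredholm of index zero), never compact, because $I$ is not compact on the infinite-dimensional space $L^2(\Gamma)$. Since the lemma asserts that $\mathcal{S}^j$ \emph{itself} is compact, the identity in the representation $\mathcal{S}^j=I-\alpha_j^{-1}(Z_0+Z_1)\mathcal{A}_j^{-1}(S_j+S_{\kappa_j}-2S_{\kappa_j}K_j^\top)$ must be \emph{cancelled} by the leading part of the second term; your bookkeeping never produces that cancellation. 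Your opening reduction --- ``it suffices to show $Z_0+Z_1:L^2(\Gamma)\to H^1(\Gamma)$ is bounded'' --- is also false on its face (as you yourself half-concede): $N_{\Gamma,\kappa}$ maps $H^{s+1}(\Gamma)\to H^s(\Gamma)$, so $Z_0+Z_1$ loses a derivative rather than gaining one.

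The paper's proof supplies exactly the two ingredients you are missing. First it rewrites $\mathcal{S}^j$ as a conjugation, $\mathcal{S}^j=(Z_0+Z_1)\mathcal{A}_j^{-1}\mathcal{A}_j^1(Z_0+Z_1)^{-1}$ with $\mathcal{A}_j^1:=\mathcal{A}_j-\alpha_j^{-1}(S_j+S_{\kappa_j}-2S_{\kappa_j}K_j^\top)(Z_0+Z_1)$, so that the derivative lost by $Z_0+Z_1$ on the left is recovered by $(Z_0+Z_1)^{-1}$ on the right, and everything reduces to showing that the middle factor $\mathcal{A}_j^1$ gains one derivative. Second --- and this is the heart of the matter --- the identity part $\alpha_j^{-1}(\alpha_0+\alpha_1)I$ of $\mathcal{A}_j$ coming from the decomposition~\eqref{eq:A_compS} is cancelled against the leading part of $\alpha_j^{-1}(S_j+S_{\kappa_j})(Z_0+Z_1)$ by means of the Calder\'on identity $S_LN_L=-\tfrac{1}{4}I+K_L^2$: the term $4\alpha_j^{-1}(\alpha_0+\alpha_1)S_LN_L$ becomes $-\alpha_j^{-1}(\alpha_0+\alpha_1)I+4\alpha_j^{-1}(\alpha_0+\alpha_1)K_L^2$, the identities cancel, and the surviving $K_L^2$ term is smoothing because $K_L:H^s(\Gamma)\to H^{s+3}(\Gamma)$ on a $C^2$ curve (Theorem~\ref{mappingS}). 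Without this Calder\'on cancellation no amount of difference-of-kernels smoothing will get you past $\mathcal{S}^j=I+(\text{compact})$. Your third step, reducing $Z_j^{PS}$ to $Z_j$ via the compactness of $N_{\Gamma,\kappa}-PS(N_{\Gamma,\kappa})$, is in the right spirit and matches the paper's closing remark, but it rests on the base case that fails above.
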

\begin{proof}
We start from formula~\eqref{eq:SjBI} and we get
\begin{eqnarray*}
  \mathcal{S}^j&=&I-\alpha_j^{-1}(Z_0+Z_1)\mathcal{A}_j^{-1}(S_j+S_{\kappa_j}+2S_{\kappa_j}K_j^\top)=(Z_0+Z_1)\mathcal{A}_j^{-1}\mathcal{A}_j^1(Z_0+Z_1)^{-1},\\
  \mathcal{A}_j^1&:=&\mathcal{A}_j-\alpha_j^{-1}(S_j+S_{\kappa_j}+2S_{\kappa_j}K_j^\top)(Z_0+Z_1).
\end{eqnarray*}
A closer look into the operator $\mathcal{A}_j^1$ reveals via the decomposition~\eqref{eq:A_compS}
\begin{eqnarray*}
  \mathcal{A}_j^1&=&\alpha_j^{-1}(\alpha_0+\alpha_1)I+2\alpha_j^{-1}(S_j-S_L+S_{\kappa_j}-S_L)(\alpha_1N_{\kappa_1}+\alpha_0N_{\kappa_2})\nonumber\\
  &+&4\alpha_j^{-1}(\alpha_0+\alpha_1)S_LN_L+4\alpha_j^{-1}S_L(\alpha_1(N_{\kappa_1}-N_L)+\alpha_0(N_{\kappa_0}-N_L))\\
  &-&2\alpha_j^{-1}S_{\kappa_j}K_j^\top(Z_0+Z_1)+\widetilde{A_j^{reg}},
\end{eqnarray*}
which can be further simplified after using Calder\'on's identities
\begin{eqnarray*}
  \mathcal{A}_j^1&=&2\alpha_j^{-1}(S_j-S_L+S_{\kappa_j}-S_L)(\alpha_1N_{\kappa_1}+\alpha_0N_{\kappa_2})\\
  &+&4\alpha_j^{-1}(\alpha_0+\alpha_1)K_L^2+4\alpha_j^{-1}S_L(\alpha_1(N_{\kappa_1}-N_L)+\alpha_0(N_{\kappa_0}-N_L))\\
  &-&2\alpha_j^{-1}S_{\kappa_j}K_j^\top(Z_0+Z_1)+\widetilde{A_j^{reg}}.
\end{eqnarray*}

Clearly the operators $\mathcal{A}_j^1$ enjoy the mapping property $\mathcal{A}_j^1:L^2(\Gamma)\to H^1(\Gamma)$ and thus it can be seen that $\mathcal{S}^j:L^2(\Gamma)\to H^1(\Gamma)$, from which the claim of the lemma follows. Under the regularity assumption of the interface $\Gamma$, the arguments in the proof of the lemma carry over in the case of RtR operators corresponding to the impedance operators $Z_j^{PS}$, $j=0,1$.
\end{proof}

The result in Lemma~\ref{lemma1} is no longer valid in the case of Lipschitz interfaces $\Gamma$. To see this, we start from formula~\eqref{eq:S1_B1} and we get
  \begin{eqnarray*}
    \mathcal{S}^1&=&I-\alpha_1^{-1}(Z_0+Z_1)\mathcal{B}_1^{-1}S_1=(Z_0+Z_1)\mathcal{B}_1^{-1}\mathcal{B}_1^1(Z_0+Z_1)^{-1},\\
    \mathcal{B}_1^1&:=&\mathcal{B}_1-\alpha_1^{-1}S_1(Z_0+Z_1)=\mathcal{B}_1+2\alpha_1^{-1}S_1(\alpha_1N_{k1+i\sigma_1}+\alpha_0N_{k_0+i\sigma_0})\\
    &=&\mathcal{B}_1-\frac{1}{2}\alpha_1^{-1}(\alpha_0+\alpha_1)I + 2\alpha_1^{-1}(\alpha_0+\alpha_1)K_L^2+ \mathcal{B}_1^2,\\
    \mathcal{B}_1^2&:=&2\alpha_1^{-1}(\alpha_0+\alpha_1)(S_1-S_L)N_L+2\alpha_1^{-1}S_1(\alpha_1(N_{k1+i\sigma_1}-N_L)+\alpha_0(N_{k_0+i\sigma_0}-N_L)).
  \end{eqnarray*}
  We recall from the proof of Theorem~\ref{eq:inv_1} that the operator $\mathcal{B}_1$ was expressed in the form:
  \begin{eqnarray}
    \mathcal{B}_1&=&\mathcal{B}_{1,P}+\widetilde{\mathcal{B}}_1
  \end{eqnarray}
in terms of 
    \begin{eqnarray}
    \mathcal{B}_{1,P}&:=&\frac{1}{2}\alpha_1^{-1}(\alpha_0+\alpha_1)I+K_L-2\frac{\alpha_0}{\alpha_1}K^2_L
  \end{eqnarray}
and   the operator $\widetilde{\mathcal{B}_1}:L^2(\Gamma)\to H^1(\Gamma)$. Putting together these two representations we obtain
  \begin{eqnarray}
    \mathcal{B}_1^1=\mathcal{B}_{1,P}^1+\mathcal{B}_1^2+\widetilde{\mathcal{B}}_1\quad\mbox{where}\quad
    \mathcal{B}_{1,P}^1&:=&K_L+2K_L^2.\label{eq:b_1p}
  \end{eqnarray}
  Using the mapping properties recounted in Theorem~\ref{mapping} we see immediately that $\mathcal{B}_1^2:L^2(\Gamma)\to H^1(\Gamma)$. However, although both operators $\mathcal{B}_1^2$ and $\widetilde{\mathcal{B}}_1$ are compact in $L^2(\Gamma)$, the operator $\mathcal{B}_{1,P}^1$ is no longer compact in the same space, and hence the operator $\mathcal{B}_1^1$ is no longer compact in $L^2(\Gamma)$. Consequently, the RtR operator $\mathcal{S}^1$ is not compact either. Thus, one has to look deeper into the properties of the iteration operator $I-\mathcal{S}^0\mathcal{S}^1$. We thus have the following results:
\begin{theorem}\label{thm_Fredholm}
  In the case of Lipschitz interfaces $\Gamma$, the operators $I-\mathcal{S}^0\mathcal{S}^1$ are Fredholm of index zero in the space $L^2(\Gamma)$.
  \end{theorem}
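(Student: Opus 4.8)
The plan is to use the boundary--integral representations of the Robin--to--Robin maps to turn $I-\mathcal{S}^0\mathcal{S}^1$, modulo compact operators, into a polynomial in the Laplace double-layer operator $K_L$, and then to appeal to Theorem~\ref{theo:inv}. From \eqref{eq:S1_B1} one has $\mathcal{S}^1=(Z_0+Z_1)\mathcal{B}_1^{-1}\mathcal{B}_1^1(Z_0+Z_1)^{-1}$ with $\mathcal{B}_1^1=\mathcal{B}_1-\alpha_1^{-1}S_1(Z_0+Z_1)$, and from \eqref{eq:SjBI} one has $\mathcal{S}^0=(Z_0+Z_1)\mathcal{A}_0^{-1}\mathcal{A}_0^1(Z_0+Z_1)^{-1}$ with $\mathcal{A}_0^1=\mathcal{A}_0-\alpha_0^{-1}(S_0+S_{\kappa_0}-2S_{\kappa_0}K_0^\top)(Z_0+Z_1)$, so that
\[
I-\mathcal{S}^0\mathcal{S}^1=(Z_0+Z_1)\,\big(I-\mathcal{A}_0^{-1}\mathcal{A}_0^1\mathcal{B}_1^{-1}\mathcal{B}_1^1\big)\,(Z_0+Z_1)^{-1}.
\]
Since $Z_0+Z_1=-2\alpha_1N_{\kappa_1}-2\alpha_0N_{\kappa_0}$ is a first-order elliptic operator satisfying a G\aa rding inequality and injective (because $-\Im\int_\Gamma((Z_0+Z_1)\varphi)\,\overline{\varphi}\,ds=2\alpha_1\Im\int_\Gamma(N_{\kappa_1}\varphi)\,\overline{\varphi}\,ds+2\alpha_0\Im\int_\Gamma(N_{\kappa_0}\varphi)\,\overline{\varphi}\,ds>0$ for $\varphi\neq0$, cf.\ the proof of Theorem~\ref{eq:inv_1}), it is an isomorphism $H^{s+1}(\Gamma)\to H^s(\Gamma)$; in particular $Z_0+Z_1:H^1(\Gamma)\to L^2(\Gamma)$ is an isomorphism. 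It therefore suffices to show that $\mathcal{M}:=I-\mathcal{A}_0^{-1}\mathcal{A}_0^1\mathcal{B}_1^{-1}\mathcal{B}_1^1$ is Fredholm of index zero on $H^1(\Gamma)$, the spaces $H^s(\Gamma)$, $s\in[-1,1]$, being handled by the same conjugation on the Sobolev scale.

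The heart of the argument is to reduce $\mathcal{M}$ modulo compact operators on $H^1(\Gamma)$ to a polynomial in $K_L$. Using the Lipschitz mapping estimates of Theorem~\ref{mapping} (which render $S_j-S_L$, $K_j-K_L$, $K_j^\top-K_L^\top$, $N_{\kappa_j}-N_L$ and the remainders $\widetilde{\mathcal{A}_j}$, $\widetilde{\mathcal{B}}_1$, $\mathcal{B}_1^2$ compact on $H^1(\Gamma)$), together with the Laplace Calder\'on identities $S_LN_L=K_L^2-\tfrac14 I$ and $S_LK_L^\top=K_LS_L$, one obtains $\mathcal{B}_1\equiv\mathcal{B}_{1,P}$, $\mathcal{B}_1^1\equiv\mathcal{B}_{1,P}^1$, $\mathcal{A}_0\equiv\mathcal{A}_{0,P}$, $\mathcal{A}_0^1\equiv\mathcal{A}_{0,P}^1$ modulo compacts, where---invoking \eqref{eq:b_1p}, the proof of Theorem~\ref{eq:inv_1} and \eqref{eq:A_comp}, and factoring---
\[
\mathcal{B}_{1,P}=-\tfrac{2\alpha_0}{\alpha_1}\big(\tfrac12 I+K_L\big)\big(K_L-\tfrac{\alpha_0+\alpha_1}{2\alpha_0}I\big),\qquad \mathcal{B}_{1,P}^1=2K_L\big(\tfrac12 I+K_L\big),
\]
\[
\mathcal{A}_{0,P}=\tfrac{4\alpha_1}{\alpha_0}\big(\tfrac12 I+K_L\big)\big(K_L-I\big)\big(K_L-\tfrac{\alpha_0+\alpha_1}{2\alpha_1}I\big),\qquad \mathcal{A}_{0,P}^1=-4K_L\big(\tfrac12 I+K_L\big)\big(K_L-I\big).
\]
Because $\alpha_0,\alpha_1>0$ forces $\tfrac{\alpha_0+\alpha_1}{2\alpha_j}>\tfrac12$, each factor of $\mathcal{B}_{1,P}$ and of $\mathcal{A}_{0,P}$ is of the form $\lambda I+K_L$ with $\lambda\geq\tfrac12$ or $\lambda<-\tfrac12$, so $\mathcal{B}_{1,P}$ and $\mathcal{A}_{0,P}$ are invertible on $H^s(\Gamma)$, $s\in[-1,1]$, by Theorem~\ref{theo:inv}; since also $\mathcal{A}_0\equiv\mathcal{A}_{0,P}$, $\mathcal{B}_1\equiv\mathcal{B}_{1,P}$ modulo compacts, it follows that $\mathcal{A}_0^{-1}\equiv\mathcal{A}_{0,P}^{-1}$ and $\mathcal{B}_1^{-1}\equiv\mathcal{B}_{1,P}^{-1}$ modulo compacts.

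As $\mathcal{A}_{0,P},\mathcal{A}_{0,P}^1,\mathcal{B}_{1,P},\mathcal{B}_{1,P}^1$ are polynomials in the single operator $K_L$ they pairwise commute, whence modulo compacts
\[
\mathcal{M}\equiv I-(\mathcal{A}_{0,P}\mathcal{B}_{1,P})^{-1}\mathcal{A}_{0,P}^1\mathcal{B}_{1,P}^1=(\mathcal{A}_{0,P}\mathcal{B}_{1,P})^{-1}\,p(K_L),\qquad p(K_L):=\mathcal{A}_{0,P}\mathcal{B}_{1,P}-\mathcal{A}_{0,P}^1\mathcal{B}_{1,P}^1,
\]
and since $\mathcal{A}_{0,P}\mathcal{B}_{1,P}$ is invertible, $\mathcal{M}$ is Fredholm of index zero if and only if $p(K_L)$ is. A direct computation shows that the $K_L^2$-contribution from $\mathcal{A}_{0,P}^1\mathcal{B}_{1,P}^1$ is exactly cancelled by the one in $\mathcal{A}_{0,P}\mathcal{B}_{1,P}$ (equivalently $\tfrac{ab}{a+b}=\tfrac12$ for $a=\tfrac{\alpha_0+\alpha_1}{2\alpha_1}$, $b=\tfrac{\alpha_0+\alpha_1}{2\alpha_0}$), leaving
\[
p(K_L)=\frac{4(\alpha_0+\alpha_1)^2}{\alpha_0\alpha_1}\big(\tfrac12 I+K_L\big)^2\big(K_L-I\big)\big(K_L-\tfrac12 I\big).
\]
This cancellation is what makes the argument work: no bare factor $K_L$ survives, and by Theorem~\ref{theo:inv} the factors $\tfrac12 I+K_L$ and $K_L-I$ are invertible while $K_L-\tfrac12 I$ is Fredholm of index zero on $H^s(\Gamma)$, $s\in[-1,1]$. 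Hence $p(K_L)$, and with it $\mathcal{M}$, is Fredholm of index zero on $H^1(\Gamma)$, and conjugating back by the isomorphism $Z_0+Z_1$ yields the assertion on $L^2(\Gamma)$ (and on $H^s(\Gamma)$, $s\in[-1,1]$).

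The main obstacle is the bookkeeping in the middle step: organizing $\mathcal{A}_0$, $\mathcal{B}_1$ and the composite operators appearing in $\mathcal{A}_0^1$, $\mathcal{B}_1^1$ and $\mathcal{A}_0^{-1}\mathcal{A}_0^1\mathcal{B}_1^{-1}\mathcal{B}_1^1$ into a principal part that is a polynomial in $K_L$ plus a genuinely compact remainder requires careful, repeated use of the Lipschitz estimates of Theorem~\ref{mapping} and of the Laplace Calder\'on identities, and the conclusion then rests on the explicit algebra verifying that every factor appearing in $\mathcal{B}_{1,P}$, $\mathcal{A}_{0,P}$ and in $p$ is of the type covered by Theorem~\ref{theo:inv}---which uses the positivity of the $\alpha_j$ in an essential way. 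The same argument goes through for the transmission operators $Z_j^{PS}$ whenever the Fourier-multiplier construction is meaningful and $N_{\kappa_j}-PS(N_{\kappa_j})$ is compact (as when $\Gamma$ is $C^2$), since then $Z_0^{PS}+Z_1^{PS}$ is a first-order elliptic operator with the same principal part $-2(\alpha_0+\alpha_1)N_L$ as $Z_0+Z_1$.
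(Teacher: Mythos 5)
Your proof is correct and follows the paper's overall strategy---conjugate by $Z_0+Z_1$, reduce the resulting operator modulo compact perturbations to a polynomial in $K_L$, and invoke Theorem~\ref{theo:inv}---but it differs in two genuinely useful ways. First, you use the mixed representation $\mathcal{S}^0=(Z_0+Z_1)\mathcal{A}_0^{-1}\mathcal{A}_0^1(Z_0+Z_1)^{-1}$ together with $\mathcal{S}^1=(Z_0+Z_1)\mathcal{B}_1^{-1}\mathcal{B}_1^1(Z_0+Z_1)^{-1}$; since $\mathcal{A}_0$ is invertible for every $k_0>0$ and $\mathcal{B}_1$ is invertible unconditionally by Theorem~\ref{eq:inv_1}, this removes the paper's preliminary assumption that $k_0$ is not an interior Dirichlet eigenvalue and the second pass through the argument with the operators $\mathcal{A}_j$. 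Second, where the paper keeps the exact operators and first shows that the commutator $\mathcal{B}_1\mathcal{B}_1^1-\mathcal{B}_1^1\mathcal{B}_1$ is smoothing before regrouping, you replace every factor by its principal part at the outset and then use that polynomials in $K_L$ commute exactly; this is cleaner bookkeeping and arrives at the same kind of factored polynomial. Your algebra checks out: the factorizations of $\mathcal{B}_{1,P}$, $\mathcal{B}_{1,P}^1$ and $\mathcal{A}_{0,P}$ expand to the paper's expressions, the identity $ab/(a+b)=\tfrac12$ produces exactly the cancellation of the bare $K_L$ factor, and $p(K_L)$ is a product of operators $\lambda I+K_L$ with $\lambda\in\{\tfrac12,-1,-\tfrac12\}$, precisely the configuration covered by Theorem~\ref{theo:inv}. (Incidentally, your $\mathcal{A}_{0,P}^1=2K_L+2K_L^2-4K_L^3$, rather than the paper's $2K_L+4K_L^2-4K_L^3$, is what the Calder\'on identity $S_LN_L=K_L^2-\tfrac14 I$ actually yields; the discrepancy does not affect the conclusion.)

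Two steps deserve a sentence more of justification, though neither is a genuine gap. (i) You need $Z_0+Z_1:H^1(\Gamma)\to L^2(\Gamma)$ to be an isomorphism on a Lipschitz curve, not merely $H^{1/2}(\Gamma)\to H^{-1/2}(\Gamma)$ as the G\aa rding inequality directly provides; this follows from the fact that $N_L:H^1(\Gamma)\to L^2(\Gamma)$ is Fredholm of index zero on Lipschitz curves combined with the injectivity you establish, but it should be stated. (ii) You need the remainders $\widetilde{\mathcal{A}_0}$, $\widetilde{\mathcal{B}}_1$, $\mathcal{B}_1^2$ to be compact on $H^1(\Gamma)$, which is the top of the Sobolev scale available for Lipschitz $\Gamma$, so ``maps $L^2(\Gamma)$ into $H^1(\Gamma)$'' is not by itself sufficient; compactness on $H^1(\Gamma)$ does follow from Theorem~\ref{mapping} by inserting a compact Sobolev embedding into each smoothing factor (for instance $(S_1-S_L)N_L:H^1(\Gamma)\to H^{0}(\Gamma)\subset H^{-1}(\Gamma)\to H^1(\Gamma)$, with the middle inclusion compact), which is routine but worth making explicit. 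The paper avoids both issues by keeping the compact remainder at the $L^2(\Gamma)$ level and treating the conjugation as a composition of isomorphisms between the appropriate spaces; your version is equivalent once these two points are recorded.
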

  \begin{proof}
 We start with the assumption that $k_0$ is not a Laplace eigenvalue with Dirichlet boundary conditions in the domain $\Omega_1$. This allows us to use the representation of the RtR operators based on the operators $\mathcal{B}_j,j=0,1$, in which case the calculations are simpler. Using the splittings presented above we derive
  \begin{equation}\label{eq:f_diff_id}
  I-\mathcal{S}^0\mathcal{S}^1=(Z_0+Z_1)\mathcal{B}_0^{-1}(\mathcal{B}_0-\mathcal{B}_0^1\mathcal{B}_1^{-1}\mathcal{B}_1^1)(Z_0+Z_1)^{-1},
  \end{equation}
  where the operators $\mathcal{B}_0^1$ are defined analogously to the operators $\mathcal{B}_1^1$ by effecting similar decompositions to the operators $\mathcal{S}^0$. Clearly, we have that
  \begin{eqnarray*}
    \mathcal{B}_1\mathcal{B}_1^1=\mathcal{B}_{1,P}\mathcal{B}_{1,P}^1 +\mathcal{B}_R\quad\mbox{where}\quad
    \mathcal{B}_R:=\mathcal{B}_{1,P}(\mathcal{B}_1^2+\widetilde{\mathcal{B}}_1)+\widetilde{\mathcal{B}}_1\mathcal{B}_{1,P}^1,
  \end{eqnarray*}
  and
  \begin{eqnarray*}
   \mathcal{B}_1^1 \mathcal{B}_1=\mathcal{B}_{1,P}^1\mathcal{B}_{1,P}+\mathcal{B}_L\quad\mbox{where}\quad
    \mathcal{B}_L:=\mathcal{B}_{1,P}^1\widetilde{\mathcal{B}}_1+(\mathcal{B}_1^2+\widetilde{\mathcal{B}}_1)\widetilde{\mathcal{B}}_1.
  \end{eqnarray*}
  Since the operators $\mathcal{B}_{1,P}$ and $\mathcal{B}_{1,P}^1$ commute, it follows that
  \[
  \mathcal{B}_1\mathcal{B}_1^1-\mathcal{B}_1^1 \mathcal{B}_1=\mathcal{B}_R-\mathcal{B}_L:L^2(\Gamma)\to H^1(\Gamma).
  \]
  From the last identity we derive immediately
  \[
  \mathcal{B}_1^{-1}\mathcal{B}_1^1-\mathcal{B}_1^1 \mathcal{B}_1^{-1}=\mathcal{B}_1^{-1}(\mathcal{B}_L-\mathcal{B}_R)\mathcal{B}_1^{-1}:L^2(\Gamma)\to H^1(\Gamma).
  \]
  Using the last identity in equation~\eqref{eq:f_diff_id} we get that
  \[
  I-\mathcal{S}^0\mathcal{S}^1=(Z_0+Z_1)\mathcal{B}_0^{-1}(\mathcal{B}_0\mathcal{B}_1-\mathcal{B}_0^1\mathcal{B}_1^1)\mathcal{B}_1^{-1}(Z_0+Z_1)^{-1}+\mathcal{S}_R,
  \]
  where $\mathcal{S}_R:L^2(\Gamma)\to H^1(\Gamma)$. Using similar decompositions for the operator $\mathcal{B}_0$, a simple calculation delivers
  \begin{eqnarray*}
    \mathcal{B}_0\mathcal{B}_1-\mathcal{B}_0^1\mathcal{B}_1^1&=&\mathcal{B}_{0,P}\mathcal{B}_{1,P}-\mathcal{B}_{0,P}\mathcal{B}_{1,P}+\mathcal{B}^{reg},\\
    \mathcal{B}_{0,P}\mathcal{B}_{1,P}-\mathcal{B}_{0,P}\mathcal{B}_{1,P}&=&\frac{2(\alpha_0+\alpha_1)^2}{\alpha_0\alpha_1}\left(\frac{1}{2}I+K_L\right)^2\left(\frac{1}{2}I-K_L\right)
  \end{eqnarray*}
  and $\mathcal{B}^{reg}:L^2(\Gamma)\to H^1(\Gamma)$. Given that $\frac{1}{2}I+K_L$ is invertible in $L^2(\Gamma)$ and $\frac{1}{2}I-K_L$ is Fredholm of index zero in $L^2(\Gamma)$, it follows that $I-\mathcal{S}^0\mathcal{S}^1$ is also Fredholm of index zero in $L^2(\Gamma)$.

  The mechanics of the calculations above can be adapted to the case when the RtR operators $\mathcal{S}^j$ are represented via the operators $\mathcal{A}_j$. In this case, we make use of the splittings put forth in equations~\eqref{eq:A_comp} in the form:
  \begin{eqnarray*}
    \mathcal{A}_j&=&\mathcal{A}_{j,P}+\widetilde{\mathcal{A}}_j\\
    \mathcal{A}_{j,P}&:=&\alpha_j^{-1}(\alpha_j+\alpha_{j+1})I+\alpha_j^{-1}(\alpha_j-\alpha_{j+1})K_L-2\alpha_j^{-1}(\alpha_j+2\alpha_{j+1})K_L^2+4\alpha_j^{-1}\alpha_{j+1}K_L^3
  \end{eqnarray*}
  as well as
  \begin{eqnarray*}
    \mathcal{S}^j&=&(Z_0+Z_1)\mathcal{A}_j^{-1}\mathcal{A}_j^1(Z_0+Z_1)^{-1},\\
    \mathcal{A}_j^1&:=&\mathcal{A}_j-\alpha_j^{-1}(S_j+S_{\kappa_j}-2S_{\kappa_j}K_j^\top)(Z_0+Z_1)=\mathcal{A}_{j,P}+\mathcal{A}_j^{reg},\\
    \mathcal{A}_{j,P}&:=&2K_L+4K_L^2-4K_L^3,
  \end{eqnarray*}
  and $\mathcal{A}_j^{reg}:L^2(\Gamma)\to H^1(\Gamma)$. Just in the case of the calculations above pertaining to the use of operators $\mathcal{B}_j$, we can establish that
  \[
  I-\mathcal{S}^0\mathcal{S}^1=(Z_0+Z_1)\mathcal{A}_0^{-1}(\mathcal{A}_{0,P}\mathcal{A}_{1,P}-\mathcal{A}_{0,P}^1\mathcal{A}_{1,P}^1)\mathcal{A}_1^{-1}(Z_0+Z_1)^{-1}+\mathcal{S}^{reg}
  \]
  where $\mathcal{S}^{reg}:L^2(\Gamma)\to H^1(\Gamma)$. Now, we have that
  \[
  \mathcal{A}_{0,P}\mathcal{A}_{1,P}-\mathcal{A}_{0,P}^1\mathcal{A}_{1,P}^1=4\left(\frac{\alpha_0+\alpha_1}{\alpha_0\alpha_1}\right)(I-K_L)\left(\frac{1}{2}\ I+K_L\right),
  \]
  from which it follows that $I-\mathcal{S}^0\mathcal{S}^1$ is Fredholm of index zero in $L^2(\Gamma)$ for all real wavenumbers $k_j$, $j=0,1$. \end{proof}
  
We are now in the position to prove the main result:
\begin{theorem}\label{thm:wp_DDM}
  The DDM operators $I-\mathcal{S}^0\mathcal{S}^1:L^2(\Gamma)\to L^2(\Gamma)$ corresponding to the impedance operators $Z_j$ are invertible with continous inverses when the boundary $\Gamma$ is Lipschitz. In the case when the boundary $\Gamma$ is $C^2$, the DDM operators $I-\mathcal{S}^0\mathcal{S}^1:L^2(\Gamma)\to L^2(\Gamma)$ corresponding to the impedance operators $Z_j^{PS}$, $j=0,1$ are invertible with continous inverses.
\end{theorem}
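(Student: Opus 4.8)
The plan is to combine the Fredholm property already in hand with an injectivity argument that ultimately rests on uniqueness for the transmission problem~\eqref{system_t}. For Lipschitz $\Gamma$ and the impedance operators $Z_j$ of~\eqref{eq:calT_t}, Theorem~\ref{thm_Fredholm} already guarantees that $I-\mathcal{S}^0\mathcal{S}^1:L^2(\Gamma)\to L^2(\Gamma)$ is Fredholm of index zero; for $C^2$ curves $\Gamma$ and the operators $Z_j^{PS}$ of~\eqref{eq:calPST_t}, Lemma~\ref{lemma1} shows that $\mathcal{S}^0$ and $\mathcal{S}^1$ are compact on $L^2(\Gamma)$, so that $I-\mathcal{S}^0\mathcal{S}^1$ is again Fredholm of index zero. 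Hence in both cases it suffices to prove that $I-\mathcal{S}^0\mathcal{S}^1$ is injective, after which bounded invertibility follows from standard Fredholm theory. The analogous statements on the Sobolev scales mentioned just before the theorem would then follow from the mapping properties of the BIO representations~\eqref{eq:S1_B1},~\eqref{eq:SjBI} of the RtR operators together with duality.

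First I would record that, because of the block structure of $\mathcal{S}$ in~\eqref{ddm_t}, a density $f_1$ lies in $\ker(I-\mathcal{S}^0\mathcal{S}^1)$ if and only if $[-\mathcal{S}^1 f_1,\, f_1]^\top$ lies in $\ker(I+\mathcal{S})$; this is exactly the bookkeeping underlying formula~\eqref{inv_matrix_explicit}, and it reduces the claim to showing $I+\mathcal{S}$ is injective. So let $[f_0,f_1]^\top\in\ker(I+\mathcal{S})$, i.e.\ $f_0+\mathcal{S}^1 f_1=0$ and $\mathcal{S}^0 f_0+f_1=0$. Invoking the well-posedness of the subdomain impedance problems~\eqref{domain_eqs_t}---which holds because both $Z_j$ and $Z_j^{PS}$ satisfy the sign condition~\eqref{eq:well-pos_t}, the exterior impedance problem in $\Omega_0$ being well posed for \emph{every} $k_0>0$ thanks to the radiation condition (this uses the boundary value problem itself, not any particular BIE representation, so the Dirichlet-eigenvalue caveat is irrelevant here)---I would let $u_0$ be the radiative solution in $\Omega_0$ of $\alpha_0\partial_{n_0}u_0+Z_0 u_0=f_0$ and $u_1$ the solution in $\Omega_1$ of $\alpha_1\partial_{n_1}u_1+Z_1 u_1=f_1$, so that by the definition~\eqref{RtRboxj_t} of the RtR maps one has $\mathcal{S}^0 f_0=(\alpha_0\partial_{n_0}u_0-Z_1 u_0)|_\Gamma$ and $\mathcal{S}^1 f_1=(\alpha_1\partial_{n_1}u_1-Z_0 u_1)|_\Gamma$.

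The crux is then a short computation with traces. Substituting these expressions into the two kernel equations and eliminating $f_0,f_1$ gives, on $\Gamma$,
\[
\alpha_0\partial_{n_0}u_0+\alpha_1\partial_{n_1}u_1=Z_0(u_1-u_0)\quad\text{and}\quad \alpha_0\partial_{n_0}u_0+\alpha_1\partial_{n_1}u_1=-Z_1(u_1-u_0).
\]
Subtracting yields $(Z_0+Z_1)(u_1-u_0)|_\Gamma=0$ (respectively $(Z_0^{PS}+Z_1^{PS})(u_1-u_0)|_\Gamma=0$); since $Z_0+Z_1$, resp.\ $Z_0^{PS}+Z_1^{PS}$, is a bijection from $H^{1/2}(\Gamma)$ onto $H^{-1/2}(\Gamma)$, we conclude $u_0=u_1$ on $\Gamma$, and then either displayed identity forces $\alpha_0\partial_{n_0}u_0+\alpha_1\partial_{n_1}u_1=0$ on $\Gamma$. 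Therefore $(u_0,u_1)$ solves the homogeneous transmission problem~\eqref{system_t} with $u^{inc}\equiv 0$, so by uniqueness $u_0\equiv 0$ and $u_1\equiv 0$, whence $f_0=f_1=0$. This establishes injectivity of $I+\mathcal{S}$, hence of $I-\mathcal{S}^0\mathcal{S}^1$, for all real wavenumbers $k_j$, and completes the proof.

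The step I expect to demand the most care is not the algebra above but the regularity bookkeeping around it: one must check that $L^2(\Gamma)$ Robin data produce subdomain fields whose Cauchy data lie in $H^{1/2}(\Gamma)\times H^{-1/2}(\Gamma)$, so that $Z_j$, the RtR formulas, and the uniqueness theorem for~\eqref{system_t} all genuinely apply---and this is precisely where the invertibility of $\mathcal{B}_j$, $\mathcal{A}_j$ (or $\mathcal{C}_j$) proved earlier is used. A secondary point is keeping the two regimes aligned: the only difference between the case $(Z_j,\ \Gamma\ \text{Lipschitz})$ and the case $(Z_j^{PS},\ \Gamma\ C^2)$ is the provenance of the Fredholm property (Theorem~\ref{thm_Fredholm} versus Lemma~\ref{lemma1}), the injectivity argument being word-for-word identical once one knows that $Z_0+Z_1$, resp.\ $Z_0^{PS}+Z_1^{PS}$, is invertible and that the corresponding subdomain impedance problems are well posed.
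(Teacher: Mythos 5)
Your proposal is correct and follows essentially the same route as the paper: Fredholm of index zero (from Theorem~\ref{thm_Fredholm} for Lipschitz $\Gamma$ with $Z_j$, and from Lemma~\ref{lemma1} for $C^2$ $\Gamma$ with $Z_j^{PS}$) plus an injectivity argument that interprets a kernel element via the two subdomain impedance problems and uses the invertibility of $Z_0+Z_1$ to match the Cauchy data. The only cosmetic difference is that you recover the homogeneous transmission problem~\eqref{system_t} and invoke its uniqueness as a black box, whereas the paper's normalization yields $w_0|_\Gamma=-\alpha_0\alpha_1^{-1}w_1|_\Gamma$, $\partial_{n_1}w_0=-\partial_{n_1}w_1$ and then carries out the Green's-identity/Rellich computation explicitly --- which is the same uniqueness argument written out by hand.
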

\begin{proof}
  Given the results in Lemma~\ref{lemma1} and Theorem~\ref{thm_Fredholm}, it suffices to establish the injectivity of the DDM operator $I-\mathcal{S}^0\mathcal{S}^1$. The arguments in the proof hold for the regularity of the boundary $\Gamma$ stated in the hypothesis. Let $\varphi\in Ker(I-\mathcal{S}^0\mathcal{S}^1)$ and we consider the following Helmholtz equation
  \begin{eqnarray*}
    \Delta w_1+k_1^2w_1&=&0\qquad{\rm in}\ \Omega_1,\\
    \partial_{n_1}w_1+\alpha_1^{-1}Z_1w_1&=&\varphi\qquad{\rm on}\ \Gamma.
  \end{eqnarray*}
  Then, we have that
  \[
  \mathcal{S}^1\varphi=\partial_{n_1}w_1-\alpha_1^{-1}Z_0w_1.
  \]
  Consider also the following Helmholtz problem:
  \begin{eqnarray*}
    \Delta w_0+k_0^2w_0&=&0\qquad{\rm in}\ \Omega_0,\\
    \partial_{n_0}w_0+\alpha_0^{-1}Z_0w_0&=&\mathcal{S}^1\varphi\qquad{\rm on}\ \Gamma.
  \end{eqnarray*}
  and $w_0$ radiative at infinity. Using the fact that $\mathcal{S}^0\mathcal{S}^1\varphi=\varphi$ it  follows that 
  \[
  \mathcal{S}^0\mathcal{S}^1\varphi=\partial_{n_0}w_0-\alpha_0^{-1}Z_1w_0=\partial_{n_1}w_1+\alpha_1^{-1}Z_1w_1.
  \]
  Thus, we have derived the following system of equation on $\Gamma$
  \begin{eqnarray*}
    \partial_{n_0}w_0-\alpha_0^{-1}Z_1w_0&=&\partial_{n_1}w_1+\alpha_1^{-1}Z_1w_1,\\
    \partial_{n_0}w_0+\alpha_0^{-1}Z_0w_0&=&\partial_{n_1}w_1-\alpha_1^{-1}Z_0w_1,
  \end{eqnarray*}
  from which we get that
  \[
  (Z_0+Z_1)(\alpha_0^{-1}w_0+\alpha_1^{-1}w_1)=0\qquad{\rm on}\ \Gamma.
  \]
  Given the invertibility of the operator $Z_0+Z_1$ we obtain
  \[
  w_0|_\Gamma=-\alpha_1^{-1}\alpha_0 w_1|_\Gamma,
  \]
  and then 
  \[
  \partial_{n_1}w_0|_\Gamma=-\partial_{n_1}w_1|_\Gamma.
  \]
  Using the last two identities we derive
  \[
  \Im \int_\Gamma \overline{\partial_{n_1}w_0}\ w_0\ ds=\alpha_1^{-1}\alpha_0 \Im \int_\Gamma \overline{\partial_{n_1}w_1}\ w_1\ ds=\alpha_1^{-1}\alpha_0 \Im\int_{\Omega_1}(|\nabla w_1|^2-k_1^2w_1)dx=0.
  \]
  The last relation implies that $w_0=0$ identically in $\Omega_0$, from which follows immediately that $w_1=0$ in $\Omega_1$, and hence $\varphi=0$. 
  \end{proof}

We turn next to the case of DDM formulations with impedance operators $Z_j^{a},j=0,1$. The situation is quite different in this case due to the entirely different mapping properties of the operators $Z_j^{a},j=0,1$. Regarding this case we present the following result:
\begin{theorem}\label{thm:wp_DDM_a}
  The DDM operators $I-\mathcal{S}^0\mathcal{S}^1:L^2(\Gamma)\to H^1(\Gamma)$ corresponding to the impedance operators $Z_j^{a}$, $j=0,1$, are invertible with continous inverse when the boundary $\Gamma$ is $C^2$.
\end{theorem}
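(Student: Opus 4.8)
The plan is to establish, on the scale $L^2(\Gamma)\to H^1(\Gamma)$, that $I-\mathcal S^0\mathcal S^1$ is Fredholm of index zero, and then to obtain injectivity by the argument already used for Theorem~\ref{thm:wp_DDM}. The structural simplification afforded by $Z_j^a$ is that it is a nonzero scalar multiple of the identity, so $Z_0^a+Z_1^a=c\,I$ with $c:=-i\bigl(\alpha_1(k_1+i\sigma_1)+\alpha_0(k_0+i\sigma_0)\bigr)$; since $\Re c=\alpha_1\sigma_1+\alpha_0\sigma_0>0$ we have $c\ne0$, so $Z_0^a+Z_1^a$ is trivially invertible on every $H^s(\Gamma)$ and no G\aa rding argument is needed. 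The flip side, which accounts for the shift $L^2\to H^1$ in the statement, is that $Z_j^a$ is of order $0$ and so enters the regularized operators only through compositions of the form $S_jZ_j^a$, i.e.\ always paired with a single layer. In particular --- this is the ``certain difference'' announced after Theorem~\ref{eq:inv_1} --- the operator $\mathcal B_1=\tfrac12 I+K_1+\alpha_1^{-1}S_1Z_1^a$ now has principal part $\tfrac12 I$ rather than $\tfrac{\alpha_0+\alpha_1}{2\alpha_1}I$, the term $\alpha_1^{-1}S_1Z_1^a$ mapping $L^2(\Gamma)\to H^1(\Gamma)$; it is still invertible on $H^s(\Gamma)$, $s\in[-3,3]$, because $\tfrac12\ne0$ and the injectivity argument in the proof of Theorem~\ref{eq:inv_1} carries over unchanged, now using $\Im\int_\Gamma(Z_1^a\psi)\overline\psi\,ds=-\alpha_0k_0\|\psi\|_{L^2(\Gamma)}^2\ne0$ for $\psi\ne0$. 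The same holds for $\mathcal B_0$ when $k_0$ is not an interior Dirichlet eigenvalue of $\Omega_1$, and for the resonance-free formulations $\mathcal A_0$ or $\mathcal C_0$ otherwise.

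Using the representation $\mathcal S^j=I-\alpha_j^{-1}(Z_0+Z_1)\mathcal B_j^{-1}S_j=I-\alpha_j^{-1}c\,\mathcal B_j^{-1}S_j$ together with $\mathcal B_j^{-1}=2I-2\mathcal B_j^{-1}(\mathcal B_j-\tfrac12 I)$ and the enhanced $C^2$ mapping properties of Theorem~\ref{mappingS} (which give $\mathcal B_j-\tfrac12 I:H^s(\Gamma)\to H^{s+1}(\Gamma)$ and $S_j-S_L:L^2(\Gamma)\to H^3(\Gamma)$), one finds
\[
\mathcal S^j-I=-2\alpha_j^{-1}c\,S_L+\mathcal K_j,\qquad \mathcal K_j:L^2(\Gamma)\to H^2(\Gamma),
\]
so each $\mathcal S^j$ is the identity plus an operator that is compact on $L^2(\Gamma)$ but only maps $L^2(\Gamma)$ into $H^1(\Gamma)$, the single layer $S_L$ being genuinely of order $-1$. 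Expanding $I-\mathcal S^0\mathcal S^1=-(\mathcal S^0-I)-(\mathcal S^1-I)-(\mathcal S^0-I)(\mathcal S^1-I)$ and noting that the product term maps $L^2(\Gamma)\to H^1(\Gamma)\to H^2(\Gamma)$ gives
\[
I-\mathcal S^0\mathcal S^1=2c\,(\alpha_0^{-1}+\alpha_1^{-1})\,S_L+\mathcal K,\qquad \mathcal K:L^2(\Gamma)\to H^2(\Gamma).
\]
The decisive point is that the two order $-1$ contributions do not cancel: they are the \emph{positive} multiples $2\alpha_0^{-1}c$ and $2\alpha_1^{-1}c$ of $S_L$, so the leading term $2c(\alpha_0^{-1}+\alpha_1^{-1})S_L$ is non-degenerate. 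Since $S_L:L^2(\Gamma)\to H^1(\Gamma)$ is Fredholm of index zero for $C^2$ curves and $c\ne0$, while $\mathcal K$ is compact from $L^2(\Gamma)$ to $H^1(\Gamma)$, it follows that $I-\mathcal S^0\mathcal S^1:L^2(\Gamma)\to H^1(\Gamma)$ is Fredholm of index zero. When $k_0$ is an interior Dirichlet eigenvalue of $\Omega_1$ one repeats the computation with $\mathcal A_0$ or $\mathcal C_0$ in place of $\mathcal B_0$; the principal part of $\mathcal S^0-I$, being fixed by a local parametrix, is the same multiple of $S_L$, so the conclusion holds for all real $k_j$.

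For injectivity I would repeat the argument in the proof of Theorem~\ref{thm:wp_DDM}. Given $\varphi\in\ker(I-\mathcal S^0\mathcal S^1)$, let $w_1$ solve the interior Helmholtz equation in $\Omega_1$ with $\partial_{n_1}w_1+\alpha_1^{-1}Z_1^aw_1=\varphi$ on $\Gamma$ (well posed since $\Im\int_\Gamma Z_1^a\varphi\overline\varphi\,ds=-\alpha_0k_0\|\varphi\|_{L^2(\Gamma)}^2<0$ verifies \eqref{eq:well-pos_t}), and let $w_0$ be the radiating solution in $\Omega_0$ with $\partial_{n_0}w_0+\alpha_0^{-1}Z_0^aw_0=\mathcal S^1\varphi$ on $\Gamma$. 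From $\mathcal S^0\mathcal S^1\varphi=\varphi$ one obtains, exactly as before, the interface relations
\[
\partial_{n_0}w_0-\alpha_0^{-1}Z_1^aw_0=\partial_{n_1}w_1+\alpha_1^{-1}Z_1^aw_1,\qquad \partial_{n_0}w_0+\alpha_0^{-1}Z_0^aw_0=\partial_{n_1}w_1-\alpha_1^{-1}Z_0^aw_1
\]
on $\Gamma$, whose difference yields $(Z_0^a+Z_1^a)(\alpha_0^{-1}w_0+\alpha_1^{-1}w_1)=0$; invertibility of $Z_0^a+Z_1^a=cI$ gives $w_0|_\Gamma=-\alpha_0\alpha_1^{-1}w_1|_\Gamma$ and then $\partial_{n_1}w_0|_\Gamma=-\partial_{n_1}w_1|_\Gamma$. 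Because $\Im\int_{\Omega_1}(|\nabla w_1|^2-k_1^2|w_1|^2)\,dx=0$, these identities force $\Im\int_\Gamma\overline{\partial_{n_1}w_0}\,w_0\,ds=0$, and the Rellich uniqueness theorem for radiating solutions then gives $w_0\equiv0$ in $\Omega_0$, hence $w_1\equiv0$ in $\Omega_1$ and $\varphi=0$. Combined with the Fredholmness of index zero above and the open mapping theorem, this shows that $I-\mathcal S^0\mathcal S^1:L^2(\Gamma)\to H^1(\Gamma)$ is invertible with continuous inverse.

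I expect the main obstacle to be the second paragraph. The injectivity step is routine once $Z_0^a+Z_1^a$ is known invertible, and it copies the earlier proof. The real work is recognizing that, in the $Z^a$ case, $I-\mathcal S^0\mathcal S^1$ is genuinely of order $-1$ --- which is exactly why the statement lives on $L^2(\Gamma)\to H^1(\Gamma)$ --- and that its leading part is a \emph{non-degenerate} multiple of the Laplace single layer; checking the non-cancellation of the two single-layer contributions and the $L^2(\Gamma)\to H^2(\Gamma)$ (rather than merely $L^2\to H^1$) smoothing of the remainder requires careful bookkeeping with the extra $C^2$ smoothing of $S_{k}-S_{k'}$, $N_{k}-N_{k'}$, $K_L$ and $K_L^\top$ from Theorem~\ref{mappingS}, used together with the fact that $Z_j^a$ never appears except paired with a single layer.
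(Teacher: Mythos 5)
Your proposal is correct and follows essentially the same route as the paper: you arrive at the same key decomposition $I-\mathcal{S}^0\mathcal{S}^1=2(\alpha_0^{-1}+\alpha_1^{-1})(Z_0^a+Z_1^a)S_L+\mathcal{D}$ with $\mathcal{D}:L^2(\Gamma)\to H^2(\Gamma)$ compact, and then recycle the injectivity argument of Theorem~\ref{thm:wp_DDM}. The only cosmetic differences are that the paper works from the $\mathcal{A}_j$ representation~\eqref{eq:A_compS}--\eqref{eq:SjBI} rather than $\mathcal{B}_j$, and deduces index zero from a G\aa rding inequality ($\Re(Z_j^a)>0$ together with coercivity of $S_L$) rather than from the ellipticity of $S_L$ as an order $-1$ operator.
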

\begin{proof}
  We note that is suffices to establish the Fredholmness of the operators $I-\mathcal{S}^0\mathcal{S}^1:L^2(\Gamma)\to H^1(\Gamma)$. A key ingredient is to revisit the result established in formula~\eqref{eq:A_compS}, which in the case when the boundary $\Gamma$ is $C^2$ implies that
  \[
  \mathcal{A}_j=I+2\alpha_j^{-1}Z_j^aS_L+\widetilde{\mathcal{A}_{j,a}^{reg}},\quad j=0,1,
  \]
  where the operators $\widetilde{\mathcal{A}_{j,a}^{reg}}:L^2(\Gamma)\to H^{2}(\Gamma)$, and thus $\widetilde{\mathcal{A}_{j,a}^{reg}}:L^2(\Gamma)\to H^1(\Gamma)$ are compact for $j=0,1$. In the light of this fact, we obtain from formula~\eqref{eq:SjBI}
  \[
  \mathcal{S}^j=\mathcal{A}_j^{-1}(I-2\alpha_j^{-1}Z_{j+1}^aS_L)+\widetilde{\mathcal{S}}^j,\quad j=0,1,
  \]
  where $\widetilde{\mathcal{S}}^j:L^2(\Gamma)\to H^2(\Gamma)$. Following similar calculations to those in the proof of Theorem~\ref{thm_Fredholm} we arrive at
  \[
  I-\mathcal{S}^0\mathcal{S}^1=2(\alpha_0^{-1}+\alpha_1^{-1})(Z_0^a+Z_1^a)S_L+\mathcal{D},
  \]
  where $\mathcal{D}:L^2(\Gamma)\to H^2(\Gamma)$ and thus $\mathcal{D}:L^2(\Gamma)\to H^1(\Gamma)$ is a compact operator. Clearly, since $\Re(Z_j^a)>0,\ j=0,1$, the operator $I-\mathcal{S}^0\mathcal{S}^1$ satisfies a G\aa rding inequality given that $\Re \int_\Gamma S_L\varphi\ \overline{\varphi}\ ds\geq c\|\varphi\|^2_{H^{-1/2}(\Gamma)}$, and thus the operator $I-\mathcal{S}^0\mathcal{S}^1:L^2(\Gamma)\to H^1(\Gamma)$ is Fredholm of index zero. Its injectivity can be established by the same arguments as in the proof of Theorem~\ref{thm:wp_DDM}.
\end{proof}

To summarize, the well-posedness of the DDM formulations was established for all three choices of impedance operators in the case of $C^2$ boundaries $\Gamma$, and for the impedance operators $Z_j,j=0,1$ in the case of Lipschitz domains. 

\subsection{Domain decomposition approach with further subdomain divisions~\label{subdivisions}}

As it clear from Section~\ref{rtr}, the calculation of the RtR maps $\mathcal{S}^j,j=0,1$ needed in the DDM system~\eqref{DDM} requires operator inversions. In the high-frequency regime, the computation of the RtR maps requires inversion of large matrices, which becomes expensive if direct linear algebra solvers are employed. Furthermore, iterative Krylov subspace solvers require increasingly larger numbers of iterations for computation of interior RtR maps as the frequency increases, regardless of the BIE formulation used for these computations.  Thus, one possibility to reduce the computational costs incurred by the computation of RtR maps is to further subdivide the interior domain into a union of non-overlapping subdomains $\Omega_1=\cup_{j=1}^J \Omega_{1j}$. We assume that the decomposition is such that (1) each of the subdomains $\Omega_{1j}$ is simply connected/convex and (2) there are always cross points that belong to more than three subdomains---see Figure~\ref{fig:subdiv1} for an illustration of such subdivisions of an L-shaped domain $\Omega_1$. We define $\Gamma_{j\ell}:=\partial\Omega_{1j}\cap\partial\Omega_{1\ell}, 1\leq j,\ell$ in the case when the subdomains $\Omega_{1j}$ and $\Omega_{1\ell}$ share an edge in common, and $\Gamma_{j0}:=\partial\Omega_{1j}\cap\partial\Omega_0,\ 1\leq j$ in the case when the subdomain $\Omega_{1j}$ and $\partial\Omega_0=\Gamma$ share an edge in common. With these additional notations in place, the DDM system is written in the form
\begin{eqnarray}\label{DDMnew}
  \Delta u_{1j} +k_{1}^2 u_{1j} &=&0\qquad {\rm in}\ \Omega_{1j},\\
  \alpha_1\partial_{n_j}u_{1j}+Z_{1j}u_{1j}&=&-\alpha_1\partial_{n_\ell}u_{1\ell}+Z_{1j}u_{1\ell}\qquad{\rm on}\ \Gamma_{j\ell}\nonumber,\\
  \alpha_1\partial_{n_j}u_{1j}+Z_{1j} u_{1j}&=&-\alpha_0(\partial_{n_0}u_{0}+\partial_{n_0}u^{inc})+Z_{1j} (u_{0}+u^{inc})\qquad{\rm on}\ \Gamma_{j0},\nonumber\\
  \alpha_0(\partial_{n_0}u_{0}+\partial_{n_0}u^{inc})+Z_{0j} (u_{0}+u^{inc})&=&-\alpha_1\partial_{n_j}u_{1j}+Z_{0j} u_{1j}\qquad{\rm on}\ \Gamma_{j0}\nonumber,
\end{eqnarray}
where $n_j$ denotes the unit normal on $\partial\Omega_{1j}$ pointing to the exterior of the subdomain $\Omega_{1j}$. The transmission operators $Z_{0j}$ and $Z_{1j}$ in equations~\eqref{DDMnew} can be defined in the following way:
\begin{equation}\label{eq:first_localization}
Z_{0j}:=-2\alpha_1\chi_{0j}N_{\partial\Omega_{1j},k_1+i\sigma_1}\chi_{0j},
\end{equation}
where $\chi_{0j}$ is a smooth cutoff function supported on $\partial\Omega_{1j}\cap\partial\Omega_0$ (again, assumed to have non-zero one-dimensional measure), and respectively
\begin{equation}\label{eq:second_localization}
Z_{1j}:=-2\alpha_0\chi_{1j,0}N_{\Gamma,k_0+i\sigma_0}\chi_{1j,0}-2\alpha_1\sum_{\ell}\chi_{1j,1\ell}N_{\partial\Omega_\ell,k_1+i\sigma_1}\chi_{1j,1\ell},
\end{equation}
where $\chi_{1j,0}$ is a smooth cutoff function supported on $\partial\Omega_{1j}\cap\Gamma$ (in the case when the one-dimensional measure of the intersection is non-zero), and $\chi_{1j,1\ell}$ is a smooth cutoff function supported on $\Gamma_{j\ell}=\partial\Omega_{1j}\cap\partial\Omega_{1\ell}$ for all interior subdomains $\Omega_{1\ell},\ \ell\neq j$ that share an edge with the given subdomain $\Omega_{1j}$. This type of localization was previously discussed in~\cite{turc2016well}: the role of cutoffs is to blend the various operators in a manner that (1) is consistent for open arcs $\Gamma_{j\ell}$, and (2) gives rise to well-posed local Helmholtz problems. The cutoff technique also allows for blending of the Fourier multiplier operators $Z_j^{PS},j=0,1$. In the case of transmission operators $Z_j^a,j=0,1$, the blending is not necessary, since piece-wise constant impedances pose no difficulties. The DDM formulation~\eqref{DDMnew} can be recast in terms of computing Robin data:
\[
f_{1j}:=(\alpha_1\partial_{n_j}u_j+Z_{1j}u_j)|_{\partial\Omega_{1j}},\ j=1,\ldots,J\,
\]
and
\[
f_{0j}:=(\alpha_0\partial_{n_0}u_0+Z_{0j}u_0)|_{\partial\Omega_{1j}\cap\Gamma},\ meas(\partial\Omega_{1j}\cap\Gamma)\neq 0,
\]
via suitably defined RtR maps that take into account adjacent subdomains and their corresponding transmission operators. While the choice of blended transmission operators presented above gives rise to well-posed subdomain problems, the well-posedness of the DDM formulation~\eqref{DDMnew} remains an open question. Owing to the different nature of the transmission operators incorporated in the DDM formulation~\eqref{DDMnew}, the arguments used in the proof of the well-posedness of the DDM formulation~\eqref{DDM_t} that we presented in Section~\ref{DDMw} cannot be readily translated to the new setting.
\begin{figure}
\centering
\includegraphics[scale=1]{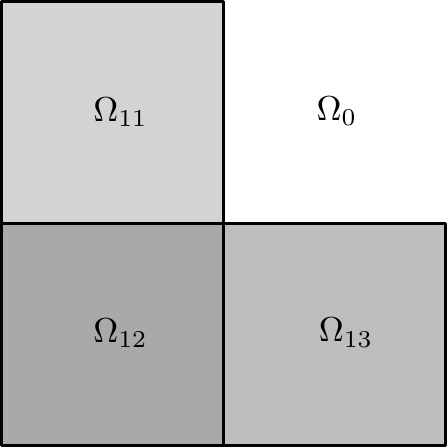}
\caption{Typical subdomain decomposition.}
\label{fig:subdiv1}
\end{figure}

 \section{Scalar transmission problems in partially coated domains\label{MS1}}

We consider next the problem of two dimensional transmission by structures that feature partial coatings, i.e.~penetrable scattering problems when parts of the boundary of the scatterer are perfectly conducting/impenetrable. Let $\Omega_1$ denote a bounded domain in $\mathbb{R}^2$ whose boundary $\Gamma:=\partial\Omega_1$ is given locally by the graph of a Lipschitz function, and let $\Omega_0:=\mathbb{R}^2\setminus\Omega_1$. We seek to find fields $u_0$ and $u_1$ that are solutions of the following scalar Helmholtz transmission problem:

\begin{equation}\label{system_tj}
\begin{array}{rclll}
 \Delta u_j +k_j^2 u_j &=&0& {\rm in}& \Omega_j,\quad j=0,1,\smallskip\\
  u_0+u^{inc}&=&u_1&{\rm on}& \Gamma_\TR,\smallskip\\
  \alpha_0(\partial_{n_0}u_0+\partial_{n_0}u^{inc})&=&-\alpha_1\partial_{n_1}u_1&{\rm on}& \Gamma_\TR,\smallskip\\
  \partial_{n_0}(u_0+u^{inc})&=&0&{\rm on}& \Gamma_{\PEC},\smallskip\\
  \partial_{n_1}u_1&=&0&{\rm on}& \Gamma_{\PEC},\smallskip\\
\multicolumn{4}{c}{\displaystyle \lim_{r\to\infty}r^{1/2}(\partial u_0/\partial r-ik_0u_0)=0,} \end{array}\end{equation}
where $\Gamma_\TR\cap \Gamma_{\PEC}=\Gamma$, and $|\Gamma_{\PEC}|>0$, where $|\Gamma_{PEC}|$ denotes the one dimensional measure of the set $\Gamma_{\PEC}$. We assume that the wavenumbers $k_j$ and the quantities $\alpha_j$ in the subdomains $\Omega_j$ are positive real numbers. The unit normal to the boundary $\partial\Omega_j$ is here denoted by $n_j$ and is assumed to point to the exterior of the subdomain $\Omega_j$. The incident field $u^{inc}$, on the other hand, is assumed to satisfy the Helmholtz equation with wavenumber $k_0$ in the unbounded domain $\Omega_0$. Finally, we assume that the parameters $\alpha_j$ are such that the transmission problem~\eqref{system_tj} is well posed.

\begin{figure}
\centering
\includegraphics[scale=1]{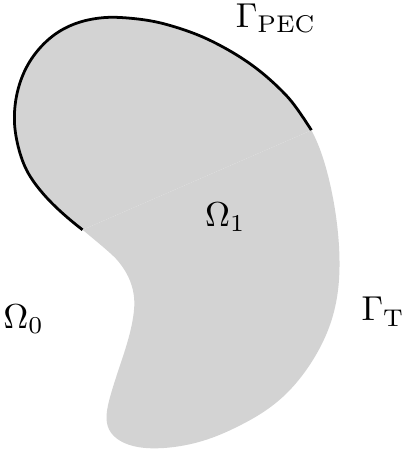}
\caption{Partially coated domain.}
\label{fig:coated}
\end{figure}

In what follows, we present again two formulations of the transmission problem~\eqref{system_tj}: a BIE  and a DDM approach.

\subsection{Boundary integral equation formulations\label{MS2}}

Again, boundary integral formulations of the transmission problem~\eqref{system_tj} can be derived using layer potentials defined on $\Gamma$: the solutions $u_j,j=0,1,$ of the transmission problem are sought in the form:
\begin{equation}\label{eq:layer_tj}
  u_j(\mathbf{x}):=SL_{\Gamma,j}\ v+(-1)^j\alpha_j^{-1} DL_{\Gamma,j}\ p,\quad \mathbf{x}\in\Omega_j,
\end{equation}
where $v$ and $p$ are densities defined on the $\Gamma$ and the double layer operators are defined with respect to exterior unit normals $\mathbf{n}$ corresponding to each domain $\Omega_j$. Here we used the same convention that the index $j$ in equation~\eqref{eq:layer_tj} refers to the wavenumber $k_j$ for $j=0,1$. The enforcement of the transmission conditions on the interface $\Gamma_T$ as well as the enforcement of the PEC conditions on the interface $\Gamma_{\PEC}$ lead to the following system of boundary integral equations:
\begin{equation}\label{eq:sk_system}
  \begin{array}{rclll}
  \frac{\alpha_0^{-1}+\alpha_1^{-1}}{2}\Pi_\TR p-\Pi_\TR(\alpha_0^{-1}K_{0}+\alpha_1^{-1}K_{1})p+\Pi_\TR(S_{1}-S_{0})v&=&u^{inc}&{\rm on}& \Gamma,\smallskip\\
  \frac{\alpha_0+\alpha_1}{2}\Pi_\TR v+\Pi_\TR(\alpha_0K_{0}^\top+\alpha_1K_{1}^\top)v+\Pi_\TR(N_{0}-N_{1})p&=&-\alpha_0\Pi_\TR\partial_{n_0}u^{inc}&{\rm on}&\Gamma,\smallskip\\
  \frac{1}{2}\Pi_{\PEC}v+\Pi_{\PEC}K_{0}^\top v+\alpha_0^{-1}\Pi_{\PEC}N_{0}p&=&-\Pi_{\PEC}\partial_{n_0}u^{inc}&{\rm on}& \Gamma,\smallskip\\
  \frac{1}{2}\Pi_{\PEC}v+\Pi_{\PEC}K_{1}^\top v-\alpha_1^{-1}\Pi_{\PEC}N_{1}p&=&0&{\rm on}& \Gamma,
  \end{array}
\end{equation}
where the restriction operators $\Pi_\TR$ and $\Pi_{\PEC}$ are defined as $\Pi_\TR\psi:=\psi|_{\Gamma_\TR}$ and respectively $\Pi_{\PEC}\psi:=\psi|_{\Gamma_{\PEC}}$, for functions $\psi$ defined on $\Gamma$. The restriction operators can be extended to distributions. In what follows we refer to the formulation~\eqref{eq:sk_system} by the acronym CFIESK. To the best of our knowledge, the well-posedness of the CFIESK formulations~\eqref{eq:sk_system} has not been established in the literature.

 \subsection{Domain decomposition approach\label{DDMcoat}}

DDM are natural candidates for numerical solution of transmission problems~\eqref{system_tj}. A non-overlapping domain decomposition approach for the solution of equations~\eqref{system_tj} consists of solving subdomain problems in $\Omega_j,j=0,1$ with matching Robin transmission boundary conditions on the common subdomain interface $\Gamma_T$. Indeed, this procedure amounts to computing the subdomain solutions:

\begin{eqnarray}\label{DDM}
  \Delta u_j +k_j^2 u_j &=&0\qquad {\rm in}\quad \Omega_j,\\
  \alpha_j(\partial_{n_j}u_j+\delta_j^0\partial_{n_j}u^{inc})+Z_j(u_j+\delta_j^0 u^{inc})&=&-\alpha_\ell(\partial_{n_\ell}u_\ell+\delta_\ell^0 \partial_{n_\ell}u^{inc})+Z_j (u_\ell+\delta_\ell^0 u^{inc})\quad{\rm on}\quad \Gamma_{\TR},\nonumber\\
  \partial_{n_j}u_j+\delta_j^0\partial_{n_j}u^{inc}&=&0\quad{\rm on}\quad \Gamma_{\PEC},\nonumber
\end{eqnarray}
where $\{j,\ell\}=\{0,1\}$ and $\delta_j^0$ stands for the Kronecker symbol, and $Z_j,Z_\ell$ are transmission operators that will be defined in what follows.  In order to describe the DDM method more concisely we introduce subdomain RtR maps~\cite{Collino1}. For each subdomain $\Omega_j,j=01,1$ we define RtR maps $\mathcal{S}^j,j=0,1$ in the following manner:
\begin{equation}\label{RtRboxj}
   \mathcal{S}^0(\psi_0):=(\alpha_0\partial_{n_0} u_0-Z_1u_0)|_{\Gamma_\TR}\quad\mbox{and}\quad \mathcal{S}^1(\psi_1):=(\alpha_1\partial_{n_1} u_1-Z_0u_1)|_{\Gamma_\TR}.
 \end{equation}
The DDM~\eqref{DDM} can be recast in terms of computing the global Robin data $f=[f_1\ f_0]^\top$ with
\[
f_{j}:=(\alpha_j\partial_{n_j}u_j+Z_j u_j)|_{\Gamma_\TR},\ j=0,1,
\]
as the solution of the following linear system that incorporates the subdomain RtR maps $\mathcal{S}^j,j=0,1$, previously defined
 \begin{equation}\label{ddm}
 (I+\mathcal{S})f=g\quad\mbox{where}\quad \mathcal{S}:=\begin{bmatrix}0&\mathcal{S}^0\\\mathcal{S}^1&0\end{bmatrix},
 \end{equation}
with right-hand side $g=[g_1\ g_0]^\top$ wherein
 \begin{eqnarray}\label{rhs_ddm}
   g_1&=& (-\alpha_0\partial_{n_0}u^{inc}+Z_1u^{inc})|_{\Gamma_\TR},\nonumber\\
   g_0&=&-(\alpha_0\partial_{n_0}u^{inc}+Z_0u^{inc})|_{\Gamma_\TR}.\nonumber
   \end{eqnarray}
Ideally, the operator $Z_0$ should be the restriction to $\Gamma_\TR$ of Dirichlet-to-Neumann (DtN) operator corresponding to the Helmholtz equation posed in the domain $\Omega_1$ with generalized Robin boundary conditions on $\Gamma_\TR$ and zero Neumann boundary conditions on $\Gamma_{\PEC}$. Following the methodology presented in the context of classical transmission problems, we employ a smooth cutoff function $\chi_\TR$ supported on $\Gamma_\TR$ in order to define the following the transmission operators:
 \begin{equation}\label{eq:calT}
 Z_0=-2\alpha_0\chi_{\TR}N_{\Gamma,k_1+i\sigma_1}\chi_{\TR}\quad\mbox{and}\quad Z1 = -2\alpha_1\chi_{\TR}N_{\Gamma,k_0+i\sigma_0}\chi_{\TR},\quad \sigma_j>0.
 \end{equation}
We can also use principal symbol transmission operators define accordingly
 \begin{equation}\label{eq:calPST}
 Z_0^{PS}=-2\alpha_0\chi_{\TR}PS(N_{\Gamma,k_1+i\sigma_1})\chi_{\TR}\quad\mbox{and}\quad Z_1^{PS} = -2\alpha_1\chi_{\TR}PS(N_{\Gamma,k_0+i\sigma_0})\chi_{\TR},\quad \sigma_j>0,
 \end{equation}
 as well as the simpler transmission operators 
 \begin{equation}\label{eq:dampingT}
   Z_0^a=-i\alpha_0(k_1+i\sigma_1)\Pi_\TR\quad\mbox{and}\quad Z_1^a=-i\alpha_1(k_0+i\sigma_0)\Pi_\TR.
 \end{equation}
 The RtR maps corresponding to the new transmission operators defined in equations~\eqref{eq:calT},~\eqref{eq:calPST}, and~\eqref{eq:dampingT} can be computed by readily incorporating in the methodology presented in Section~\ref{rtr} the additional requirement of zero Neumann traces on the portion $\Gamma_{\PEC}$ of the boundaries. 

\section{High-order Nystr\"om discretizations~\label{Nystrom}}

\parskip 10pt plus2pt minus1pt
\parindent0pt 

We use Nystr\"om discretizations of  the CFIESK equation~\eqref{eq:sk_system}, as well as the RtR maps associated with the various DDM formulations. The key ingredient is the Nystr\"om discretization of the four BIO in the Calder\'on calculus for piecewise smooth boundaries. These discretizations were introduced in~\cite{dominguez2016well} where this methodology was presented in full detail. In particular, the discretization of the CFIESK equation~\eqref{eq:sk_system} was described in the aforementioned contribution. Therefore, we present here the discretization of the DDM formulations that relies, in turn, on discretizations of the corresponding RtR maps. Specifically, graded meshes produced by means sigmoid transforms~\cite{KressCorner} that accumulate points polynomially toward corner and junction points (where $\Gamma_{PEC}$ and $\Gamma_T$ meet) are utilized on the closed curve $\Gamma$.  For each of the subdomains $\Omega_j$, $j=0,1$, we employ graded meshes denoted by  $$L_j:=\{\mathbf{x}^j_{m},m=0,\ldots,N_j-1\}\quad{\rm on}\quad \partial \Omega_j=\Gamma,$$ with the same polynomial degree of the sigmoid transforms on all subdomains. All meshes in the parameter space $[0,2\pi]$ are shifted by the same amount so that none of the grid points on the skeleton corresponds to a triple/multiple junction or a corner point. We allow for non-conforming meshes, that is $N_1$ may not be equal to $N_0$; the size $N_j$ of the mesh $L_j$ is chosen to resolve the wavenumber $k_j$ corresponding to the domain $\Omega_j$.  

Using graded meshes that avoid corner points, trigonometric interpolation, and the classical singular quadratures of Kusmaul and Martensen~\cite{kusmaul,martensen}, we perform the Nystr\"om discretization presented in~\cite{dominguez2016well} to produce high-order $ N_j\times N_j$ collocation matrix approximations of the four BIO described in equations~\eqref{traces}. We note that discretizations of the Fourier multiplier operators $Z_j^{PS}$, $j=0,1$ is straightforward via trigonometric interpolation~\cite{dominguez2016well}. Based on these, the DDM algorithm proceeds with a precomputational stage whereby matrix approximations of all the RtR maps needed are produced. The precomputational stage is computationally expensive on account of the matrix inversions needed for the computation of discrete RtR matrices. Nevertheless, this stage is highly parallelizable since the computation of the RtR matrix corresponding to a subdomain does not require information from adjacent subdomains. In order to avoid complications related to singularities at junction/cross points, we replace in the DDM algorithm the RtR maps by {\em weighted} parametrized counterparts
\[
\mathcal{S}^{j,w}(\alpha_j|\mathbf{x}_j'|\partial_{n_j}u_j+Z_j\ u_j):=\alpha_j|\mathbf{x}_j'|\partial_{n_j}u_j-Z_{j+1}\ u_j.
\]
Collocated discretizations of the latter weighted RtR maps can be easily computed through a simple modification of the methodology introduced in~\cite{turc2016well} and recounted above. Nevertheless, the representation of RtR maps in terms of BIO requires use of inverses of matrices corresponding to Nystr\"om discretizations of either operators $\mathcal{B}_j$, \emph{cf.}~\eqref{eq:S1_B1}, $\mathcal{A}_j$, \emph{cf.}~\eqref{eq:SjBI}, or $\mathcal{C}_j$, \emph{cf.}~\eqref{eq:Steinbach}. The inversion of these matrices can be performed via direct or iterative linear algebra methods. In the former case, the discretization of the weighted RtR maps corresponding to each domain $\partial \Omega_j$ is constructed as $N_j\times N_j$ collocation matrices  $\mathcal{S}^j_{N_j}$. For bounded (interior) domains, the formulations based on the use of the simpler operators $\mathcal{B}_1$ are the most efficient for use of direct linear algebra solvers; the ones based on operators $\mathcal{A}_1$ are more complex, and the ones based on the operators $\mathcal{C}_1$ require inversions of matrices twice as large. For the unbounded domain $\Omega_0$, the formulations based on the use of operators $\mathcal{A}_0$ are preferred owing to their stability valid for all real wavenumbers $k_0$. However, the use of direct linear algebra solvers at this stage imposes limitations on the discretization size $N_j$. This size can be further reduced by employing subdivisions of the interior domain $\Omega_1$ as described in Section~\ref{subdivisions}. For the examples considered in this text, such subdivisions are straightforward. Alternatively, when iterative linear algebra methods are employed for the calculation of RtR maps, the latter are an inner iteration in the iterative solution of the DDM linear system~\eqref{ddm_t}. Our numerical experiments presented in the next Section suggest that the use of BIE formulations based on the operators $\mathcal{A}_0$~\eqref{eq:SjBI} for the calculation of exterior RtR maps $\mathcal{S}^0$ result in small numbers of iterations that grow slowly as the frequency increases. The situation is entirely different in the case of interior RtR maps $\mathcal{S}^1$: all three BIE formulations considered in this text give rise to numbers of iterations that grow significantly with the frequency. Again, a remedy for this issue is employing subdivisions of the interior domain $\Omega_1$ and thus effectively reducing the acoustic/electric size of the subdomains.

Once the discretized RtR matrices $S^0_{N_0}\in \mathbb{R}^{N_0\times N_0}$ and respectively $S^1_{N_1}\in \mathbb{R}^{N_1\times N_1}$ are computed (we assume in what follows that $k_0<k_1$ and thus $N_0\leq N_1$) the discretization of the DDM linear system~\eqref{ddm_t} is easily set up in the form
\begin{eqnarray}\label{ddm_t_discrete}
  f_0^{N_0} + P_{N_1\to N_0}\mathcal{S}^1_{N_1}f_1^{N_1}&=&g_0^{N_0},\nonumber\\
  f_1^{N_1} +E_{N_0\to N_1}\mathcal{S}^0_{N_0}f_0^{N_0}&=&g_1^{N_1},
 \end{eqnarray}
where $f_j^{N_j}$ are approximations of the Robin data $f_j$ trigonometrically collocated on the grids $L_j$ for $j=0,1$, and the projection operator $P_{N_1\to N_0}$ and the extension operator $E_{N_0\to N_1}$ allow for transfer of information via Fourier space from the two grids $L_0$ and $L_1$. Specifically, the extension operator $E_{N_0\to N_1}$ is realized via zero padding in the Fourier space, while the projection operator $P_{N_1\to N_0}$ is a cutoff operator in the Fourier space. The right hand-side in equation~\eqref{ddm_t_discrete} are obtained by simply evaluating $g_j$ on the grids $L_j$ for $j=0,1$. In order to further reduce the size of the linear system that we solve, we further eliminate the data $f_1^{N_1}$ from the linear system~\eqref{ddm_t_discrete} and solve the reduced linear system
\begin{equation}\label{eq:ddm_final}
  f_0^{N_0}-P_{N_1\to N_0}\mathcal{S}^1_{N_1}E_{N_0\to N_1}\mathcal{S}^0_{N_0}f_0^{N_0}=g_0^{N_0}-P_{N_1\to N_0}\mathcal{S}^1_{N_1}g_1^{N_1}.
  \end{equation}
Once the exterior Robin data $f_0^{N_0}$ is computed by solving the linear system~\eqref{eq:ddm_final}, the exterior Cauchy data on $\Gamma$ can be immediately retrieved via the RtR operator $\mathcal{S}^0$. The interior Cauchy data on $\Gamma$ is then readily computed from the continuity conditions. In what follows, we present a concise algorithmic description of the DDM formulation~\eqref{ddm_t}. The modifications needed to cover the DDM with further domain subdivisions~\eqref{DDMnew} or the DDM for partial coatings~\eqref{ddm} are straightforward.

\begin{algorithm}
  \SetAlgoLined
  Offline: For each subdomain $\Omega_j$, discretize all the BIO that feature in formulas~\eqref{eq:int_D} and~\eqref{eq:CFIER2} corresponding to each boundary $\partial\Omega_j$ using Nystr\"om discretizations. The discretization of each BIO results in a collocation matrix of size $N_j\times N_j$, whose computational cost is $\mathcal{O}(N_j^2)$\;
  Offline: Compute all the collocated subdomain RtR matrices $\mathcal{S}^j_{N_j}$ using formulation~\eqref{eq:int_D} for the interior domain and the formulation~\eqref{eq:CFIER2} for the exterior domain. We compute discretizations of the RtR maps via LU factorizations, and thus the cost of evaluating each subdomain RtR map is $\mathcal{O}(N_j^3)$\;
  Solution: Set up the DDM linear system according to formula~\eqref{eq:ddm_final} and solve for the Robin data $f_0^{N_0}$ using GMRES\;
  Post-processing: Use the Robin data $f_0^{N_0}$ computed in the previous step and the RtR matrix $\mathcal{S}^0_{N_0}$ to compute Cauchy data on $\Gamma$.
 \caption{Description of the DDM algorithm}{\label{DDM1}}
\end{algorithm}

 \section{Numerical results\label{num}}

 In this section we present numerical experiments concerning the iterative behavior of various DDM solvers considered in this text. We also document the iterative behavior of the CFIESK solvers. We mention that a comprehensive comparison between various integral formulations for transmission problems was pursued in~\cite{turc2,dominguez2016well,jerez2017multitrace}. While the CFIESK formulations are not the most performant formulations vis-a-vis iterative solvers, they are the simplest and most widely used in the collocation discretization community~\cite{rokhlin-dielectric,greengard1}. Also, and as mentioned previously, the CFIESK can be relatively easily extended to more complex boundary conditions scenarios. It is not our goal to carry in this text a detailed computational efficiency comparison between BIE formulations and DDM formulations of Helmholtz transmission problems. On the one hand, there is a relatively large body of work in which fast methods and matrix compression techniques are used to accelerate the performance of BIE based solvers~\cite{greengard1,br-turc}. On the other hand, DDM with quasi-optimal transmission operators for transmission Helmholtz equations have been studied to a very limited extent; it is our intent to highlight in this paper the remarkable iterative properties that these solvers enjoy, and to point out several challenges that they face related to efficient computations of RtR maps. It is important to bear in mind that one of the main attractive feature of DDM is their embarassing parallelism, which is much harder to achieve by BIE solvers. We plan to pursue elsewhere an in depth comparison between the computational efficiency of BIE solvers and DDM solvers for three dimensional transmission problems.   

 All of the formulations considered were discretized following the prescription in Section~\ref{Nystrom}. In all the numerical experiments we used meshes that rely on sigmoid transforms of polynomial degree 3. Also,  following the optimality prescriptions in~\cite{boubendirDDM}, we selected $\sigma_j=k_j^{1/3}$ in the definition of the complex wavenumbers that enter the definition of the corresponding tranmsission operators. Unless specified otherwise, in all the numerical experiments we present numbers of GMRES iterations for various solvers to reach a relative residual of $10^{-4}$ and present errors in the far-field for $1024$ equi-spaced far-field directions. In all the numerical results presented, the reference solutions were computed using highly refined discretizations of CFIESK solvers. We start in Table~\ref{comp3} with an illustration of the accuracy of the Nystr\"om discretizations of the CFIESK and various DDM formulations of the transmission problem~\eqref{system_t} that used conforming meshes, that is $N_0=N_1$. We note that the CFIESK and DDM with transmission operators $Z_j$ and $Z_j^{PS}$ exhibit iterative behaviors corresponding to second kind formulations, while the DDM with transmission operators $Z_j^a$ behave like first kind formulations. Also, the solvers based on CFIESK formulations are more accurate than the DDM solvers, and the accuracy of the latter formulations is virtually independent of the choice of transmission operators. 

\begin{table}
   \begin{center}
     \resizebox{!}{1.2cm}
{   
\begin{tabular}{|c|c|c|c|c|c|c|c|c|}
\hline
Unknowns & \multicolumn{2}{c|} {CFIESK} & \multicolumn{2}{c|} {DDM $Z_j,j=0,1$} &  \multicolumn{2}{c|} {DDM $Z_j^{PS},j=0,1$} &\multicolumn{2}{c|} {DDM $Z_j^a,j=0,1$}\\
\cline{2-9}
& It & $\varepsilon_\infty$ &It & $\varepsilon_\infty$ & It & $\varepsilon_\infty$ & It & $\varepsilon_\infty$ \\
\hline
72 & 51 & 9.2 $\times$ $10^{-4}$ & 26 & 4.3 $\times$ $10^{-3}$ & 30 & 4.3 $\times$ $10^{-3}$ & 54 & 4.3 $\times$ $10^{-3}$\\
144 & 51 & 5.6 $\times$ $10^{-6}$ & 26 &  3.4 $\times$ $10^{-4}$ & 30 & 3.4 $\times$ $10^{-4}$ & 66 & 3.4 $\times$ $10^{-4}$\\
288 & 51 & 3.9 $\times$ $10^{-7}$ & 26 & 3.9 $\times$ $10^{-5}$ & 30 & 3.9 $\times$ $10^{-5}$ & 74 & 3.9 $\times$ $10^{-5}$\\
572 & 51 & 2.5 $\times$ $10^{-8}$ & 25 & 4.1 $\times$ $10^{-6}$ & 30 & 4.1 $\times$ $10^{-6}$ & 87 & 4.1 $\times$ $10^{-6}$\\
1144 & 51 & 1.6 $\times$ $10^{-9}$ & 25 & 2.6 $\times$ $10^{-7}$ & 30 & 2.6 $\times$ $10^{-7}$ & 104 & 2.6 $\times$ $10^{-7}$\\
\hline
\end{tabular}
}
\caption{Far-field errors $\varepsilon_\infty$ computed using various formulations considered in this text in the case of scattering from an L-shaped domain with $\omega=2$, $\varepsilon_0=1$, and $\varepsilon_1=4$ with $\alpha_j=1, j=0,1$. We considered a GMRES residual of $10^{-12}$ in all the tests presented in the Table. CFIESK formulations uses twice as many unknowns as the DDM formulations.\label{comp3}}
\end{center}
 \end{table}
 
We present in Tables~\ref{comp4} and~\ref{comp5} the behavior of the various formulations for the transmission problem~\eqref{system_t} as a function of frequency in the case of high-contrast material properties, that is $\varepsilon_0=1$ and $\varepsilon_1=16$ and two scatterers: a square of size 4 in Table~\ref{comp4} and an L-shaped domain of size 4 in Table~\ref{comp5}. We used conforming meshes, i.e. $N_0=N_1$ for the DDM solvers. As it can be seen from the results in Tables~\ref{comp4} and~\ref{comp5}, the numbers of iterations required by the DDM solvers with transmission operators $Z_j$, $j=0,1$ are small and depend very mildly on the increasing frequency. Also, the iterative behavior of the DDM based on the transmission operators $Z_j^{PS}$, $j=0,1$, deteriorates somewhat with respect to that of DDM solvers with transmission operators $Z_j$, $j=0,1$. In contrast, the iterative behavior of DDM based on the simplest transmission operators $Z_j^a,j=0,1$ is quite poor in the high-frequency, high-contrast case. 
   
  \begin{table}
   \begin{center}
     \resizebox{!}{1.2cm}
{   
\begin{tabular}{|c|c|c|c|c|c|c|c|c|}
\hline
$\omega$ & \multicolumn{2}{c|} {CFIESK} & \multicolumn{2}{c|} {DDM $Z_j,j=0,1$} &  \multicolumn{2}{c|} {DDM $Z_j^{PS},j=0,1$} &\multicolumn{2}{c|} {DDM $Z_j^a,j=0,1$}\\
\cline{2-9}
& It & $\varepsilon_\infty$ &It & $\varepsilon_\infty$ & It & $\varepsilon_\infty$ & It & $\varepsilon_\infty$ \\
\hline
1 & 24 & 3.1 $\times$ $10^{-4}$ & 10 & 5.2 $\times$ $10^{-3}$ & 10 & 5.1 $\times$ $10^{-3}$ & 20 & 5.0 $\times$ $10^{-3}$\\
2 & 39 & 8.2 $\times$ $10^{-4}$ & 11 &  1.0 $\times$ $10^{-3}$ & 12 & 9.9 $\times$ $10^{-4}$ & 28 & 1.1 $\times$ $10^{-3}$\\
4 & 93 & 2.3 $\times$ $10^{-3}$ &  12 & 1.2 $\times$ $10^{-3}$ & 17 & 1.4 $\times$ $10^{-3}$ & 46 & 1.3 $\times$ $10^{-3}$\\
8 & 162 & 6.3 $\times$ $10^{-3}$ & 10 & 2.1 $\times$ $10^{-3}$ & 19 & 2.2 $\times$ $10^{-3}$ & 84 & 2.1 $\times$ $10^{-3}$\\
16 & 333 & 7.6 $\times$ $10^{-3}$ & 11 & 4.5 $\times$ $10^{-3}$ & 29 & 4.2 $\times$ $10^{-3}$ & 151 & 4.1 $\times$ $10^{-3}$\\
32 & 565 & 1.2 $\times$ $10^{-2}$ & 13  & 2.9 $\times$ $10^{-3}$ &  56 & 2.8 $\times$ $10^{-3}$ & 253 & 2.9 $\times$ $10^{-3}$\\
\hline
\end{tabular}
}
\caption{Far-field errors $\varepsilon_\infty$ computed using various formulations considered in this text in the case of scattering from a square of size 4 with $\varepsilon_0=1$ and $\varepsilon_1=16$ with $\alpha_j=1, j=0,1$. The DDM discretization used conforming meshes, that is $N_0=N_1$, and $64, 128, 256, 512, 1024$ and respectively $2048$ unknonws (these are the values of $N_0$); CFIESK formulations used twice as many unknowns. The numbers of iterations required by the DDM solvers with transmission operators $Z_j$, $j=0,1$, were 13, 15, 14, 19, 23, and respectively 31 in the case when $\alpha_j=\varepsilon_j^{-1}, j=0,1$.\label{comp4}}
\end{center}
 \end{table}

\begin{table}
   \begin{center}
     \resizebox{!}{1.2cm}
{   
\begin{tabular}{|c|c|c|c|c|c|c|c|c|}
\hline
$\omega$ & \multicolumn{2}{c|} {CFIESK} & \multicolumn{2}{c|} {DDM $Z_j,j=0,1$} &  \multicolumn{2}{c|} {DDM $Z_j^{PS},j=0,1$} &\multicolumn{2}{c|} {DDM $Z_j^a,j=0,1$}\\
\cline{2-9}
& It & $\varepsilon_\infty$ &It & $\varepsilon_\infty$ & It & $\varepsilon_\infty$ & It & $\varepsilon_\infty$ \\
\hline
1 & 43 & 1.0 $\times$ $10^{-3}$ & 15 & 4.7 $\times$ $10^{-3}$ & 16 & 4.6 $\times$ $10^{-3}$ & 31 & 4.6 $\times$ $10^{-3}$\\
2 & 72 & 1.1 $\times$ $10^{-3}$ & 15 &  9.0 $\times$ $10^{-4}$ & 17 & 1.2 $\times$ $10^{-3}$ & 46 & 8.3 $\times$ $10^{-4}$\\
4 & 135 & 2.1 $\times$ $10^{-3}$ & 16 & 2.4 $\times$ $10^{-3}$ & 24 & 2.4 $\times$ $10^{-3}$ & 81 & 2.3 $\times$ $10^{-3}$\\
8 & 208 & 2.4 $\times$ $10^{-3}$ & 15 & 4.0 $\times$ $10^{-3}$ & 29 & 4.0 $\times$ $10^{-3}$ & 112 & 4.1 $\times$ $10^{-3}$\\
16 & 493 & 8.8 $\times$ $10^{-3}$ & 21 & 8.1 $\times$ $10^{-3}$ & 56 & 8.1 $\times$ $10^{-3}$ & 276 & 8.0 $\times$ $10^{-3}$\\
32 & 887 & 1.2 $\times$ $10^{-2}$ & 22 & 9.6 $\times$ $10^{-3}$ &  87 & 9.6 $\times$ $10^{-3}$ & 488 & 9.6 $\times$ $10^{-3}$\\
\hline
\end{tabular}
}
\caption{Far-field errors computed using various formulations considered in this text in the case of scattering from a L-shaped domain of size 4 with $\varepsilon_0=1$ and $\varepsilon_1=16$ with $\alpha_j=1, j=0,1$. The DDM discretization used conforming meshes, that is $N_0=N_1$ and $64, 128, 256, 512, 1024$ and respectively $2048$ unknonws (these are the values of $N_0$); CFIESK formulations used twice as many unknowns. The numbers of iterations required by the DDM solvers with transmission operators $Z_j,j=0,1$ were 21, 23, 21, 23, 29, and respectively 37 in the case when $\alpha_j=\varepsilon_j^{-1}, j=0,1$.\label{comp5}}
\end{center}
\end{table}

The superior iterative performance of the DDM formulations that rely on transmission operators $Z_j,j=0,1$ can be inferred from the clustering of the eigenvalues of the iteration operator $I-\mathcal{S}^1\mathcal{S}^0$ around one. We present in Figure~\ref{fig:eig} the remarkable eigenvalue clustering in the case of the L-shaped scatterer for high-frequencies. It is important to note from the evidence presented in Figure~\ref{fig:eig} that although the eigenvalues of the iteration operator $I-\mathcal{S}^1\mathcal{S}^0$ corresponding to high-frequency eigenmodes are tighlty clustered around one, the operator $\mathcal{S}^1\mathcal{S}^0$ is not a contraction. 

\begin{figure}
\centering
\includegraphics[height=60mm]{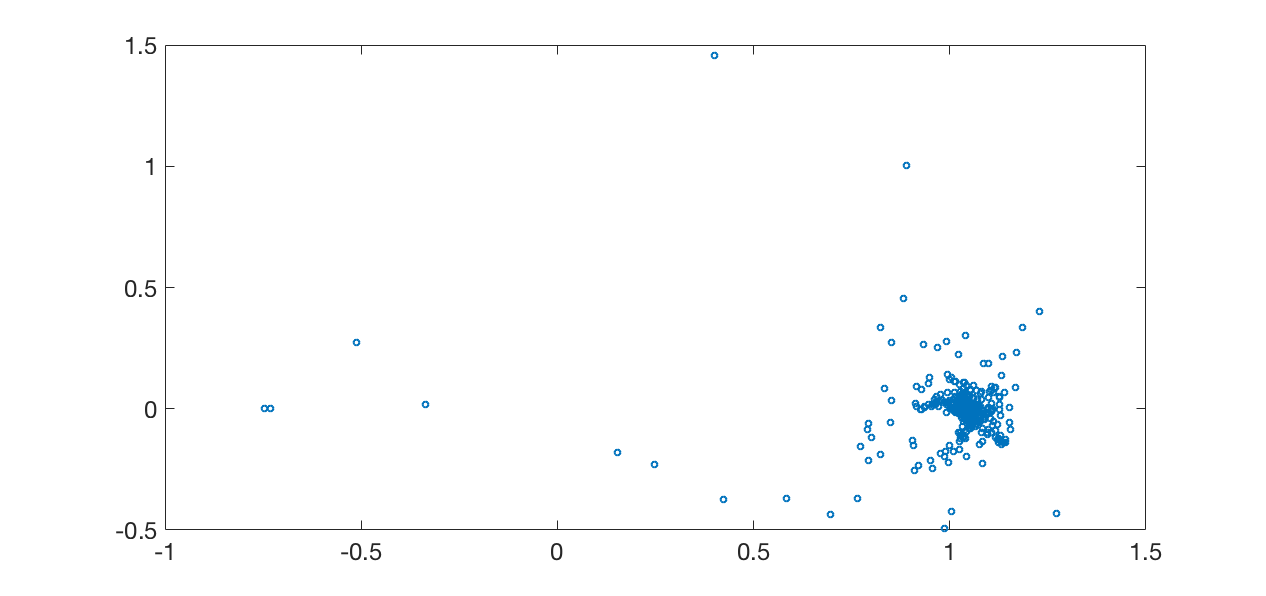}\\
\includegraphics[height=60mm]{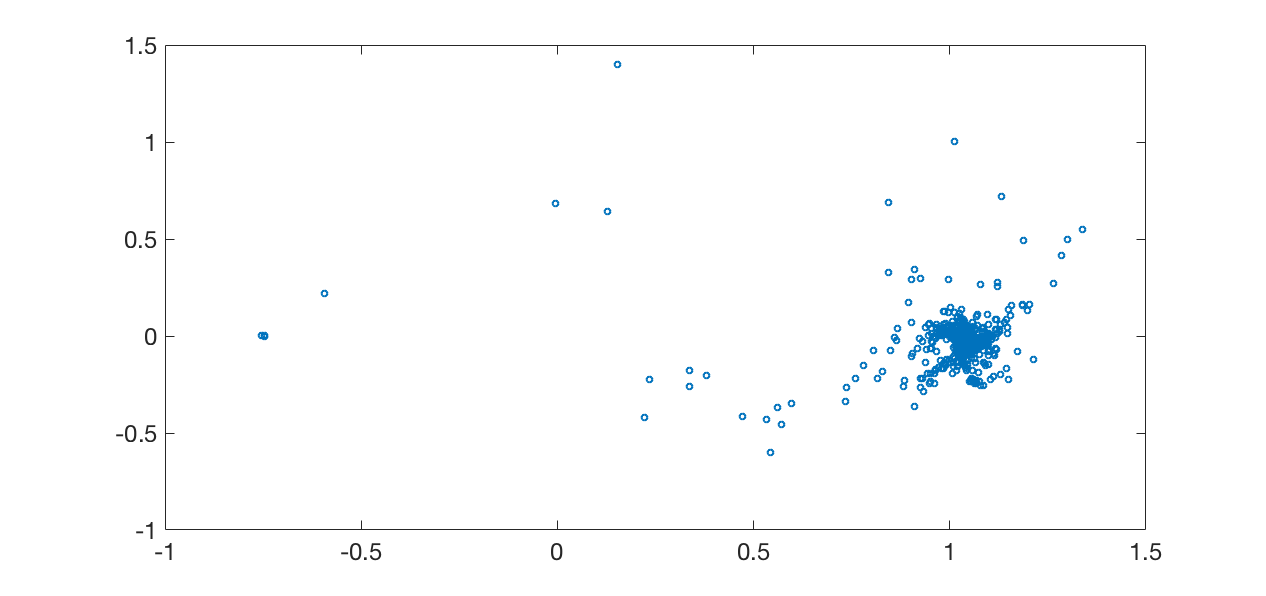}
\caption{Eigenvalue distributions of the DDM iteration operator $I-\mathcal{S}^1\mathcal{S}^0$ with the choice of transmission operators $Z_j,j=0,1$ for the L-shaped scatterer and high-contrast transmission problems with $\varepsilon_0=1$, $\varepsilon_1=16$, $\alpha_j=1, j=0,1$ and $\omega=16$ (top) and $\omega=32$ (bottom).}
\label{fig:eig}
\end{figure}

Clearly, in the case of high-frequency, high-contrast transmission problems, DDM that use conforming meshes are not the most advantageous computationally. Rather, the use of non-conforming meshes that resolve the wavenumber corresponding to each subdomain are more favorable. We present in Table~\ref{comp51} results corresponding to use of non-conforming meshes in the DDM with transmission operators $Z_j$, $j=0,1$. We note that the iterative behavior of the non-conforming DDM is very similar to that of conforming DDM, without major compromise on accuracy.   

\begin{table}
   \begin{center}
     \resizebox{!}{1.0cm}
{   
\begin{tabular}{|c|c|c|c|c|c|c|c|c|c|c|c|c|c|}
\hline
$\omega$ & \multicolumn{3}{c|} {DDM (1) $Z_j,j=0,1$ Square } & \multicolumn{3}{c|} {DDM (2) $Z_j,j=0,1$ Square } &  \multicolumn{3}{c|} {DDM (1) $Z_j,j=0,1$ L-shape} &\multicolumn{3}{c|} {DDM (1) $Z_j,j=0,1$ L-shape}\\
\cline{2-13}
& $N_0=N_1$ & It & $\varepsilon_\infty$ & $N_0$ & It & $\varepsilon_\infty$ & $N_0=N_1$ & It & $\varepsilon_\infty$ & $N_0$ & It & $\varepsilon_\infty$ \\
\hline
4 & 256 & 10 & 1.2 $\times$ $10^{-3}$ & 192 & 10 & 1.2 $\times$ $10^{-3}$ & 256 & 16 & 2.4 $\times$ $10^{-3}$ & 192 & 14 & 6.0 $\times$ $10^{-3}$\\
8 & 512 & 10 & 2.1 $\times$ $10^{-3}$ & 384 & 14 & 6.1 $\times$ $10^{-3}$ & 512 & 15 & 4.0 $\times$ $10^{-3}$ & 384 & 12 & 3.1 $\times$ $10^{-3}$\\
16 & 1024 & 11 & 4.5 $\times$ $10^{-3}$ & 768 & 16 & 6.7 $\times$ $10^{-3}$ & 1024 & 21 & 8.1 $\times$ $10^{-3}$ & 768 & 22 & 1.2 $\times$ $10^{-2}$\\
32 & 2048 & 13 & 2.9 $\times$ $10^{-3}$ &  1536 & 15 & 4.9 $\times$ $10^{-3}$ & 2048 & 22 & 9.6 $\times$ $10^{-3}$ & 1536 & 27 & 1.3 $\times$ $10^{-2}$\\
\hline
\end{tabular}
}
\caption{Comparison between the conforming ($N_0=N_1$) and non-conforming ($N_0<N_1$) DDM with transmission operators $Z_j,j=0,1$ for  high-contrast transmission problems with $\varepsilon_0=1$ and $\varepsilon_1=16$ with $\alpha_j=1, j=0,1$. In the non-conforming case, the values of $N_1$ are equal to those in the conforming case for the same frequency.\label{comp51}}
\end{center}
\end{table}

The use of optimized transmission operators $Z_j$ and $Z_j^{PS}$ for $j=0,1$ gives rise to superior DDM iterative performance. However, given that the transmission operators $Z_j$, $j=0,1$, and $Z_j^{PS}$, $j=0,1$ are non-local operators, their implementation favors boundary integral equation solvers, while posing challenges to finite difference/finite element discretizations. Therefore, approximations of the square root Fourier multiplier operators $Z_j^{PS}$ more amenable to the latter types of discretizations were proposed in the literature. There are two classes of such approximations that were widely used: local second order approximations with optimized coefficients~\cite{Gander1} and Pad\'e approximations. Reference~\cite{boubendirDDM} provides numerical evidence that the incorporation of Pade\'e approximations of square root operators results in DDM with faster rates of convergence than the use of local second order approximations. In what follows, we explain briefly the Pad\'e approximations used in~\cite{boubendirDDM}; we start from formulas
\[
\sqrt{1+X}\approx e^{i\theta/2}R_p(e^{-i\theta}X)=A_0+\sum_{j=1}^p\frac{A_jX}{1+B_jX}
\]
where the complex numbers $A_0$, $A_j$ and $B_j$ are given by
\[
A_0=e^{i\theta/2}R_p(e^{-i\theta}-1),\quad A_j=\frac{e^{-i\theta/2}a_j}{(1+b_j(e^{-i\theta}-1))^2},\quad B_j=\frac{e^{-i\theta}b_j}{1+b_j(e^{-i\theta}-1)}
\]
and
\[
R_p(z)=1+\sum_{j=1}^p\frac{a_j z}{1+b_j z}
\]
with
\[
a_j=\frac{2}{2p+1}\sin^2(\frac{j\pi}{2p+1})\qquad b_j=\cos^2(\frac{j\pi}{2p+1}).
\]
These Pad\'e approximations of square roots above give rise to the following transmission operators
\begin{equation}\label{eq:Pade}
  Z_j^{Pade,p}=-\frac{i}{2}(k_j+i\sigma_j)\left(A_0I-\sum_{j=1}^pA_j\left(\frac{\partial_s^2}{(k_j+i\sigma_j)^2}\right)\left(I-B_j\left(\frac{\partial_s^2}{(k_j+i\sigma_j)^2}\right)\right)^{-1}\right),
\end{equation}
where $\partial_s$ is the tangential derivative on $\Gamma$. We note that the discretizations of the operators $Z_j^{Pade,p},j=0,1$ defined in equation~\eqref{eq:Pade} is relatively straightforward using trigonometirc interpolants. However, their discretization requires $p$ matrix inverses per wavenumber. We present in Figure~\ref{fig:pade} a comparison between the DDM iterations as a function of the Pad\'e parameter $p$ in the case of a L-shaped scatterer and the same material parameters as those in Table~\ref{comp4}. For the configuration presented in  Figure~\ref{fig:pade}, we have found in practice that the value $p=16$ leads to optimal iterative behavior of the DDM, but this behavior is sensitive to the values of $p$ in the high-frequency regime. Albeit smaller values of the Pad\'e parameter $p$ require less expensive evaluations of the transmission operators $Z_j^{Pade,p},j=0,1$, they lead to larger numbers of DDM iterations in the high-frequency regime. 
\begin{figure}
\centering
\includegraphics[height=60mm]{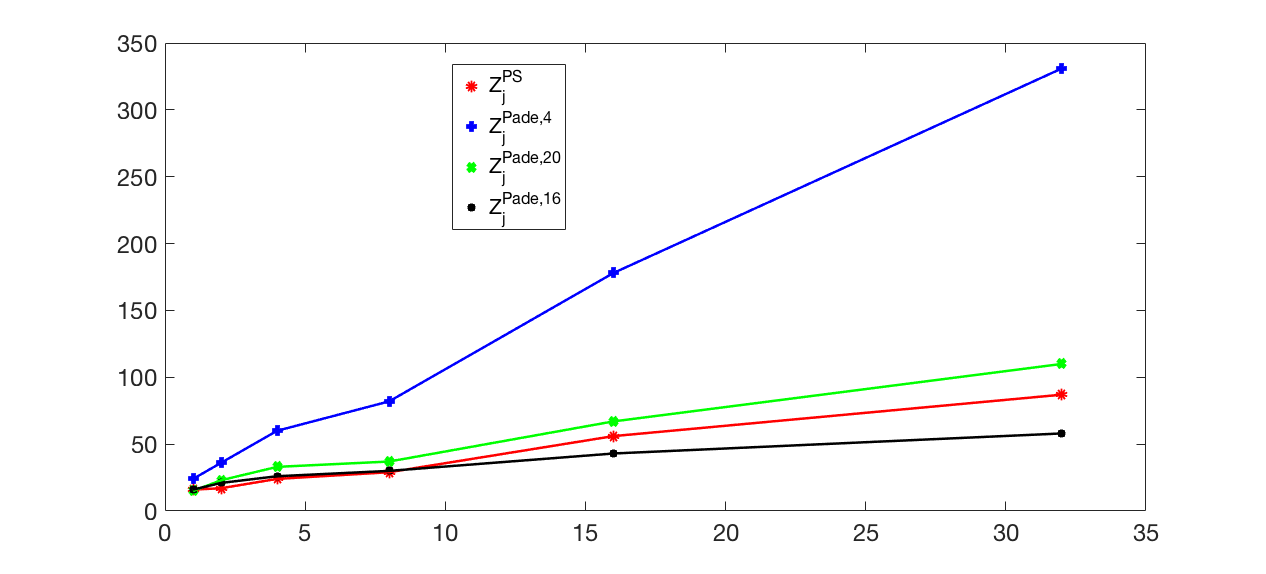}
\caption{The numbers of iterations required by the DDM solvers with transmission operators $Z_j^{PS},j=0,1$ as well as Pad\'e approximations $Z_j^{Pade,p}, j=0,1$ for various values of $p$, square scatterer and the same material parameters as those in Table~\ref{comp4}.}
\label{fig:pade}
\end{figure}

As it can be seen from the results in Tables~\ref{comp4} and~\ref{comp5}, the DDM solvers based on optimized transmission operators $Z_j$ and $Z_j^{PS}$ exhibit superior iterative Krylov subspace performance. Nevertheless, DDM formulations rely on discretization of RtR operators $\mathcal{S}^j$, which, in turn, require matrix inversions. As the frequency increases, the size of the matrices that need be inverted grows commensurably; furthermore, for three dimensional applications, the numbers of unknowns quoted in Tables~\ref{comp4} and~\ref{comp5} ought to be squared for the same acoustical/electrical size of domains. Clearly, a straightforward use of direct linear algebra solvers for computations of RtR operators is not possible in the high frequency regime. Therefore, we turn our attention in Tables~\ref{comp6} and~\ref{comp7} to the numbers of iterations required for computation of $\mathcal{S}^j$ corresponding to the transmission operators $Z_j,j=0,1$ based on the three formulations discussed in this text. Specifically, we used (1) interior/exterior formulations that require inversion of the operators $\mathcal{A}_j,j=0,1$ defined in equation~\eqref{eq:A_comp}; (2) interior/exterior formulations that require inversion of the operators $\mathcal{B}_j,j=0,1$ defined in equation~\eqref{eq:int_D}; and (3) interior formulations that require inversion of the operators $\mathcal{C}_1$ defined in equation~\eqref{eq:Steinbach} and exterior formulations that require inversion of the operators $\mathcal{C}_0$ defined in equation~\eqref{eq:Steinbach0}. Although there is no theory in place for the well-posedness of boundary integral equations that involve inversion of the operators $\mathcal{B}_0$ defined in equation~\eqref{eq:int_D}, our numerical experiments suggest that it is possible to invert discretizations of those operators. As it can be seen from the results presented in Tables~\ref{comp6} and~\ref{comp7}, while the numbers of iterations required to solve exterior impedance problems do not increase significantly with frequency provided that carefully defined formulations $\mathcal{A}_0$~\eqref{eq:A_comp} are used, this is no longer the case for interior impedance problems, regardless of formulation used. Similar scenarios occur for the other choices of transmission operators discussed in this text. As it can be seen from the results in  Tables~\ref{comp6} and~\ref{comp7}, the numbers of iterations required for the computation of the interior RtR map $\mathcal{S}^1$ cannot be controlled as the frequency increases, regardless of the use of any of the three BIE formulations considered in this text. We submit that this is related to the fact that easily computable approximations of DtN maps for interior domains (even when properly defined) are simply not available for high-frequencies.   

\begin{table}
  \begin{center}
    \resizebox{!}{1.6cm}
{   
\begin{tabular}{|c|c|c|c|c|c|c|c|}
\hline
$k_0$ & \multicolumn{3}{c|} {$\Omega_0$} & $k_1$ & \multicolumn{3}{c|} {$\Omega_1$}\\
\cline{2-4}\cline{6-8}
& $\mathcal{A}_0$~\eqref{eq:A_comp} & $\mathcal{B}_0$~\eqref{eq:int_D} & $\mathcal{C}_0$~\eqref{eq:Steinbach0} & & $\mathcal{A}_1$~\eqref{eq:A_comp} & $\mathcal{B}_1$~\eqref{eq:int_D} & $\mathcal{C}_1$~\eqref{eq:Steinbach} \\
\hline
1 & 13 & 16 & 37 & 4 & 18 & 21 & 49\\
2 & 17 & 21 & 49 & 8 & 26 & 29 & 70\\
4 & 24 & 36 & 84 & 16 & 51 & 56 & 131\\
8 & 31 & 49 & 104 & 32 & 83 & 79 & 217\\
16 & 35 & 75 & 143 & 64 & 170 & 142 & 431\\
32 & 42 & 125 & 228 & 128 & 263 & 214 & 793\\
\hline
\end{tabular}
}
\caption{ Numbers of iterations required for the calculation of the RtR operators $\mathcal{S}^j,j=0,1$ corresponding to the transmission operators $Z_j,j=0,1$ in the case of the square scatterer $\Omega_1$ using various boundary integral equation formulations discussed in this text .\label{comp6}}
\end{center}
 \end{table}

\begin{table}
  \begin{center}
    \resizebox{!}{1.6cm}
{   
\begin{tabular}{|c|c|c|c|c|c|c|c|}
\hline
$k_0$ & \multicolumn{3}{c|} {$\Omega_0$} & $k_1$ & \multicolumn{3}{c|} {$\Omega_1$}\\
\cline{2-4}\cline{6-8}
& $\mathcal{A}_0$~\eqref{eq:A_comp} & $\mathcal{B}_0$~\eqref{eq:int_D} & $\mathcal{C}_0$~\eqref{eq:Steinbach0} & & $\mathcal{A}_1$~\eqref{eq:A_comp} & $\mathcal{B}_1$~\eqref{eq:int_D} & $\mathcal{C}_1$~\eqref{eq:Steinbach} \\
\hline
1 & 17 & 22 & 44 & 4 & 24 & 26 & 67\\
2 & 22 & 27 & 58 & 8 & 38 & 42 & 92\\
4 & 31 & 39 & 80 & 16 & 66 & 65 & 160\\
8 & 34 & 63 & 131 & 32 & 106 & 94 & 247\\
16 & 38 & 104 & 188 & 64 & 218 & 195 & 473\\
32 & 45 & 168 & 309 & 128 & 405 & 333 & 890\\
\hline
\end{tabular}
}
\caption{Numbers of iterations required for the calculation of the RtR operators $\mathcal{S}^j,j=0,1$ corresponding to the transmission operators $Z_j,j=0,1$ in the case of the L-shaped scatterer $\Omega_1$ using various boundary integral equation formulations discussed in this text.\label{comp7}}
\end{center}
 \end{table}

Given the large computational costs required to compute the RtR operators $\mathcal{S}^1$ at high frequencies, it is preferrable that the interior domain $\Omega_1$ is decomposed in smaller non-overlapping subdomains giving rise to DDM formulations~\eqref{DDMnew}, in which case direct solvers such as LU can be used for the calculation of all the RtR maps required.  However, as shown in Figure~\ref{fig:iter_circle_subdomains}, the numbers of iterations grow considerally with the number of subdomains, albeit the computation of RtR maps becomes much more efficient since the electric size of interior subdomains has been decreased. This increase of number of iterations as the number of subdomains increases and the adjacency graph becomes more complex can be attributed to the global communication flow between subdomains, regardless of choice of transmission operators. This increase is more dramatic for the transmission operators $Z_j$ and $Z_j^{PS}$, and less so for the transmission operators $Z_j^a$. Still, the numbers of iterations required by the DDM with interior domain subdivisions~\eqref{DDMnew} and transmission operators $Z_j$ reported in Figure~\ref{fig:iter_circle_subdomains} are smaller than those corresponding to transmission operators $Z_j^{PS}$ and $Z_j^a$. Thus, even though the exchange of information between adjacent subdomains can be optimized, the number of DDM iterations does not scale with the number of subdomains, and preconditioners are needed to stabilize this phenomenon. The design of effective preconditioners for the DDM~\eqref{DDMnew} for Helmholtz equation with large numbers of subdomains that control the global interdomain communication is an active area of research. The most promising directions are (a) the use of coarse grid preconditioners~\cite{stolk2013rapidly,conen2014coarse} and (b) the use of  sweeping preconditioners~\cite{vion2014double}. The incorporation of these preconditioning strategies in the case of DDM~\eqref{DDMnew} is subject of ongoing investigation.

\begin{figure}
\centering
\includegraphics[height=60mm]{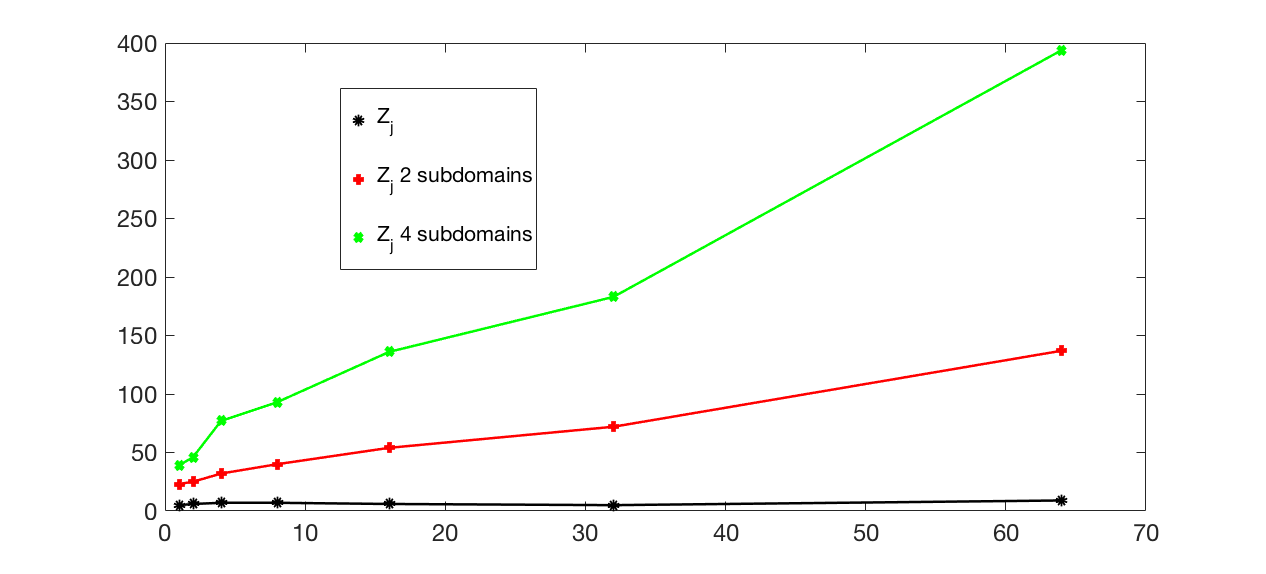}
\caption{The numbers of iterations required by the DDM solvers with transmission operators $Z_j,j=0,1$ in the case when the interior domain $\Omega_1$ is a circle of radius one that is divided into (a) two interior half-circle subdomains $\Omega_1=\Omega_{11}\cup\Omega_{12}$, and (b) four quater-circle interior subdomains $\Omega_1=\cup_{j=1}^4 \Omega_{1j}$. We used $\varepsilon_0=1$, $\varepsilon_1=16$, $\alpha_j=1,j=0,1$, and $\omega=1,2,4,8,16,32,64$. In the case when the L-shaped subdomain of size 4 is divided into three subdomains as depicted in Figure~\ref{fig:subdiv1}, the number of iterations of the DDM algorithm~\eqref{DDMnew} are (i) 58, 72, 106, 171, 306, and respectively 447 for transmission operators $Z_j$, (ii) 65, 83, 118, 179, 330, and respectively 509 for transmission operators $Z_j^{PS}$, and (iii) 103, 133, 179, 286, 533, and respectively 740 for transmission operators $Z_j^a$, for frequencies $\omega=1,2,4,8,16,32$ and material properties described in Table~\ref{comp5}. }
\label{fig:iter_circle_subdomains}
\end{figure}

We conclude with numerical experiments concerning transmission problems with partially coated boundaries. Specifically, we present in Table~\ref{comp10} numbers of iterations required by the CFIESK formulation~\eqref{eq:sk_system} and DDM with various transmission operators considered in this text. The domain $\Omega_1$ is a circle of radius one whose lower semicircle is coated. In this case, the reductions in numbers of iterations that can be garnered from use of DDM over the use of CFIESK is more pronounced. Finally, we plot in Figure~\ref{fig:near_fields} fields scattered by a penetrable scatterer whose boundary is partially coated under plane wave incidence of various directions and frequencies.

\begin{table}
   \begin{center}
     \resizebox{!}{1.2cm}
{   
\begin{tabular}{|c|c|c|c|c|c|c|c|c|}
\hline
$\omega$ & \multicolumn{2}{c|} {CFIESK} & \multicolumn{2}{c|} {DDM $Z_j,j=0,1$} &  \multicolumn{2}{c|} {DDM $Z_j^{PS},j=0,1$} &\multicolumn{2}{c|} {DDM $Z_j^a,j=0,1$}\\
\cline{2-9}
& It & $\varepsilon_\infty$ &It & $\varepsilon_\infty$ & It & $\varepsilon_\infty$ & It & $\varepsilon_\infty$ \\
\hline
1 & 85 & 4.7 $\times$ $10^{-3}$ & 13 & 6.2 $\times$ $10^{-3}$ & 12 & 6.2 $\times$ $10^{-3}$ & 21 & 6.3 $\times$ $10^{-3}$\\
2 & 165 & 5.2 $\times$ $10^{-3}$ & 17 &  6.8 $\times$ $10^{-3}$ & 15 & 6.9 $\times$ $10^{-3}$ & 34 & 6.8 $\times$ $10^{-3}$\\
4 & 315 & 5.8 $\times$ $10^{-3}$ &  17 & 7.4 $\times$ $10^{-3}$ & 18 & 7.4 $\times$ $10^{-3}$ & 37 & 7.3 $\times$ $10^{-3}$\\
8 & 617 & 6.1 $\times$ $10^{-3}$ & 19 & 7.6 $\times$ $10^{-3}$ & 23 & 7.6 $\times$ $10^{-3}$ & 52 & 7.5 $\times$ $10^{-3}$\\
16 & 1225 & 6.8 $\times$ $10^{-3}$ & 21 & 7.8 $\times$ $10^{-3}$ & 29 & 7.8 $\times$ $10^{-3}$ & 118 & 7.9 $\times$ $10^{-3}$\\
32 & 2271 & 7.2 $\times$ $10^{-3}$ & 23  & 8.5 $\times$ $10^{-3}$ & 44 & 8.5 $\times$ $10^{-3}$ & 265 & 8.4 $\times$ $10^{-3}$\\
\hline
\end{tabular}
}
\caption{Far-field errors computed using various formulations considered in this text in the case of scattering from a circle of radius one with $\varepsilon_0=1$ and $\varepsilon_1=16$ with $\alpha_j=1, j=0,1$, and the lower semi-circle is PEC. The DDM discretization used $64, 128, 256, 512, 1024$ and respectively $2048$ unknonws; CFIESK formulations used twice as many unknowns. In the case when the domain $\Omega_1$ is further subdivided into two subdomains $\Omega_{11}$ and $\Omega_{12}$ the numbers of DDM iterations are (i) $24, 33, 39, 56, 95, 173$ for transmission operators $Z_j$, (ii) $22, 31, 43, 69, 135, 256$ for transmission operators $Z_j^{PS}$, and (iii) $34, 63,73,125,251,529$ for transmission operators $Z_j^a$ for the same frequencies and material parameters.\label{comp10}}
\end{center}
\end{table}

\begin{figure}
\centering
\includegraphics[height=60mm]{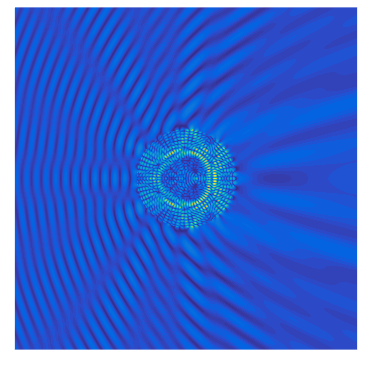}\includegraphics[height=60mm]{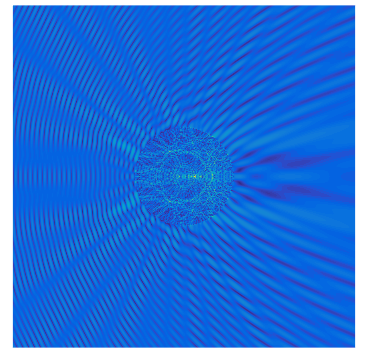}\\
\includegraphics[height=60mm]{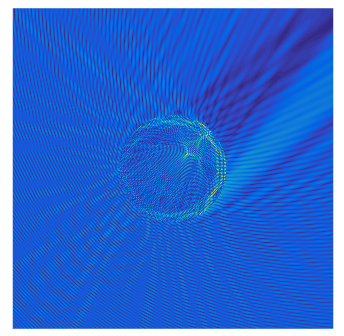}\includegraphics[height=60mm]{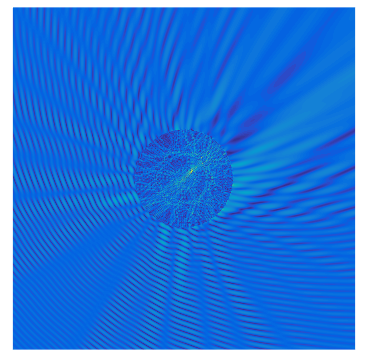}
\caption{Fields scattered by a circular structure filled with a material with $\varepsilon_1=16$ and PEC lower semicircle in the case of $\omega=k_0=16$ (top left), $\omega=k_0=32$ (top and bottom right), and $\omega=k_0=64$ (bottom left) and various plane wave incident fields.}
\label{fig:near_fields}
\end{figure}

\section{Conclusions}\label{conclu}

We presented analysis and numerical experiments concerning DDM based on quasi-optimal transmission operators for the solution of Helmholtz transmission problems in two dimensions. The quasi-optimal transmission operators that we used are readily computable approximations of DtN operators. Under certain assumptions on the regularity of the of the (closed) boundary of material discontinuity we established the well posedness of the DDM with the transmission operators considered. We provided ample numerical evidence that the incorporation of quasi-optimal transmission operators within DDM gives rise to small numbers of Krylov subspace iterations for convergence that depend very mildly on the frequency or contrast. However, the numers of iterations do not scale with the number of subdomains involved in the DDM. Extensions to three-dimensional configurations are currently underway. 

\section*{Acknowledgments}
Yassine Boubendir gratefully acknowledges support from NSF through contracts DMS-1720014. Catalin Turc gratefully acknowledges support from NSF through contracts DMS-1614270. Carlos Jerez-Hanckes thanks partial support from Conicyt Anillo ACT1417 and Fondecyt Regular 1171491.

\bibliography{biblioTJ}

\end{document}